\theoremstyle{plain}
\newtheorem{lma}{Lemma}[section]
\crefname{lma}{Lemma}{Lemmata}
\newtheorem{thm}[lma]{Theorem}
\crefname{thm}{Theorem}{Theorems}
\newtheorem{cor}[lma]{Corollary}
\crefname{cor}{Corollary}{Corollaries}
\newtheorem{prp}[lma]{Proposition}
\crefname{prp}{Proposition}{Propositions}
\theoremstyle{definition}
\newtheorem{pgr}[lma]{}
\crefname{pgr}{Paragraph}{Paragraphs}
\newtheorem{dfn}[lma]{Definition}
\crefname{dfn}{Definition}{Definitions}
\theoremstyle{remark}
\newtheorem{rmk}[lma]{Remark}
\crefname{rmk}{Remark}{Remarks}
\newtheorem{exa}[lma]{Example}
\crefname{exa}{Example}{Examples}
\newtheorem{qst}[lma]{Question}
\crefname{qst}{Question}{Questions}
\crefname{ntn}{Notation}{Notations}
\newtheorem*{thm*}{Theorem}
\newtheorem*{qst*}{Question}
\newtheorem*{cor*}{Corollary}
\newcommand{\andSep}{\,\,\,\text{ and }\,\,\,}
\newcommand{\axiomO}[1]{(O#1)}
\newcommand{\CatCu}{\ensuremath{\mathrm{Cu}}}
\newcommand{\CuSgp}{$\CatCu$-sem\-i\-group}
\newcommand{\CuMor}{$\CatCu$-mor\-phism}
\newcommand{\soft}{{\rm{soft}}}
\def\today{\number\day\space\ifcase\month\or   January\or February\or
   March\or April\or May\or June\or   July\or August\or September\or
   October\or November\or December\fi\   \number\year}
\newcommand{\NN}{{\mathbb{N}}}
\newcommand{\QQ}{{\mathbb{Q}}}
\newcommand{\id}{{\mathrm{id}}}
\DeclareMathOperator{\Cu}{Cu}
\newcommand{\ca}{$\mathrm{C}^*$-algebra}
\newcommand{\stHom}{${}^*$-homomorphism}
\title{Pure ${}^*$-homomorphisms}
\date{\today}
\author{Joan Bosa, Eduard Vilalta}
\address{Joan Bosa,
Departamento de Matem\'{a}ticas,
Universidad de Zaragoza,
50009 Zaragoza, Zaragoza, Spain.}
\email{jbosa@unizar.es}
\urladdr{personal.unizar.es/jbosa/}
\address{Eduard Vilalta, 
	Department of Mathematical Sciences,
	Chalmers University of Technology and University of Gothenburg, 
	Chalmers Tvärgata 3, SE-412 96 Gothenburg, Sweden}
\email[]{vilalta@chalmers.se}
\urladdr{www.eduardvilalta.com}
\thanks{All authors were partially supported by MINECO (grant No.\ PID2020-113047GB-I00/AEI/10.13039/501100011033), and by the Comissionat per Universitats i Recerca de la Generalitat de Catalunya (grant No. 2021SGR01015). The first author was also partially supported by the Consolidación Investigadora grant (CNS2022-135340) provided by Ministerio de Ciencia, Innovación y Universidades (Gobierno de España). The second author was also supported by the Fields Institute for Research in Mathematical Sciences and by the Knut and Alice Wallenberg Foundation (KAW 2021.0140)}
\subjclass[2020]%
{Primary
46L05; % General theory of C*-algebras
Secondary
19K14, % $ K_0$ as an ordered group, traces
46L80. % $K$-theory and operator algebras 
}
\date{\today}
\begin{document}

%==========================================================================================
\begin{abstract}
We introduce and study a notion of pureness for \stHom{s} and, more generally, for cpc. order-zero maps. After providing several examples of pureness, such as \textquoteleft{}$\mathcal{Z}$-stable\textquoteright{}-like maps, we focus on the question of when pure maps factor through a pure \ca{}.

We show that, up to Cuntz equivalence, any composition of two pure maps factors through a pure object. This is used to obtain several factorization results at the level of \ca{s}.

\end{abstract}
\maketitle

%==========================================================================================
\section{Introduction}
 The classification of unital separable simple nuclear non-elementary \ca{s} by their K-theory and tracial data, also known as the Elliott classification program, has been one of the major developments in the theory of operator algebras in the past 40 years. A landmark achievement, obtained as the collaborative effort of many hands and decades of work (see, among many others, \cite{EllGonLinNiuTA,TikWhiWin17QDNuclear,Win12NuclDimZstable}), states that any such \ca{} $A$ can be classified by the so-called Elliott invariant as long as $A$ is $\mathcal Z$-stable (i.e. it absorbs the Jiang-Su algebra tensorially) and satisfies the universal coefficient theorem (UCT). Some of the first results in the Elliott classification program relied heavily on the inductive limit presentation of the algebras under consideration; however, the modern approach to classification exploits conditions that are more abstract in nature. One such condition is Winter's notion of \emph{pure C*-algebras}, which were defined in \cite{Win12NuclDimZstable} as those algebras that are both almost divisible and almost unperforated. This notion is deeply connected to $\mathcal Z$-stability, as stated explicitly in the famous Toms-Winter conjecture: For unital separable simple nuclear non-elementary \ca{s}, the conditions of almost unperforation, $\mathcal Z$-stability and finite nuclear dimension should all coincide. The conjecture is by now largely a theorem \cite{CasEviTikWhiWin21NucDimSimple,BBSTWW,Ror04StableRealRankZ,Win12NuclDimZstable}, with the only remaining implication being if almost unperforation implies $\mathcal Z$-stability. If true, pureness and $\mathcal Z$-stability agree for all unital separable simple nuclear \ca{s}. However, it should be noted that $\mathcal Z$-stability and pureness do not agree in general, with the latter then becoming an important regularity property on its own right; see \cite{AntPerThiVil24}.
 
 The current approaches to the classification program classify \ca{s} by
classifying maps. In such results, strong conditions are only imposed either on the domain or codomain, while the assumptions on the other side tend to be milder. One important example of this phenomenon is Robert's classification of \stHom{s} from $1$-dimensional NCCW-complexes with trivial $K_1$-group (and their inductive limits) to stable rank one \ca{s} via the \emph{Cuntz semigroup} \cite{Rob12Class}. This semigroup is a rich invariant for \ca{s}, which plays a crucial role in  both Elliott's program and the Toms-Winter conjecture.  Robert's theorem has been an important tool in recent classification results, and can be regarded as an example where the result for \stHom{s} is much more powerful than its induced result for \ca{s}. A recent groundbreaking development along these lines can be found in \cite{CGSTW23ClassHom}, where unital embeddings from unital separable nuclear \ca{s} satisfying the UCT to unital simple separable nuclear $\mathcal{Z}$-stable \ca{s} are classified. Further, a current trend in such results is to move the conditions on the domain or codomain to the maps themselves. This can be seen, for example, in the definition of $\mathcal{O}_2$-stable \stHom{s} \cite{Gabe2020ANewProofKirchberg}, in the study of $\mathcal{O}_\infty$-stable \stHom{s} run in \cite{BGSW22_infty}, in the introduction of real rank zero inclusions \cite{GabNea23RRZero}, and many others.

Inspired by this modern approach to classification ---as well as by its importance as a regularity property--- in this paper we generalize the notion of pureness to maps between \ca{s}. Extending the original definition, we say that a cpc. order-zero map (in particular, a \stHom{}) $\theta\colon A\to B$ is \emph{pure} if it is both almost unperforated and almost divisible, in their suitable versions (\cref{dfn:ZMultPure}).

%In particular, we provide the above definition in its more general framework, i.e. the category $\Cu$, and then we usually move back and forward from the category of C*-algebras to category $\Cu$ depending on the generality we want to get the desired results.

%No m'agrada parlar de la categoria abstracta a la introduccio

Our first examples of pureness arise from the study of \textquoteleft{}$\mathcal{Z}$-stable\textquoteright{}-like maps (a notion that we do not define).  Recall from \cite[Proposition 4.4]{Kir06CentralSeqPI} that a unital separable \ca{} $A$ is $\mathcal{Z}$-stable if and only if $\mathcal{Z}$ embeds unitally into $A_\omega \cap A'$ (for a free ultrafilter $\omega$), which is equivalent to $\mathcal{Z}$ embedding unitally to $A_\omega \cap A'\cap S'$ for any separable sub-\ca{} $S\subseteq A_\omega \cap A'$. In particular, the unit in $A_\omega \cap A'\cap S'$ is almost divisible for each $S$. In our setting, we get the following:

\begin{thm}[cf. \ref{prp:ZstabImpPure}]\label{thmIntro:1}
 Let $A$ be $\sigma$-unital, and let $\theta\colon A\to B$ be a \stHom{}. Assume that $1\in B_\omega\cap\theta (A)'/{\rm Ann}(\theta (A))$ is almost divisible. Then, $\theta$ is pure.
 
 Further, if $1\in B_\omega\cap(\theta (A)\cup S)'/{\rm Ann}(\theta (A)\cup S)$ is almost divisible for every separable sub-\ca{} $S\subseteq B_\omega\cap\theta (A)'$, there exists a pure sub-\ca{} $C\subseteq B_\omega$ such that $\theta (A)\subseteq C \subseteq B_\omega$.
\end{thm}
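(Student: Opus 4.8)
The plan is to prove the two parts separately: in the first part we verify directly the two conditions of \cref{dfn:ZMultPure} for $\theta$, and in the second part we bootstrap a diagonal construction of $C$ out of the first.

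Since $A$ is $\sigma$-unital, so is $\theta(A)$, and because $B_\omega$ is a sequence algebra we may, by a Kirchberg-type reindexing argument, realize classes from $B_\omega\cap\theta(A)'/\mathrm{Ann}(\theta(A))$ by honest positive contractions in $B_\omega\cap\theta(A)'$; this is the point where $\sigma$-unitality of $A$ will enter. By hypothesis $1\in B_\omega\cap\theta(A)'/\mathrm{Ann}(\theta(A))$ is almost divisible, so for each $k\in\NN$ there is a positive contraction $e_k\in B_\omega\cap\theta(A)'$ whose class $\langle e_k\rangle$ in the quotient satisfies $k\langle e_k\rangle\leq\langle 1\rangle\leq(k+1)\langle e_k\rangle$. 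Given $a\in M_\infty(A)_+$, the fact that $e_k$ commutes with $\theta(A)$ means that the element $c_{a,k}:=\theta(a)^{1/2}e_k\theta(a)^{1/2}$ (taken entrywise) satisfies $k\langle c_{a,k}\rangle\leq\langle\theta(a)\rangle\leq(k+1)\langle c_{a,k}\rangle$ in $\CatCu(B_\omega)$, and a last reindexing pushes $c_{a,k}$ into $M_\infty(B)$ should the definition require it; this is exactly almost divisibility of $\theta$. For almost unperforation, given $a,b\in M_\infty(A)_+$ with $(n+1)\langle\theta(a)\rangle\leq n\langle\theta(b)\rangle$, a now-standard argument going back to R\o{}rdam \cite{Ror04StableRealRankZ}, fed with the divisible elements $c_{a,k}$ produced above, cancels the factor $\frac{n+1}{n}$ and yields $\langle(\theta(a)-\varepsilon)_+\rangle\leq\langle\theta(b)\rangle$ for every $\varepsilon>0$, hence $\langle\theta(a)\rangle\leq\langle\theta(b)\rangle$. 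Thus $\theta$ is pure, which is the content of the first assertion (and of \cref{prp:ZstabImpPure}).

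For the second assertion we construct an increasing sequence of separable sub-\ca{s} $C_0\subseteq C_1\subseteq\cdots$ of $B_\omega$, each of the form $C_n=C^*(\theta(A)\cup S_n)$ with $S_n\subseteq B_\omega\cap\theta(A)'$ separable (start with $C_0=\theta(A)$, $S_0=0$). At stage $n$, fix a countable subset $\mathcal F_n\subseteq M_\infty(C_n)_+$ dense in each matrix amplification, together with a countable family of pairs from $\mathcal F_n$ recording the comparisons to be witnessed. Applying the hypothesis to $S=S_n$ gives that $1\in B_\omega\cap(\theta(A)\cup S_n)'/\mathrm{Ann}(\theta(A)\cup S_n)$ is almost divisible; running the conjugation-by-$f^{1/2}$ and R\o{}rdam-type arguments of the first part relative to $C_n$, we produce countably many positive contractions $e\in B_\omega\cap(\theta(A)\cup S_n)'$ that witness, for every $f\in\mathcal F_n$, every $k\in\NN$ and every rational $\varepsilon>0$, the $k$-division of $\langle(f-\varepsilon)_+\rangle$ and the recorded comparisons. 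Put $S_{n+1}:=C^*\big(S_n\cup\{\text{these }e\}\big)\subseteq B_\omega\cap\theta(A)'$, again separable, and $C_{n+1}:=C^*(\theta(A)\cup S_{n+1})=C^*(C_n\cup S_{n+1})$. Set $C:=\overline{\bigcup_n C_n}$, so that $\theta(A)\subseteq C\subseteq B_\omega$. To see $C$ is pure, any positive element of $M_\infty(C)$ lies within $\varepsilon$ of some $f\in\mathcal F_n$ for $n$ large, the divisibility and comparison witnesses for $(f-\varepsilon)_+$ chosen at stage $n$ lie in $M_\infty(C_{n+1})\subseteq M_\infty(C)$, and the routine Cuntz-semigroup perturbation estimates (if $\|x-x'\|<\varepsilon$ then $(x-\varepsilon)_+\precsim x'$, and so on) convert these into the two conditions of \cref{dfn:ZMultPure} for $C$ after letting $\varepsilon\to 0$.

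The main obstacle is the bookkeeping around the annihilator ideals: the divisibility witnesses are initially only available in the quotients $B_\omega\cap(\theta(A)\cup S_n)'/\mathrm{Ann}(\theta(A)\cup S_n)$ and must be realized as honest elements of $B_\omega$ that honestly (almost) commute with $C_n$, which is exactly where $\sigma$-unitality of $A$ together with a Kirchberg-style $\varepsilon$-test is needed. A secondary point to handle with care is that at each stage only countably many witnesses are required, so that $C_n$ stays separable: this works because $\CatCu$ of a separable \ca{} is determined by a countable dense subset and all the relevant relations are approximate (via the $(x-\varepsilon)_+$ device), so countably many choices suffice; one also has to check that the conjugation and R\o{}rdam-type arguments of the first part apply uniformly over all $f\in\mathcal F_n$.
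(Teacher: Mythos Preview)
Your overall strategy coincides with the paper's: the first assertion is \cref{prp:RelCommUnit} (conjugate the divisibility witnesses by $\theta(a)^{1/2}$, then run R{\o}rdam's comparison argument) combined with \cref{prp:PureUltra} (pureness passes between $\theta$ and $\iota_\omega\theta$); the second is an inductive application of \cref{prp:RelCommUnit} producing a chain of pure inclusions whose union is pure. The paper packages these two ingredients as separate lemmas and then invokes them, whereas you inline everything, but the mathematics is the same.

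There is one genuine slip you should fix. In the almost-unperforation step you assume $(n+1)\langle\theta(a)\rangle\leq n\langle\theta(b)\rangle$ in $\Cu(B)$, but \cref{dfn:ZMultPure}(i) asks for the hypothesis $(n+1)[a]\leq n[b]$ in $\Cu(A)$. This is not merely cosmetic: the R{\o}rdam-type cancellation you cite needs a matrix $v$ with $(a-\varepsilon)_+^{\oplus(n+1)}=v(b^{\oplus n}\oplus 0)v^*$, and the crux is that $\theta(v)$ then commutes with the diagonal copies of your $e_k$ (since $e_k\in B_\omega\cap\theta(A)'$). If you only have the subequivalence in $\Cu(B_\omega)$, the witnessing matrix lives in $B_\omega$, not in $\theta(A)$, and there is no reason it commutes with the $e_k$, so the manipulation that slides $e_k$ past $v$ fails. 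Once you correct the hypothesis to the domain, your sketch goes through exactly as in the proof of \cref{prp:RelCommUnit}.

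A smaller point: in your divisibility step you assert that $k\langle e_k\rangle\leq\langle 1\rangle$ in the quotient directly yields $k\langle c_{a,k}\rangle\leq\langle\theta(a)\rangle$ in $\Cu(B_\omega)$. This is true, but the passage requires lifting the Cuntz subequivalence to $B_\omega\cap\theta(A)'$ modulo an annihilator matrix $h$, and then observing that $\theta(a)h=0$ kills the error term; you correctly flag this annihilator bookkeeping as the main obstacle, and the paper's \cref{prp:RelCommUnit} spells out precisely these manipulations.
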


The second part of \cref{thmIntro:1} says that, up to passing to the ultraproduct, certain pure \stHom{s} factor through a pure \ca{}. In the general setting, a central question in the study of regularity properties for maps is which \stHom{s} with a certain property factor, up to Murray-von Neumann equivalence (\cref{prp:ApproxMvNMult}), through a \ca{} with said property; see the comments before \cref{qst:Pure1} for a more in-depth discussion. Restricted to our case, the question is:

\begin{qst}[\ref{qst:Pure1}-\ref{qst:Pure2}]\label{qst:IntroQst}
  Let $\theta\colon A\to B$ be a \stHom{}. Is $\theta$ pure if and only if the map $\iota_w\circ\theta\colon A\to B_\omega$ factors, up to Murray-von Neumann equivalence, through a pure \ca{}?

  More generally, does there exist $n\in\NN$ such that, for any tuple $\theta_1,\ldots ,\theta_n$ of pairwise composable pure \stHom{s}, the composition $\iota_w\circ \theta_n\circ\cdots\circ \theta_1$ factors up to Murray-von Neumann equivalence through a pure \ca{}?
 \end{qst}

 We start our study defining the notion of pureness and asserting \cref{thmIntro:1} in \cref{sc:PuereStHom}, where we also provide a number of examples. In \cref{subsec:PermanenceProp} we establish several permanence properties of pureness that are used throughout the paper. In \cref{subsec:McDuffness}, the main section of the paper, we combine all the previous results to provide an answer to \cref{qst:IntroQst}. To do so, we exploit the structure of (abstract) Cuntz semigroup morphisms that are both almost divisible and almost unperforated. Our main technical result (\cref{prp:McDuffZ}) says that, at the level of Cuntz semigroups, any composition of two pure maps factors through a pure object. Restricted to \ca{s}, the result reads as follows:

\begin{thm}[cf. \ref{thm:MainCuA}]
 Let $\theta_1\colon A_1\to A_2$ and $\theta_2\colon A_2\to B$ be pure *-homo\-mor\-phisms. Then, there exists a \CuMor{} $\beta$ such that the following diagram commutes
 \[
 \xymatrix{
     \Cu (A_1) \ar[rr]^{\Cu (\theta_2\theta_1)} \ar[rd]_{-\otimes 1} && \Cu (B) \\
     & \Cu (A_1)\otimes \Cu (\mathcal{Z}) \ar[ru]_{\beta}
   } 
\]
\end{thm}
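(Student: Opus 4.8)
The plan is to deduce this statement from its abstract Cuntz-semigroup counterpart \cref{prp:McDuffZ} by means of the functoriality of $\Cu$. The first step is to record that a pure \stHom{} induces a pure \CuMor{}: this is immediate from \cref{dfn:ZMultPure}, since the versions of almost divisibility and almost unperforation that define pureness of a map are precisely designed so as to yield these properties, in the relative sense appropriate to maps, for the induced morphism $\Cu(\theta)\colon\Cu(A)\to\Cu(B)$. Applying this to our two \stHom{s} we obtain a composable pair of pure \CuMor{s} $\Cu(\theta_1)\colon\Cu(A_1)\to\Cu(A_2)$ and $\Cu(\theta_2)\colon\Cu(A_2)\to\Cu(B)$.

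Next I would invoke \cref{prp:McDuffZ} for this pair. It supplies a \CuMor{} $\beta\colon\Cu(A_1)\otimes\Cu(\mathcal{Z})\to\Cu(B)$ with $\beta\circ(-\otimes 1)=\Cu(\theta_2)\circ\Cu(\theta_1)$, where $-\otimes 1\colon\Cu(A_1)\to\Cu(A_1)\otimes\Cu(\mathcal{Z})$ is the canonical structure morphism $x\mapsto x\otimes[1_{\mathcal Z}]$ (equivalently, the morphism induced by the unital inclusion $\mathbb{C}\hookrightarrow\mathcal{Z}$, using that $\Cu(\mathbb{C})$ is the unit of the $\CatCu$-tensor product). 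Since $\Cu$ is a functor, $\Cu(\theta_2)\circ\Cu(\theta_1)=\Cu(\theta_2\theta_1)$, and hence the triangle in the statement commutes with this $\beta$. If one additionally wishes to stress that $\Cu(A_1)\otimes\Cu(\mathcal Z)$ is a \emph{pure} \CuSgp{}, this follows from $\Cu(\mathcal Z)$ being pure together with the permanence results of \cref{subsec:PermanenceProp}.

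Consequently the theorem is merely the $\mathrm{C}^*$-algebraic reading of \cref{prp:McDuffZ}, and the only genuine obstacle sits in that abstract result rather than here. There the delicate point is that one must carry out a two-stage absorption argument inside Cuntz semigroups, where no cancellation is available: heuristically, one uses almost divisibility of $\Cu(\theta_1)$ to create, inside $\Cu(A_2)$, enough room to host the relevant soft part of $\Cu(\mathcal{Z})$, and then almost unperforation of $\Cu(\theta_2)$ together with the isomorphism $\Cu(\mathcal{Z})\otimes\Cu(\mathcal{Z})\cong\Cu(\mathcal{Z})$ to absorb the accumulated error terms and transport the factorization forward to $\Cu(B)$. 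Arranging these approximations so that the resulting assignment is additive, order-preserving and compatible with suprema of increasing sequences ---that is, so that $\beta$ is an honest \CuMor{}--- is the real work; but with \cref{prp:McDuffZ} in hand, the proof of the theorem reduces to the two paragraphs above.
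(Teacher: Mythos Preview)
Your proposal is correct and follows exactly the paper's approach: the paper states \cref{thm:MainCuA} immediately after \cref{prp:McDuffZ} with no further proof, precisely because it is the direct $\mathrm{C}^*$-algebraic reading of that abstract result via the functor $\Cu$ (using that \stHom{s} induce genuine \CuMor{s}, and that pureness of $\theta_i$ means by definition that $\Cu(\theta_i)$ has $\Cu(\mathcal{Z})$-multiplication, so in particular $\Cu(\theta_1)$ is almost divisible and $\Cu(\theta_2)$ is almost unperforated). Your informal description of the mechanism inside \cref{prp:McDuffZ} is slightly off---the proof there builds the bimorphism $S_1\times Z\to T$ directly from the sets $\mu((k,n),x',x)$ and does not invoke $Z\otimes Z\cong Z$---but since you correctly defer to \cref{prp:McDuffZ} for that content, this does not affect the validity of your argument.
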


\cref{thm:MainCuA} is in fact more general. For the previous result to hold, one only needs $\theta_1,\theta_2$ to be cpc.~order-zero maps, with $\theta_1$ almost divisible and $\theta_2$ almost unperforated; see \cref{dfn:ZMultPure} for details. This generality provides us with many applications where the above statement can be used. Indeed, one obtains a complete answer to \cref{qst:IntroQst} when $A_1$ is AF and $B$ has stable rank one.

\begin{cor}[cf. \ref{prp:AFPureFactor}]
Let $A_1$ be a unital AF-algebra, and let $B$ be a unital \ca{} of stable rank one. Let $\theta_1\colon A_1\to A_2$ and $\theta_2\colon A_2\to B$ be unital, pure \stHom{s}. Then, $\theta_2\theta_1$ factors, up to approximately unitarily equivalence, through $A_1\otimes\mathcal{Z}$.
\end{cor}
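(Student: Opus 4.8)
The plan is to feed the $\Cu$-level factorization provided by \cref{thm:MainCuA} into Robert's classification of \stHom{s} out of inductive limits of $1$-dimensional NCCW-complexes with trivial $K_1$ \cite{Rob12Class}. Applying \cref{thm:MainCuA} to the pure \stHom{s} $\theta_1,\theta_2$ yields a \CuMor{} $\beta\colon\Cu(A_1)\otimes\Cu(\mathcal{Z})\to\Cu(B)$ with $\Cu(\theta_2\theta_1)=\beta\circ(-\otimes 1)$.

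Next I would realize this abstract tensor product by an honest \ca{}. Write $A_1=\varinjlim F_n$ with each $F_n$ finite-dimensional and $\mathcal{Z}=\varinjlim Z_n$ with each $Z_n$ a $1$-dimensional NCCW-complex with trivial $K_1$ (e.g.\ a matrix algebra over a prime dimension-drop algebra) and unital connecting maps. Then $A_1\otimes\mathcal{Z}=\varinjlim(F_n\otimes Z_n)$ is again an inductive limit of $1$-dimensional NCCW-complexes with trivial $K_1$, so Robert's theorem applies to \stHom{s} out of $A_1\otimes\mathcal{Z}$; it applies trivially to \stHom{s} out of the AF-algebra $A_1$ itself (an inductive limit of $0$-dimensional NCCW-complexes). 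Under the natural identification $\Cu(A_1\otimes\mathcal{Z})\cong\Cu(A_1)\otimes\Cu(\mathcal{Z})$, the first-factor embedding $j\colon A_1\to A_1\otimes\mathcal{Z}$, $a\mapsto a\otimes 1_{\mathcal{Z}}$, induces the map $-\otimes 1$, and $[1_{A_1\otimes\mathcal{Z}}]$ corresponds to $[1_{A_1}]\otimes[1_{\mathcal{Z}}]$. Since $\theta_2\theta_1$ is unital, $\beta([1_{A_1}]\otimes[1_{\mathcal{Z}}])=(\beta\circ(-\otimes 1))([1_{A_1}])=\Cu(\theta_2\theta_1)([1_{A_1}])=[1_B]$, so $\beta$ preserves the class of the unit.

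Now the existence part of Robert's theorem provides a \stHom{} $\varphi\colon A_1\otimes\mathcal{Z}\to B$ with $\Cu(\varphi)=\beta$. Since $\beta$ preserves the class of the unit and $B$ has stable rank one, the projection $\varphi(1_{A_1\otimes\mathcal{Z}})$ is Cuntz equivalent, hence Murray--von Neumann equivalent, to $1_B$; as $B$ is finite this forces $\varphi(1_{A_1\otimes\mathcal{Z}})=1_B$, so $\varphi$ is unital. Consequently $\varphi\circ j\colon A_1\to B$ is a unital \stHom{} with $\Cu(\varphi\circ j)=\Cu(\varphi)\circ\Cu(j)=\beta\circ(-\otimes 1)=\Cu(\theta_2\theta_1)$. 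Thus $\theta_2\theta_1$ and $\varphi\circ j$ are unital \stHom{s} from the AF-algebra $A_1$ into the stable rank one \ca{} $B$ inducing the same \CuMor{}, and the uniqueness part of Robert's theorem shows them to be approximately unitarily equivalent; this is precisely the claimed factorization of $\theta_2\theta_1$ through $A_1\otimes\mathcal{Z}$.

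The substantive input is \cref{thm:MainCuA}; beyond it, the only points requiring care are the compatibility of the abstract tensor product $\Cu(A_1)\otimes\Cu(\mathcal{Z})$ with $\Cu(A_1\otimes\mathcal{Z})$ (and of $-\otimes 1$ with $\Cu(j)$), and the observation that $A_1\otimes\mathcal{Z}$ is still an inductive limit of $1$-dimensional NCCW-complexes with trivial $K_1$, so that Robert's classification theorem applies. It is exactly this last point that fails for a general, non-AF, domain $A_1$, which is what makes \cref{qst:IntroQst} delicate in that generality; the passage from the Cuntz-semigroup statement to approximate unitary equivalence is otherwise formal.
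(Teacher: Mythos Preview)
Your proposal is correct and follows essentially the same route as the paper's proof: apply \cref{thm:MainCuA} (equivalently \cref{prp:McDuffZ}) to obtain the factorization at the level of $\Cu$, identify $\Cu(A_1)\otimes\Cu(\mathcal{Z})$ with $\Cu(A_1\otimes\mathcal{Z})$ (the paper cites \cite[Proposition~6.4.13]{AntPerThi18TensorProdCu} here), observe that $A_1\otimes\mathcal{Z}$ is an inductive limit of $1$-dimensional NCCW-complexes with trivial $K_1$, and invoke Robert's theorem. You supply more detail than the paper does---in particular the verification that $\beta$ preserves the class of the unit, the argument that $\varphi$ is unital via finiteness of $B$, and the explicit appeal to the uniqueness clause of Robert's theorem---but the strategy is the same.
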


Further, if $B$ has strict comparison, one can set $\theta_2=id_B$ to answer \cref{qst:IntroQst} for a single \stHom{} in the above situation. We state this in Corollary \ref{cor:AFStrict}.

The last section of the paper, \cref{subsec:WMult}, is devoted to two types of pure maps: $q$-rational \stHom{s} (\cref{dfn:RatCuMor}) and soft, pure \stHom{s} (see \cref{dfn:WMult}). Here, we deduce analogues of \cref{thm:MainCuA} for other types of tensorial absorption. Again, such results are also valid for cpc. order-zero maps, at the expense of $\beta$ being only a generalized \CuMor{}.

\begin{thm}[cf. \ref{thm:MainCuAMq}, \ref{thm:MainCuAW}]\label{thmInt:MqAW}
 Let $\theta_1\colon A_1\to A_2$ and $\theta_2\colon A_2\to B$ be $q$-rational (resp. soft, pure) \stHom{s}. Then, there exists a \CuMor{} $\beta$ such that the following diagram commutes
 \[
 \xymatrix{
     \Cu (A_1) \ar[rr]^{\Cu (\theta_2\theta_1)} \ar[rd]_{-\otimes 1} && \Cu (B) \\
     & \Cu (A_1\otimes \mathcal{D}) \ar[ru]_{\beta}
   } 
\]
where $\mathcal{D}$ is the UHF-algebra $M_q$ (resp. the Jacelon-Razak algebra $\mathcal{W}$).
\end{thm}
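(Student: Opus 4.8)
The plan is to run the argument of \cref{thm:MainCuA} (built on its abstract counterpart \cref{prp:McDuffZ}) with $\mathcal{Z}$ and $\Cu(\mathcal{Z})$ replaced throughout by $M_q$ and $\Cu(M_q)$ (resp.\ by $\mathcal{W}$ and $\Cu(\mathcal{W})$). Two things make the $\mathcal{Z}$-case work, and both have to be re-established for $\mathcal{D}\in\{M_q,\mathcal{W}\}$: (a) an abstract factorization at the level of Cuntz semigroups, namely that a composite $\varphi_2\varphi_1$ of \CuMor{s} with $\varphi_1$ of the appropriate divisibility type and $\varphi_2$ almost unperforated factors, via $-\otimes 1$, through $S\otimes\Cu(\mathcal{D})$; and (b) the identification $\Cu(A_1\otimes\mathcal{D})\cong\Cu(A_1)\otimes\Cu(\mathcal{D})$ intertwining the map $-\otimes 1\colon\Cu(A_1)\to\Cu(A_1\otimes\mathcal{D})$ with the inclusion $\Cu(A_1)\to\Cu(A_1)\otimes\Cu(\mathcal{D})$, which lets one transport (a) back to \ca{s} and identify $\Cu(\theta_2\theta_1)$ with $\beta\circ(-\otimes 1)$.

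Step (b) is the easy half: both $M_q$ and $\mathcal{W}$ have stable rank one, so the Cuntz-semigroup tensor product computes the Cuntz semigroup of the C*-tensor product, which gives the required natural identification. For step (a), the key structural observation is that $\Cu(M_q)$ (resp.\ $\Cu(\mathcal{W})$) plays for ``$q$-divisibility'' (resp.\ for soft divisibility) exactly the role that $\Cu(\mathcal{Z})$ plays for divisibility in \cref{prp:McDuffZ}: it is the minimal $\CatCu$-semigroup of that flavour, and $S\otimes\Cu(\mathcal{D})$ is the corresponding ``freely divisible'' object over $S$. The definitions of $q$-rational \CuMor{} (\cref{dfn:RatCuMor}) and of soft, pure \CuMor{} (\cref{dfn:WMult}) are set up precisely so that the composite $\varphi_2\varphi_1$ inherits the right one-sided combination of properties --- $q$-divisibility up to a $q$-adic step from $\varphi_1$ together with almost unperforation from $\varphi_2$ (resp.\ softness and almost divisibility from $\varphi_1$, almost unperforation from $\varphi_2$) --- so that the construction of \cref{prp:McDuffZ} applies: one defines $\beta$ on elementary tensors $s\otimes d$ by the same soft/functional interpolation recipe, and then checks additivity, monotonicity, preservation of suprema, and that the assignment respects the defining relations of $S\otimes\Cu(\mathcal{D})$. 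Finally one specializes $S=\Cu(A_1)$, $\varphi_i=\Cu(\theta_i)$, and invokes (b).

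The main obstacle will be re-doing the construction of $\beta$ in the $M_q$ case. In the $\mathcal{Z}$-case one may subdivide Cuntz classes by arbitrary positive integers, whereas for $M_q$ one is confined to denominators in $\ZZ[1/q]$; thus every $\varepsilon$-cutting and interpolation step in \cref{prp:McDuffZ} has to be reorganised so as to use only $q$-divisibility, and one must verify at each stage that the $q$-rationality hypotheses on $\theta_1$ are exactly strong enough to feed the construction. The $\mathcal{W}$-case is comparatively soft: since $\Cu(\mathcal{W})$ has no nonzero compact elements and the maps in question are soft, the whole argument takes place in the ``realified'' part, where the $\mathcal{Z}$-proof transfers essentially verbatim; the only points needing care are that the resulting $\beta$ indeed lands in and is compatible with the soft part, and that it is a genuine \CuMor{} --- rather than merely a generalized one --- precisely when $\theta_1$ and $\theta_2$ are \stHom{s} rather than cpc.~order-zero maps.
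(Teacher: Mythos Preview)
Your outline is sound and step~(b) is exactly what the paper does, citing \cite[Proposition~7.6.3]{AntPerThi18TensorProdCu} for $M_q$ and additionally \cite[Theorem~5.1.2]{Rob13b} for $\mathcal{W}$. Your treatment of the $\mathcal{W}$-case also matches the paper: one simply restricts the bimorphism $\alpha$ already built in \cref{prp:McDuffZ} to the soft part $(S\otimes Z)_{\soft}\cong S\otimes[0,\infty]$, and softness of $\varphi_1$ is used only to check that $\gamma(x\otimes 1_{[0,\infty]})=\varphi_2\varphi_1(x)$.

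Where your proposal diverges from the paper is the $M_q$-case, and the divergence stems from a misreading of the structure of $\Cu(M_q)=K_q\sqcup(0,\infty]$. The soft part $(0,\infty]$ is \emph{identical} to that of $Z$, so the ``$\varepsilon$-cutting and interpolation'' machinery of \cref{prp:McDuffZ} needs no reorganisation at all: since $q=q^2\neq 1$ guarantees arbitrarily large finite divisors of $q$, $q$-divisibility implies almost divisibility and $q$-unperforation implies almost unperforation, so \cref{prp:McDuffZ} applies verbatim and hands you the bimorphism on $S\times Z$. The genuinely new work is entirely on the \emph{enlarged compact part} $K_q\supsetneq\NN$: for $t=k/n\in K_q$ one sets $\alpha_q(x,k/n):=k\,\omega_n(x)$, where $\omega_n(x)$ is the $\varphi_2$-image of any $z$ with $\varphi_1(x)=nz$. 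The key point, which your sketch does not isolate, is that $q$-unperforation of $\varphi_2$ makes this \emph{well-defined} (if $nz=nz'$ then $\varphi_2(z)=\varphi_2(z')$); an analogue of \cref{prp:ExtensionZtoW} then glues the compact and soft pieces. So the obstacle is not that one has \emph{fewer} denominators available for interpolation, but that one has \emph{more} compact elements on which to define the map; your proposed rebuild of the soft construction with $q$-adic steps would be redundant labour and would still leave the definition on $K_q\setminus\NN$ unaddressed.
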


Combining the results above with Robert's classification result, we obtain:

\begin{cor}[cf. \ref{cor:RobMq}
]Let $\theta_1\colon A_1\to A_2$ and $\theta_2\colon A_2\to B$ be two unital $q$-rational \stHom{s}. Assume that $A_1$ is stably isomorphic to an  inductive limit of $1$-dimensional NCCW-complexes with trivial $K_1$-group, and that $B$ is of stable rank one. Then, $\theta_2\theta_1$ factors, up to approximate unitary equivalence, through $A_1\otimes M_q$.
\end{cor}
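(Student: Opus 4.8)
The plan is to reduce to Robert's classification theorem \cite{Rob12Class} via the Cuntz‑semigroup factorization already available. Since $\theta_2\theta_1$ is a composition of unital \stHom{s}, it is itself unital, and it suffices to produce a \emph{unital} \stHom{} $\psi\colon A_1\otimes M_q\to B$ with $\psi\circ(\id_{A_1}\otimes 1_{M_q})$ approximately unitarily equivalent to $\theta_2\theta_1$. As $\theta_1$ and $\theta_2$ are $q$-rational, \cref{thm:MainCuAMq} applies and yields a \CuMor{} $\beta\colon\Cu(A_1\otimes M_q)\to\Cu(B)$ with $\beta\circ(-\otimes 1)=\Cu(\theta_2\theta_1)$, where $-\otimes 1=\Cu(\id_{A_1}\otimes 1_{M_q})$. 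Evaluating this identity at $[1_{A_1}]$ and using that $\id_{A_1}\otimes 1_{M_q}$ and $\theta_2\theta_1$ are unital shows $\beta([1_{A_1\otimes M_q}])=[1_B]$, so $\beta$ respects the classes of the units.

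Next I would verify that $A_1\otimes M_q$ lies in the class covered by Robert's theorem. Writing $M_q\cong\varinjlim_n M_{q^n}$ and, after stabilizing, $A_1\cong\varinjlim_i X_i$ with each $X_i$ a $1$-dimensional NCCW‑complex with $K_1(X_i)=0$, one has $A_1\otimes M_q\cong\varinjlim_{i,n}\bigl(X_i\otimes M_{q^n}\bigr)$. Each $X_i\otimes M_{q^n}$ is again a $1$-dimensional NCCW‑complex and $K_1(X_i\otimes M_{q^n})\cong K_1(X_i)=0$, so $A_1\otimes M_q$ is stably isomorphic to an inductive limit of $1$-dimensional NCCW‑complexes with trivial $K_1$-group. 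Together with the hypothesis that $B$ has stable rank one, this places us in the setting of \cite{Rob12Class}.

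Applying Robert's classification, the \CuMor{} $\beta$ lifts to a \stHom{} $\psi\colon A_1\otimes M_q\to B$ with $\Cu(\psi)=\beta$, which can be taken unital because $\beta([1_{A_1\otimes M_q}])=[1_B]$; moreover any two unital \stHom{s} between these algebras inducing the same \CuMor{} are approximately unitarily equivalent. Hence $\psi\circ(\id_{A_1}\otimes 1_{M_q})\colon A_1\to B$ is a unital \stHom{} with $\Cu\bigl(\psi\circ(\id_{A_1}\otimes 1_{M_q})\bigr)=\beta\circ(-\otimes 1)=\Cu(\theta_2\theta_1)$, and the uniqueness part of Robert's theorem (valid since $A_1$ is, stably, an inductive limit of $1$-dimensional NCCW‑complexes with trivial $K_1$ and $B$ has stable rank one) gives that $\psi\circ(\id_{A_1}\otimes 1_{M_q})$ and $\theta_2\theta_1$ are approximately unitarily equivalent. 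This is precisely the asserted factorization of $\theta_2\theta_1$ through $A_1\otimes M_q$.

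The hard part will be the bookkeeping needed to invoke Robert's theorem correctly: handling the non‑unitality/stabilization subtleties in \cite{Rob12Class} so that the lift $\psi$ is genuinely unital (equivalently, working with the unital refinement of the invariant, which is unambiguous here since $K_1$ vanishes), and confirming that $A_1\otimes M_q$ is covered by that classification. Once the commuting diagram of \CuMor{s} from \cref{thm:MainCuAMq} is in place, the passage from a Cuntz‑level factorization to approximate unitary equivalence is automatic from Robert's uniqueness statement, so no further analytic work is required.
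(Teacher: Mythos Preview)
Your proposal is correct and follows essentially the same approach as the paper: the paper simply says ``Applying Robert's classification result (\cite[Theorem~1.0.1]{Rob12Class}), one obtains'' before stating the corollary, and your argument spells out precisely this application---obtaining $\beta$ from \cref{thm:MainCuAMq}, checking that $A_1\otimes M_q$ lies in Robert's class and that $\beta$ preserves the unit class, and then invoking both existence and uniqueness from \cite{Rob12Class}. One minor quibble: writing $M_q\cong\varinjlim_n M_{q^n}$ is imprecise since $q$ is a supernatural number rather than an integer, but the intended statement (that $M_q$ is an inductive limit of matrix algebras, so $A_1\otimes M_q$ remains an inductive limit of $1$-dimensional NCCW-complexes with trivial $K_1$) is of course correct.
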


\subsection*{Acknowledgments} This work was carried out during two visits of EV to JB at Universidad de Zaragoza. He is grateful to the institution for its hospitality.

Both authors would also like to thank Hannes Thiel for comments on a first draft of the paper.

%==========================================================================================
%==========================================================================================%==========================================================================================

\section{Preliminaries}

\begin{pgr}[The Cuntz semigroup]
Given two positive elements $a,b$ in a \ca{} $A$, we write $a\precsim b$ whenever $a$ is \emph{Cuntz subequivalent} to $b$, that is, whenever there exists $(r_n)_n\subseteq A$ such that $a=\lim_n r_n b r_n^*$. Further, we say that $a$ is \emph{Cuntz equivalent} to $b$, and write $a\sim b$, whenever $a\precsim b$ and $b\precsim a$.

The \emph{Cuntz semigroup} of $A$ is defined to be the set $(A\otimes\mathcal{K})_+/\sim$, equipped with the addition induced by diagonal addition and the partial order induced by $\precsim$. We denote this monoid by $\Cu (A)$; see \cite{CowEllIva08CuInv}.

As shown in \cite{WinZac09CpOrd0}, any completely positive, contractive (cpc.), order-zero map $\varphi\colon A\to B$ (for example, any \stHom{}) induces a well-defined, partially ordered, monoid morphism $\Cu(\varphi )\colon \Cu (A)\to \Cu (B)$ given by $\Cu(\varphi )([a])=[\varphi (a)]$.
\end{pgr}

\begin{pgr}[Abstract Cuntz semigroups]
As defined in \cite{CowEllIva08CuInv}, a positively ordered monoid $S$ is a \emph{\CuSgp{}} if it satisfies the following four conditions:
\begin{itemize}
\item[\axiomO{1}] every increasing sequence has a supremum;
\item[\axiomO{2}] every element is the supremum of a $\ll$-increasing sequence;
\item[\axiomO{3}] $x'+y'\ll x+y$ whenever $x'\ll x$ and $y'\ll y$;
\item[\axiomO{4}] $\sup_n (x_n+y_n)=\sup_n x_n +\sup_n y_n$ for any pair of increasing sequences $(x_n),(y_n)$ in $S$,
\end{itemize}
where one writes $x\ll y$ if, for any increasing sequence $(z_n)_n$ with supremum greater than or equal to $y$, there exists $n\in\NN$ such that $x\leq z_n$.

A monoid morphism $S\to T$ between \CuSgp{s} is said to be a \emph{generalized \CuMor{}} if it preserves both the order and suprema of increasing sequences. A \emph{\CuMor{}} is any generalized \CuMor{} that also preserves the $\ll$-relation. Any cpc. order-zero map induces a generalized \CuMor{}, while any \stHom{} induces a \CuMor{}; see  \cite{WinZac09CpOrd0} and \cite{CowEllIva08CuInv} respectively.

The reader is referred to \cite{AntPerThi18TensorProdCu} for an in-depth introduction to \CuSgp{s}.
\end{pgr}

%==========================================================================================
%==========================================================================================
%==========================================================================================

\section{\texorpdfstring{Pure \stHom{s}}{Pure *-homomorphisms}}\label{sc:PuereStHom}

We introduce in \cref{dfn:ZMultPure} a notion of \emph{$\Cu (\mathcal{Z})$-multiplication} for \CuMor{s}, and we say that a \stHom{} is \emph{pure} if its induced \CuMor{} has $\Cu (\mathcal{Z})$-multiplication. We then provide a  number of examples (\ref{exa:ZMult}-\ref{prp:ExaPureSR1}), and show that \textquoteleft{}$\mathcal{Z}$-stable like\textquoteright{} morphisms are always pure. In fact, their composition with the embedding to the codomain ultraproduct always factorizes through a pure \ca{}; see \cref{prp:ZstabImpPure}.

The section ends with \cref{qst:Pure1} and its weakening \cref{qst:Pure2}. A satisfactory general answer to the second question is given in \cref{subsec:McDuffness}.

\begin{pgr}\label{pgr:PurenessCuSgp}
Recall from \cite[Definition~2.1]{Win12NuclDimZstable} (see \cite{RobTik17NucDimNonSimple} for the concrete definition displayed here) that a \CuSgp{} $S$ is said to be
\begin{itemize}
\item \emph{almost unperforated} if, whenever $x,y\in S$ are such that $(m+1)x\leq my$ for some $m\in\NN$, one has $x\leq y$.
\item \emph{almost divisible} if for every $k\in\NN$ and $x',x\in S$ such that $x'\ll x$ there exists $z\in S$ such that $kz\leq x$ and $x' \leq (k+1)z$.
\end{itemize}

One says that a \ca{} is \emph{pure} if its Cuntz semigroup is almost unperforated and almost divisible.

In \cite{AntPerThi18TensorProdCu}, a theory of tensor products and \emph{multiplication} for Cuntz semigroups was developed. Amongst other results, the authors showed in \cite[Theorem~7.3.11]{AntPerThi18TensorProdCu} that a \CuSgp{} $S$ is pure if and only if $S\cong S\otimes \Cu (\mathcal{Z})$. Further, one can see that the Cuntz semigroup of any $\mathcal{Z}$-stable \ca{} has $\Cu (\mathcal{Z})$-multiplication (ie. it tensorially absorbs $\Cu (\mathcal{Z})$).
\end{pgr}

\begin{dfn}\label{dfn:ZMultPure}
 Let $\varphi\colon S\to T$ be a generalized \CuMor{}. We will say that $\varphi$ is
 \begin{itemize}
  \item[(i)] \emph{almost unperforated} if $\varphi (x)\leq \varphi (y)$ whenever $(m+1)x\leq my$ for some $m\in\mathbb N$.
 \item[(ii)] \emph{almost divisible} if for every $k\in\NN$ and $x',x\in S$ such that $x'\ll x$ there exists $z\in T$ such that $kz\leq \varphi (x)$ and $\varphi (x')\leq (k+1)z$.
 \end{itemize}
 
 The generalized \CuMor{} $\varphi$ will be said to have \emph{$\Cu (\mathcal{Z})$-multiplication} if it is almost unperforated and almost divisible, and a cpc. order-zero map $\theta\colon A\to B$ will be called \emph{pure} if $\Cu (\theta )$ has $\Cu (\mathcal{Z})$-multiplication.
\end{dfn}

Let us begin the section with some examples of pure maps:

%==========================================================================================
\begin{exa}\label{exa:ZMult}
 It is readily checked that any \stHom{} $A\to B$ is pure whenever $A$ or $B$ is pure. More generally, the same holds if $\Cu (A\to B)$ factors through the Cuntz semigroup of a pure \ca{}; see \cref{prp:CompMult} for details and more general statements.
 
 As a concrete example, any \stHom{} that factors through an infinite reduced free product $\ast_{i=1}^{\infty} (A,\tau)$, with $\tau$ a trace, is pure. These products are always simple and monotracial (\cite[Proposition~3.1]{Avi82FreeProd}), have stable rank one (\cite[Theorem~3.8]{DykHaaRor97SRFreeProd}), and strict comparison (\cite[Proposition~6.3.2]{Rob12Class}). A standard argument, using the positive solution to the ranks problem for simple, stable rank one \ca{s} from \cite{Thi20RksOps}, proves that the Cuntz semigroup is almost divisible; see, for example, \cite[Remark~4.4]{Vil23NScaMult} or its generalization \cref{prp:ExaPureSR1} below.
\end{exa}

%==========================================================================================
\begin{exa}
Let $A,B$ be \ca{s}, and assume that $B$ is almost divisible. Then, it follows from \cite[Lemma~6.1(i)]{RobRor13Divisibility} that the first factor embedding $A\to A\otimes B$ is almost unperforated and, therefore, pure.
\end{exa}

%==========================================================================================
Another example of pure maps is given by the following proposition. Recall that a \ca{} $A$ is said to be \emph{nowhere scattered} if it has no nonzero elementary ideal-quotients; see \cite{ThiVil22NowSca}.

\begin{prp}\label{prp:ExaPureSR1}
Let $A$ be a nowhere scattered \ca{} of stable rank one, and let $\theta\colon A\to B$ be a cpc. order-zero, almost unperforated map. Then, $\theta$ is pure.
\end{prp}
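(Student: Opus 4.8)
The plan is this. Since $\theta$ is assumed to be almost unperforated, by \cref{dfn:ZMultPure} it suffices to check that $\Cu(\theta)$ is almost divisible. So I fix $k\in\NN$ and $x'\ll x$ in $\Cu(A)$, and I must produce $z\in\Cu(B)$ with $kz\leq\Cu(\theta)(x)$ and $\Cu(\theta)(x')\leq(k+1)z$. The key move is to find instead an element $z'\in\Cu(A)$ together with an integer $m\in\NN$ satisfying
\[
kz'\leq x \andSep (m+1)x'\leq m(k+1)z'.
\]
Granting this, set $z:=\Cu(\theta)(z')$. Then $kz=\Cu(\theta)(kz')\leq\Cu(\theta)(x)$, while feeding the second inequality into the almost unperforation of $\theta$ (applied with the roles $x'$ and $(k+1)z'$) gives $\Cu(\theta)(x')\leq\Cu(\theta)\bigl((k+1)z'\bigr)=(k+1)z$. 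Hence $z$ works, $\Cu(\theta)$ is almost divisible, and $\theta$ is pure.

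To build $z'$, first use the axiom \axiomO{2} to interpolate $x'\ll x''\ll x$. The nowhere scatteredness of $A$ gives weak divisibility of $\Cu(A)$ (the Cuntz-semigroup characterizations of nowhere scatteredness, cf.\ \cite{ThiVil22NowSca}): for every prescribed $N$ there are finitely many $y_1,\dots,y_n\in\Cu(A)$ with $Ny_i\leq x$ for all $i$ and $x''\leq y_1+\dots+y_n$. The plan is then to upgrade this to a single repeated divisor by invoking the stable rank one hypothesis: I expect that, using the fine structure of $\Cu(A)$ for stable rank one \ca{s} (its inf-semilattice order, the splitting into compact and soft elements, and the realizability of rank functions in the spirit of \cite{Thi20RksOps}, as already used in the simple case in \cite[Remark~4.4]{Vil23NScaMult}), one can produce a single $y\in\Cu(A)$ and an $n\in\NN$ with $Ny\leq x$, $x''\leq ny$, and $n/N$ bounded away from $1$ by a constant independent of $N$ (the relevant constant coming from $x''\ll x$, so that $x''$ has strictly smaller rank than $x$ wherever this rank is finite; the locus where it is infinite is handled separately, divisibility being easy there). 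Setting $q:=\lfloor N/k\rfloor$ and $z':=qy$, one gets $kz'=kqy\leq Ny\leq x$, and from $x'\leq x''\leq ny$ one gets $(m+1)x'\leq(m+1)ny\leq m(k+1)qy=m(k+1)z'$ provided $(m+1)n\leq m(k+1)q$; a short computation shows that since $n/N$ is bounded away from $1$, this is arranged by taking first $N$ and then $m$ large enough.

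The main obstacle is precisely the middle step: passing from the weak, several-piece divisibility that nowhere scatteredness supplies to a single divisor of roughly the right size, with both $Ny\leq x$ (an honest Cuntz comparison, not just a rank estimate) and $x''\leq ny$. The semigroup $\Cu(A)$ itself need not be almost divisible, so one genuinely needs stable rank one to split and recombine pieces while controlling ranks, and one genuinely needs the almost unperforation of $\theta$ to absorb the slack $\tfrac{m+1}{m}$ caused by the fact that a rank value as specific as $\tfrac1k$ of that of $x$ may simply not be attained in $\Cu(A)$. A secondary difficulty is that $A$ is not assumed simple, so this rank bookkeeping has to be carried out over the cone of lower semicontinuous quasitraces of $A$, compatibly with its ideal lattice, rather than over a trace simplex, and elements represented by projections must be treated alongside soft elements; reconciling these two cases uniformly is where the argument leans most heavily on the structure theory of Cuntz semigroups of stable rank one \ca{s}.
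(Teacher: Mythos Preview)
Your overall strategy matches the paper's: produce a near-divisor $y\in\Cu(A)$ whose rank is roughly $\tfrac{1}{k}$ that of $x$, then let the almost unperforation of $\theta$ absorb the slack. The difference is in how the near-divisor is obtained, and this is exactly where your proposal has a genuine gap.

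The paper does not build $y$ from weak $(2,\omega)$-divisibility and then ``upgrade'' via the inf-semilattice structure; it invokes directly the rank-realization theorem for nowhere scattered stable rank one \ca{s} \cite[Theorem~7.14]{AntPerRobThi22CuntzSR1}, which produces $y\in\Cu(A)$ with $\widehat{y}=\tfrac{2}{2k+1}\,\widehat{x}$ in $L(F(\Cu(A)))$. This immediately yields $\infty y=\infty x$ together with the strict functional inequalities $\lambda(ky)<\lambda(x)$ and $\lambda(x)<\lambda((k+1)y)$ for every normalized $\lambda$. A single application of \cite[Proposition~2.1]{OrtPerRor12CoronaStability} then converts these to
\[
(n+1)(ky)\leq nx \andSep (n+1)x\leq n(k+1)y
\]
for some $n$, and almost unperforation of $\Cu(\theta)$ finishes the job on \emph{both} sides. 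Note in particular that the paper never obtains an honest inequality $ky\leq x$; both comparisons carry an $\tfrac{n+1}{n}$ factor, and both are discharged by almost unperforation. Your asymmetric requirement $kz'\leq x$ is strictly stronger than necessary and is not what rank control alone delivers.

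The step you flag as ``the main obstacle'' --- passing from many small pieces to a single $y$ with $Ny\leq x$ and $x''\leq ny$ while keeping $n/N$ bounded away from $1$ --- is essentially a reformulation of the rank-realization theorem itself. That theorem is the non-simple extension of \cite{Thi20RksOps} carried out in \cite{AntPerRobThi22CuntzSR1}, and its proof is substantial; it is not something one can expect to reconstruct from weak divisibility plus inf-semilattice arguments in a paragraph. So your sketch is pointing at the right ingredient but has not actually supplied it. The fix is simply to cite \cite[Theorem~7.14]{AntPerRobThi22CuntzSR1} for the existence of $y$ with the prescribed rank, and then proceed as the paper does (or as you do, once you relax the one-sided honest comparison to a two-sided $(n+1)/n$ comparison).
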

\begin{proof}
We need to show that $\Cu (\theta)$ is almost divisible. To see this, let $x\in \Cu (A)$ and take $k\in\NN$. Then, it follows from \cite[Theorem~7.14]{AntPerRobThi22CuntzSR1} that there exists $y\in\Cu (A)$ such that $\widehat{y}=\frac{2}{2k+1}\widehat{x}$ in $L(F(\Cu (A)))$; see \cite[Section~7]{AntPerRobThi22CuntzSR1} for the appropiate definitions.

Thus, we have
\[
	\infty x=\infty y ,\quad
	\lambda (ky) < \lambda (x) ,\andSep
	\lambda (x) < \lambda ((k+1)y)
\]
for every normalized functional $\lambda \in F(\Cu (A))$.

By \cite[Proposition~2.1]{OrtPerRor12CoronaStability}, there exists $n\in\NN$ such that
\[
	(n+1)(ky)\leq nx,\andSep
	(n+1)x\leq n(k+1)y.
\]

Using that $\Cu (\theta )$ is almost unperforated, we obtain $k\Cu (\theta )(y)\leq \Cu (\theta )(x)\leq (k+1)\Cu (\theta )(y)$, as desired.
\end{proof}

%==========================================================================================

Recall that a separable unital \ca{} is $\mathcal{Z}$-stable if and only if $\mathcal{Z}$ embeds unitally into $A_\omega \cap A'$; see, for example, \cite[Proposition 4.4]{Kir06CentralSeqPI}. A reindexing argument shows that this is equivalent to $\mathcal{Z}$ embedding unitally to $A_\omega \cap A'\cap C'$ for any separable sub-\ca{} $C\subseteq A_\omega \cap A'$. In particular, any such \ca{} satisfies that the unit $1\in A_\omega \cap A'\cap C'$ is almost divisible.

In what follows (\cref{prp:ZstabImpPure}), we show that any \textquoteleft{}$\mathcal{Z}$-stable-like\textquoteright{} \stHom{} is pure. For that, recall that given a \stHom{} $\theta\colon A\to B$, the annihilator of $\theta$, in symbols ${\rm Ann} (\theta)$, is defined as the set of  elements $x$ in $B$ such that $x\theta(A)=\theta(A)x=\{0\}$.

\begin{prp}\label{prp:RelCommUnit}
Let $\theta\colon A\to B$ be a \stHom{}. Assume that there exists a net of contractive, positive elements $(e_\lambda )_{\lambda\in\Lambda}$ in $\theta (A)'\cap B$ such that $(e_\lambda )_\lambda$ is an approximate unit for $\theta (A)$ and such that $\overline{e_\lambda}$ is almost divisible in $\theta (A)'\cap B/{\rm Ann}(\theta (A))$ for each $\lambda$. Then, there exists a sub-\ca{} $C\subseteq \theta (A)'\cap B$ such that the map $A\to C^*(\theta (A),C)$ is pure. If $\Lambda=\NN$, $C$ can be taken to be separable.
\end{prp}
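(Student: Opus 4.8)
The goal is to build a sub-$\mathrm{C}^*$-algebra $C \subseteq \theta(A)' \cap B$ so that the inclusion $A \to C^*(\theta(A), C)$ is pure, i.e. its induced Cuntz semigroup morphism is almost unperforated and almost divisible. The strategy is to promote the almost-divisibility of each $\overline{e_\lambda}$ in the quotient $\theta(A)' \cap B / \mathrm{Ann}(\theta(A))$ to actual divisibility data inside $\theta(A)' \cap B$ itself, and then to assemble all these witnesses into one (separable, if $\Lambda = \mathbb N$) sub-$\mathrm{C}^*$-algebra $C$.

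First I would recall the key structural point: since $(e_\lambda)_\lambda$ is an approximate unit for $\theta(A)$ lying in the relative commutant, multiplication by $e_\lambda$ eventually acts as (an approximation of) the identity on $\theta(A)$, and the quotient map $\theta(A)' \cap B \to \theta(A)' \cap B / \mathrm{Ann}(\theta(A))$ restricts to a $*$-isomorphism on the hereditary subalgebra generated by $\theta(A)$ together with any fixed $e_\lambda$, up to the annihilator. Concretely, for each $\lambda$ and each pair $x' \ll x$ in $\Cu(A)$ and each $k \in \NN$, almost divisibility of $\overline{e_\lambda}$ in the quotient produces a positive element $\overline{z}$ with $k\overline{z} \le [\overline{e_\lambda}]$ and $[\overline{e_\lambda}] \le (k+1)\overline{z}$ in the quotient's Cuntz semigroup; pulling back, together with the fact that $[\theta(a)] \le [\overline{e_\lambda}]$ holds modulo the annihilator for $a$ in the hereditary closure cut down appropriately, gives an element $z_\lambda \in \theta(A)' \cap B$ witnessing the required inequalities $k[\theta(a \cdot z_\lambda)] \le \ldots$ after passing to $\Cu$. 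The point is that the annihilator does not interfere because we only ever compare with classes dominated by $[\theta(A)]$.

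Next, to get the actual conclusion I would invoke \cref{thmIntro:1} (= \ref{prp:ZstabImpPure}) in spirit, or more precisely argue directly: fix a countable cofinal family in the domain's Cuntz semigroup data (possible since $A$ is effectively handled through a countable collection of pairs $x' \ll x$ when we only need the $\Cu$-level statement, and a separable exhaustion when $\Lambda = \NN$), collect the countably many witnesses $z_\lambda$, and let $C$ be the sub-$\mathrm{C}^*$-algebra of $\theta(A)' \cap B$ they generate together with enough of the $e_\lambda$'s to retain the approximate unit property. Then $C^*(\theta(A), C)$ contains $\theta(A)$, the inclusion $A \to C^*(\theta(A),C)$ factors the inclusion $A \to B$, and by construction $\Cu$ of this inclusion is almost divisible; almost unperforation is inherited because the comparison inequalities $(m+1)x \le my$ in $\Cu(A)$ are witnessed, after applying $\Cu(\theta)$, inside $C^*(\theta(A),C)$ where the divisible witnesses now supply the needed room (this is the same mechanism as in \cref{prp:ExaPureSR1}, using that almost divisibility plus the comparison hypotheses force $\Cu(\theta)(x) \le \Cu(\theta)(y)$). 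When $\Lambda = \NN$ the whole construction uses only countably many generators, so $C$ is separable.

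The main obstacle I anticipate is the bookkeeping around the annihilator: one must check that Cuntz comparisons valid in the quotient $\theta(A)' \cap B / \mathrm{Ann}(\theta(A))$ lift to Cuntz comparisons in $C^*(\theta(A), C)$ that are still meaningful for elements coming from $A$ — that is, that the classes $[\theta(a)]$ and the classes of the products $\theta(a) z_\lambda$ are unaffected by the annihilator, and that the supremum/$\ll$ manipulations needed to pass from the pointwise witnesses to a single morphism-level statement are compatible with taking the directed limit over $\lambda$. The second subtlety is ensuring $C$ can be chosen separable: this requires that only countably many witnesses are needed, which in turn uses that $A$ is $\sigma$-unital (so $\Cu(A)$ is countably based in the relevant sense) together with $\Lambda = \NN$; I would handle this by a standard inductive exhaustion argument, enlarging $C$ at each stage to absorb the finitely many new witnesses required by the next element of a fixed countable cofinal sequence.
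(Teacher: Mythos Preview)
Your overall strategy---lift divisibility witnesses for the $\overline{e_\lambda}$ from the quotient to $\theta(A)'\cap B$, collect them into $C$, and then argue pureness---matches the paper. But two steps are not just ``bookkeeping''; they are the substance of the proof, and your sketch gets one of them wrong.

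\textbf{Almost unperforation.} Your appeal to ``the same mechanism as in \cref{prp:ExaPureSR1}'' is a genuine gap. That proposition relies on stable rank one and nowhere-scatteredness of the domain (via rank functions and \cite{OrtPerRor12CoronaStability}), none of which is available here. The correct argument is R{\o}rdam's commutant trick \cite[Lemma~4.3]{Ror04StableRealRankZ}: given $a^{\oplus (k+1)}\precsim b^{\oplus k}$ in $M_n(A)$, take a witness $v$, and use that $f_\mu$ commutes with $\theta(A)$ to get $(f_\mu^{\oplus n}\theta(a))^{\oplus (k+1)}\precsim (f_\mu^{\oplus n}\theta(b))^{\oplus k}$. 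Now the divisibility relations you already built give $\theta((a-\varepsilon)_+)e_\lambda^{\oplus n}\precsim (f_\mu^{\oplus n}\theta(a))^{\oplus (k+1)}$ on one side and $(f_\mu^{\oplus n}\theta(b))^{\oplus k}\precsim\theta(b)$ on the other; chaining and letting $\lambda$ run over the approximate unit gives $\theta(a)\precsim\theta(b)$. Divisibility alone, without this multiplication step, does not force comparison.

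\textbf{Lifting from the quotient and the r\^ole of the annihilator.} You say the annihilator ``does not interfere'', but you need to make this concrete. A relation $\overline{c}\precsim\overline{d}$ in the quotient lifts only to $(c-\varepsilon)_+\precsim d\oplus h$ for some matrix $h$ over $\mathrm{Ann}(\theta(A))$. The paper fixes explicit R{\o}rdam-type witnesses $r_{\mu,q},s_{\mu,q}$ realizing these relations, puts \emph{their matrix entries} into $C$, and then the key computation is: amplify each relation by $\iota_n$, multiply by $\theta(a)^{\oplus l}$, commute $\theta(a)$ past the witnesses (since they lie in $\theta(A)'$), and observe that $\theta(a)\cdot h=0$ kills the annihilator summand. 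This is what produces $k[\theta(a)f_\mu^{\oplus n}]\leq[\theta(a)]$ and $[\theta(a)e_\lambda^{\oplus n}]\leq (k+1)[\theta(a)f_\mu^{\oplus n}]$ uniformly in $a$. Your sketch never isolates this step, and without it there is no path from divisibility of $\overline{e_\lambda}$ to divisibility of $[\theta(a)]$.

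\textbf{Separability.} You do not need $A$ to be $\sigma$-unital (it is not assumed). The witnesses $f_\mu$, $r_{\mu,q}$, $s_{\mu,q}$ are indexed by $(\lambda,k,p,q)\in\Lambda\times\NN\times\QQ_{+,\leq 1}^2$ and do not depend on elements of $A$ at all---they divide $e_\lambda$, and then work for every $a$ via the multiplication trick above. When $\Lambda=\NN$ this index set is countable, so $C$ is separable automatically.
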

\begin{proof}
For each triple $\mu=(\lambda , k ,p)\in\Lambda\times\NN\times\QQ_{+,\leq 1}$, let $f_\mu \in (\theta (A)'\cap B)_+$ be such that
\[
	\overline{f_\mu^{\oplus k}}\precsim \overline{e_\lambda},\andSep 
	\overline{(e_\lambda-p)_+}\precsim \overline{f_\mu^{\oplus (k+1)}}
\]
in $\Cu (\theta (A)'\cap B/{\rm Ann}(\theta (A)))$, where the superscript $\oplus k$ denotes the diagonal formed by $k$ copies of the element.

Thus, up to a cut-down of $f_\mu$, there exists a finite matrix $h_\mu$ with entries in ${\rm Ann}(\theta (A)))$ such that 
\[
	f_\mu^{\oplus k}\oplus 0\precsim e_\lambda\oplus h_\mu,\andSep 
	(e_\lambda-p)_+\oplus 0\precsim f_\mu^{\oplus (k+1)}\oplus h_\mu
\]
in $(\theta (A)'\cap B)\otimes M_l$ for some $l$, where the $0$'s denote zero matrices of appropiate sizes.

Now, for any $q\in\QQ_{+,\leq 1}$, it follows from \cite[Proposition~2.4]{Ror92StructureUHF2} that there exist finite matrices   $r_{\mu ,q}$ and $s_{\mu ,q}$ over $\theta (A)'\cap B$ such that 
\begin{equation}\label{eq:1}
	(f_\mu^{\oplus k}-q)_+\oplus 0 = r_{\mu ,q}(e_\lambda\oplus h_\mu )r_{\mu ,q}^*
\end{equation}
and
\begin{equation}\label{eq:2}
	(e_\lambda-p-q)_+\oplus 0 = s_{\mu ,q}(f_\mu^{\oplus (k+1)}\oplus h_\mu)s_{\mu ,q}^*.
\end{equation}

Set $C=C^*(\{ e_\lambda ,f_\mu, r_{\mu ,q}(i,j), s_{\mu ,q}(i,j)\}_{\mu ,q,i,j})$. If $\Lambda=\NN$, then $C$ is separable by construction.

Let us show that $A\to C^*(\theta (A),C)$ is pure. For any $m\in\NN$ and \ca{} $E$, let $\iota_m\colon E\to M_m (E)$ denote the map $e\mapsto e^{\oplus m}$. We will also use this notation for the matrix amplification $M_k (E)\to M_{km}(E)$ defined entry-wise. Note that, for a matrix $e\in M_n (E)$, $e^{\oplus m}$ denotes the $mn\times mn$ matrix $e\oplus \ldots \oplus e$, while $\iota_m (e)$ is the $mn\times mn$ where each entry of $e$ has been replaced by a diagonal $m\times m$ matrix. An important fact that we will use is: Given $a\in M_n (A)$ and $b\in M_m (\theta (A)'\cap B)$, then $\iota_n (b)$ commutes with $\theta (a)^{\oplus m}$ (and the roles of $a$ and $b$ can be reversed).

First, given a contraction $a\in M_n (A)_+$, note that applying $\iota_n$ to \cref{eq:1} and multiplying by an $l=l(\mu ,q)$ diagonal matrix of $\theta(a)$'s, one gets
\[
\begin{split}
	(\theta(a)(f_\mu-q)_+^{\oplus n})^{\oplus k}\oplus 0 &= \theta(a)^{\oplus l} \iota_n((f_\mu^{\oplus k}-q)_+\oplus 0)\\ &= \theta(a)^{\oplus l}\iota_n(r_{\mu ,q})\iota_n(e_\lambda\oplus h_\mu )\iota_n(r_{\mu ,q})^*\\
&= \iota_n (r_{\mu ,q}) \theta(a)^{\oplus l} \iota_n (e_\lambda\oplus h_\mu ) \iota_n(r_{\mu ,q})^*\\ &= \iota_n (r_{\mu ,q}) (\theta(a) \iota_n(e_\lambda ) \oplus 0)\iota_n(r_{\mu ,q})^*\\
&\precsim \theta (a)\iota_n (e_\lambda )
\end{split}
\]
and, by multiplying an $m=m(\mu ,q)$ diagonal matrix of $a$'s to an enlarged \cref{eq:2}, we obtain
\[
\begin{split}
\theta(a)(e_\lambda-p-q)_+^{\oplus n}\oplus 0 &= \theta(a)^{\oplus m}\iota_n(s_{\mu ,q}) \iota_n (f_\mu^{\oplus (k+1)}\oplus h_\mu) \iota_n(s_{\mu ,q})^*\\
&= \iota_n (s_{\mu ,q})  ((\theta(a)\iota_n f_\mu^{\oplus (k+1)}\oplus 0) \iota_n(s_{\mu ,q})^*\\
&= \iota_n (s_{\mu ,q})  ((\theta(a) f_\mu^{\oplus n})^{\oplus (k+1)}\oplus 0) \iota_n(s_{\mu ,q})^*\\
&\precsim (\theta (a)f_\mu^{\oplus n})^{\oplus (k+1)}
.
\end{split}
\]

Thus,
\[
((\theta(a) f_\mu^{\oplus n}-q)_+)^{\oplus k}\precsim \theta(a) e_\lambda^{\oplus n}\precsim \theta (a),\andSep
(\theta(a)( e_\lambda^{\oplus n}-p-q)_+)\precsim (\theta(a) f_\mu^{\oplus n})^{\oplus (k+1)}
\]
in $C^*(\theta (A),C)\otimes\mathcal{K}$.

Since this holds for any choice of $p$ and $q$, taking suprema gives $(\theta(a) f_\mu^{\oplus n})^{\oplus k}\precsim \theta (a)$ and $(\theta(a) e_\lambda^{\oplus n})\precsim (\theta(a) f_\mu^{\oplus n})^{\oplus (k+1)}$. Further, since the $e_\lambda^{\oplus n}$'s are an approximate unit for $\theta(a)$, it follows that $a$ is almost divisible in $C^*(\theta (A),C)\otimes\mathcal{K}$. Since this holds for any finite matrix over $\theta (A)$, we get that  $A\to C^*(\theta (A),C)$ is almost divisible.

To show that $A\to C^*(\theta (A),C)$ is almost unperforated, we follow the same strategy as in \cite[Lemma~4.3]{Ror04StableRealRankZ}. Thus, let $a,b\in M_n (A)_+$ be contractions such that $a^{\oplus (k+1)}\precsim b^{\oplus k}$ in $A\otimes \mathcal{K}$ for some $k\in\NN$. For any $\varepsilon >0$, there exists a finite matrix $v$ with entries in $A$ such that $(a-\varepsilon )_+^{\oplus (k+1)}= v (b^{\oplus k}\oplus 0) v^*$. Given $\mu = (\lambda ,k,p)$, we have 
\[
	(f_\mu^{\oplus n}\theta((a-\varepsilon )_+))^{\oplus (k+1)}
	= \theta (v) ((f_\mu^{\oplus n}\theta (b))^{\oplus k}\oplus 0) \theta (v)^*.
\]

In particular, $(f_\mu^{\oplus n}\theta((a-\varepsilon )_+))^{\oplus (k+1)} \precsim (f_\mu^{\oplus n}\theta (b))^{\oplus k}$ in $\Cu (C^*(C,A))$. Recall from the above computations that we have
\[
(\theta((a-\varepsilon )_+) e_\lambda^{\oplus n})\precsim (\theta((a-\varepsilon )_+) f_\mu^{\oplus n})^{\oplus (k+1)}
,
\andSep
(\theta(b) f_\mu^{\oplus n})^{\oplus k}\precsim \theta (b)
.
\]

Chaining  these three $\precsim$-relations together, one obtains
\[
(\theta((a-\varepsilon )_+) e_\lambda^{\oplus n})\precsim (\theta((a-\varepsilon )_+) f_\mu^{\oplus n})^{\oplus (k+1)}\precsim (f_\mu^{\oplus n}\theta (b))^{\oplus k}
\precsim \theta (b).
\]

Using once again that  the $e_\lambda^{\oplus n}$'s are an approximate unit for $\theta((a-\varepsilon )_+)$, we get $(\theta (a)-2\varepsilon )_+\precsim\theta (b)$. This proves that $A\to C^*(\theta (A),C)$ is almost unperforated, as required.
\end{proof}

%==========================================================================================
\cref{prp:PureUltra} below shows that, when studying pureness of maps, one can restrict to those with an ultraproduct for a codomain.

\begin{lma}\label{prp:PureUltra}
Let $\theta\colon A\to B$ be a cpc. order-zero map, and let $\iota_\omega \colon B\to B_\omega$ be the natural inclusion. Then, $\theta$ is pure if and only if $\iota_\omega\theta$ is pure.
\end{lma}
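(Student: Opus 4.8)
The statement is an equivalence between $\theta$ being pure and $\iota_\omega\theta$ being pure, so I would prove the two implications separately, and I expect the "only if" direction to be routine while the "if" direction is the genuine content.

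First, for the forward direction, I would use the fact (standard, and implicit in the preliminaries) that $\iota_\omega\colon B\to B_\omega$ is a \stHom{}, hence $\Cu(\iota_\omega)$ is a \CuMor{}, and that $\Cu(\iota_\omega\theta)=\Cu(\iota_\omega)\circ\Cu(\theta)$. So $\iota_\omega\theta$ is the composition of a pure map with a \stHom. I would then invoke the permanence statement alluded to in \cref{exa:ZMult} and developed in \cref{prp:CompMult}: a composition $T\to T'\xrightarrow{} T''$ where the first map is pure (has $\Cu(\mathcal Z)$-multiplication) and the second is a \CuMor{} again has $\Cu(\mathcal Z)$-multiplication, because almost divisibility and almost unperforation of the composite only require that the second map preserve order, sums, and the $\ll$-relation (for the $x'\ll x$ hypothesis in almost divisibility one also needs to pick the intermediate $\ll$-witness, but a \CuMor{} sends $\ll$ to $\ll$, so one can first pass through $\Cu(\theta)$). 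This gives "$\theta$ pure $\Rightarrow\iota_\omega\theta$ pure" with essentially no work beyond citing the earlier results.

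The reverse direction is the main obstacle and requires a left-inverse-type argument at the level of Cuntz semigroups. The key point is that $\Cu(\iota_\omega)\colon\Cu(B)\to\Cu(B_\omega)$ is \emph{order-reflecting} (and more): if $[a],[b]\in\Cu(B)$ with $[\iota_\omega(a)]\le[\iota_\omega(b)]$ in $\Cu(B_\omega)$, then $[a]\le[b]$ in $\Cu(B)$. This is a standard but slightly delicate fact about the Cuntz semigroup of the ultrapower; the usual proof takes a sequence $r_n$ in $B_\omega$ witnessing $(\iota_\omega a-\varepsilon)_+\precsim\iota_\omega b$, lifts it to a sequence of sequences in $B$, and uses the definition of the ultrapower together with $\varepsilon$-cutdowns to extract, for each $\varepsilon$, an element of $B$ implementing $(a-2\varepsilon)_+\precsim b$; I would either cite this (it appears in the literature on Cuntz semigroups of ultraproducts) or sketch it. Granting order-reflection, suppose $\iota_\omega\theta$ is pure. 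For almost unperforation of $\Cu(\theta)$: if $(m+1)x\le my$ in $\Cu(A)$, then $(m+1)\Cu(\iota_\omega\theta)(x)\le m\,\Cu(\iota_\omega\theta)(y)$ trivially, so by almost unperforation of $\iota_\omega\theta$ we get $\Cu(\iota_\omega)(\Cu(\theta)(x))=\Cu(\iota_\omega\theta)(x)\le\Cu(\iota_\omega\theta)(y)=\Cu(\iota_\omega)(\Cu(\theta)(y))$, and order-reflection yields $\Cu(\theta)(x)\le\Cu(\theta)(y)$. For almost divisibility: given $k$ and $x'\ll x$ in $\Cu(A)$, almost divisibility of $\iota_\omega\theta$ produces $\bar z\in\Cu(B_\omega)$ with $k\bar z\le\Cu(\iota_\omega\theta)(x)$ and $\Cu(\iota_\omega\theta)(x')\le(k+1)\bar z$; here I must produce an honest $z\in\Cu(B)$, so I would first interpolate $x'\ll x''\ll x$, apply almost divisibility with $x''$ in place of $x$, pick $\bar z'\ll\bar z$ with $k\bar z'\le\Cu(\iota_\omega\theta)(x'')$ and $\Cu(\iota_\omega\theta)(x')\le(k+1)\bar z'$ still holding after shrinking, and then use that $\Cu(\iota_\omega)\colon\Cu(B)\to\Cu(B_\omega)$ has "dense range on compactly-contained elements" — i.e. any $\bar z'\ll\Cu(\iota_\omega)(w)$ for some $w$ is $\le\Cu(\iota_\omega)(z)$ for some $z\le w$ in $\Cu(B)$ — to descend $\bar z'$ to a suitable $z\in\Cu(B)$ sandwiched appropriately; order-reflection then transfers the two inequalities back to $\Cu(B)$.

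I expect the hard part to be exactly this descent step in the almost-divisibility argument: extracting a genuine element of $\Cu(B)$ from an element of $\Cu(B_\omega)$ that is only compactly contained in the image of something, while keeping both the $\le$ and the $\ge$ inequalities intact. The cleanest route is probably to phrase everything in terms of the standard structural facts about $\Cu(B_\omega)$ (order-embedding of $\Cu(B)$, and the description of $\ll$ and of the image via $\varepsilon$-cutdowns), possibly reducing to positive elements in $B\otimes\mathcal K$ and chasing $\varepsilon$'s directly rather than working abstractly; one should also take a moment to check the order-zero case does not introduce anything new, since $\Cu(\theta)$ and $\Cu(\iota_\omega\theta)$ are still generalized \CuMor{s} and all the above only used that they preserve order, sums, and suprema, with $\ll$-preservation only invoked for the \stHom{} $\iota_\omega$.
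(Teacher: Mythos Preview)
Your forward direction and your treatment of almost unperforation are correct and match the paper: the former is just \cref{prp:CompMult}, and the latter is exactly the order-reflection argument the paper has in mind when it says ``analogous'' (lift a conjugating element from $B_\omega$ to $B$ by picking a good index).

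The abstract approach you sketch for almost divisibility, however, has a genuine gap. Two steps fail. First, ``pick $\bar z'\ll\bar z$ with $\Cu(\iota_\omega\theta)(x')\le(k+1)\bar z'$ still holding'' is not justified: you would need $\Cu(\iota_\omega\theta)(x')\ll(k+1)\bar z$, but since $\theta$ is only cpc.\ order-zero, $\Cu(\theta)$ need not preserve $\ll$, and no amount of interpolating $x'\ll x''\ll x$ fixes this. Second, and more seriously, even granting your density claim (which is correct: $\bar z'\ll\Cu(\iota_\omega)(w)$ implies $\bar z'\le\Cu(\iota_\omega)(z)$ for some $z\le w$), the resulting $z\in\Cu(B)$ only satisfies $z\le\Cu(\theta)(x'')$, which gives $kz\le k\Cu(\theta)(x'')$, not $kz\le\Cu(\theta)(x)$. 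You have no control on $\Cu(\iota_\omega)(z)$ from above by $\bar z$, so order-reflection cannot recover the upper bound. The point is that $\Cu(B)\hookrightarrow\Cu(B_\omega)$ has no retraction compatible with both inequalities simultaneously.

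The paper does exactly what you list as the fallback, and that route is not optional here. It reduces to $a\in M_k(A)_+$, applies almost divisibility to $[(a-\varepsilon/3)_+]\ll[a]$ to get $b\in M_k(B_\omega)$ together with finite matrices $r,s$ over $B_\omega$ realizing $\iota_\omega\theta((a-\varepsilon/2)_+)=r\,b^{\oplus(n+1)}r^*$ and $b^{\oplus n}=s\,\iota_\omega\theta(a)\,s^*$, then uses $M_k(B_\omega)\cong(M_k(B))_\omega$ to lift $b,r,s$ \emph{simultaneously} to $b_0,r_0,s_0$ in matrices over $B$ by choosing a single good index along $\omega$. Lifting the whole package at once is what makes both inequalities survive in $\Cu(B)$; you should lead with this rather than treating it as a contingency.
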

\begin{proof}
If $\theta$ is pure, the composition $\iota_\omega\theta$ is pure; see \cref{prp:CompMult} for details.

Conversely, assume that $\iota_\omega\theta$ is pure. We need to show that $\Cu (\theta)$ is both almost divisible and almost unperforated. First, let $n\in\NN$, $[a]\in\Cu (A)$ and $\varepsilon >0$. Since any element in $\Cu (A)$ can be written as the supremum of classes in $M_\infty (A)_+$ (and since $\Cu (\theta )$ preserves suprema), we may assume $a\in M_k (A)_+$ for some $k\in\NN$. Using that $\iota_\omega\theta$ is pure, we find $[b]\in\Cu (B_\omega)$ such that $[\iota_\omega\theta ((a-\varepsilon /3)_+)]\leq (n+1)[b]$ and $n[b]\leq [\iota_\omega\theta (a)]$. Upon cutting-down $a$ and $b$ if needed (say, to $(a-\varepsilon /2)_+$), we may assume $b\in B_\omega\otimes M_k$ and that there exist finite matrices $r,s$ over $B_\omega$ such that $\iota_\omega\theta ((a-\varepsilon /2)_+)=rb^{\oplus (n+1)}r^*$ and $b^{\oplus n}=s\iota_\omega\theta (a)s^*$.

Further, note that $B_\omega \otimes M_k\cong (B \otimes M_k)_\omega$. Thus, by going sufficiently far in the sequences corresponding to $b$ and the entries of $r$ and $s$, we can  find elements $b_0\in M_k (B)$ and $r_0,s_0\in M_\infty (B)$ such that $[\theta ((a-\varepsilon )_+)]\leq (n+1) [b_0]$ and $n[b_0]\leq[\theta (a)]$. This shows that $\Cu (\theta )$ is almost divisible.

The proof of almost unperforation is analoguous.
\end{proof}

%==========================================================================================
\begin{thm}\label{prp:ZstabImpPure}
Let $A$ be $\sigma$-unital, and let $\theta\colon A\to B$ be a \stHom{}. Assume that $1\in B_\omega\cap\theta (A)'/{\rm Ann}(\theta (A))$ is almost divisible. Then, $\theta$ is pure.
 
 Further, if $1\in B_\omega\cap(\theta (A)\cup S)'/{\rm Ann}(\theta (A)\cup S)$ is almost divisible for every separable sub-\ca{} $S\subseteq B_\omega\cap\theta (A)'$, there exists a pure sub-\ca{} $C\subseteq B_\omega$ such that $\theta (A)\subseteq C \subseteq B_\omega$.
\end{thm}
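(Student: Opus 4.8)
The plan is to deduce \cref{prp:ZstabImpPure} from \cref{prp:RelCommUnit} and \cref{prp:PureUltra}, with the $\sigma$-unitality of $A$ supplying the approximate unit needed to feed into \cref{prp:RelCommUnit}. First I would reduce to the ultraproduct: by \cref{prp:PureUltra}, $\theta$ is pure if and only if $\iota_\omega\theta\colon A\to B_\omega$ is pure, so it suffices to prove that $\iota_\omega\theta$ is pure (and, for the second part, that it factors through a pure sub-\ca{} of $B_\omega$). So from now on I work with the \stHom{} $\iota_\omega\theta\colon A\to B_\omega$.

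Next I would produce the net of positive contractions in the relative commutant required by \cref{prp:RelCommUnit}. Since $A$ is $\sigma$-unital, pick a countable approximate unit $(u_n)_n$ for $A$; then $(\theta(u_n))_n$ is an approximate unit for $\theta(A)$, but its elements need not commute with $\theta(A)$. To fix this, pass to $B_\omega$: the sequence $(\theta(u_n))_n$ defines a single positive contraction $e \in B_\omega$, and a standard reindexing/$\varepsilon$-test argument shows that $e$ acts as a unit on $\iota_\omega\theta(A)$ and lies in $\iota_\omega\theta(A)'\cap B_\omega$ (indeed $e x = x = x e$ for all $x\in\iota_\omega\theta(A)$, since $\theta(u_n)\theta(a)\to\theta(a)$ for each fixed $a$). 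Thus the one-element ``net'' $\{e\}$ sits in $\iota_\omega\theta(A)'\cap B_\omega$ and is an approximate unit for $\iota_\omega\theta(A)$. Now $e$ is a \emph{unit} for $\iota_\omega\theta(A)$, so its image $\overline e$ in $\left(\iota_\omega\theta(A)'\cap B_\omega\right)/{\rm Ann}(\iota_\omega\theta(A))$ is exactly the unit $1 \in (B_\omega)_\omega \cap \cdots$ — here one needs to identify $(B_\omega)_\omega$ with $B_\omega$ (or, more cleanly, observe that the relative commutant modulo annihilator computed inside $B_\omega$ agrees with the object appearing in the hypothesis, since $\iota_\omega\theta(A)$ is already separable-in-$B_\omega$ after the reindexing and the statement's $B_\omega\cap\theta(A)'/{\rm Ann}(\theta(A))$ is literally this quotient). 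By hypothesis this unit is almost divisible. Applying \cref{prp:RelCommUnit} with $\Lambda$ a singleton (or $\NN$) yields a sub-\ca{} $C\subseteq \iota_\omega\theta(A)'\cap B_\omega$ such that $A\to C^*(\iota_\omega\theta(A),C)$ is pure; since this map factors $\iota_\omega\theta$, \cref{exa:ZMult}/\cref{prp:CompMult} gives that $\iota_\omega\theta$ is pure, hence (by \cref{prp:PureUltra}) $\theta$ is pure.

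For the second statement, the point is to run the above through an iterated (countable) exhaustion argument so that the resulting algebra is genuinely pure, not merely the source of a pure map. I would build an increasing chain $\iota_\omega\theta(A) = C_0 \subseteq C_1 \subseteq C_2 \subseteq \cdots$ of separable sub-\ca{s} of $B_\omega$ as follows: given $C_n$, set $S = C_n \cap \iota_\omega\theta(A)'$ — wait, more carefully, one wants at each stage to apply \cref{prp:RelCommUnit} (with $\Lambda = \NN$, giving a \emph{separable} $C$) to the inclusion $\iota_\omega\theta(A)\hookrightarrow B_\omega$ but now with the extra requirement that the commutant be taken relative to $\theta(A)\cup S$ for $S$ ranging over the (separable) subalgebras accumulated so far; the hypothesis that $1\in B_\omega\cap(\theta(A)\cup S)'/{\rm Ann}(\theta(A)\cup S)$ is almost divisible for \emph{every} separable $S\subseteq B_\omega\cap\theta(A)'$ is exactly what lets the construction continue. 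Then $C := \overline{\bigcup_n C_n}$ is a separable sub-\ca{} of $B_\omega$ with $\iota_\omega\theta(A)\subseteq C$, and one checks $\Cu(C)$ is almost unperforated and almost divisible by a back-and-forth argument: any finitely many elements and any comparison/divisibility datum in $\Cu(C)$ already live in some $C_n\otimes\mathcal K$, and the witnesses supplied at stage $n+1$ (the $f_\mu$, $r_{\mu,q}$, $s_{\mu,q}$ from the proof of \cref{prp:RelCommUnit}, now applied with the relevant $S\subseteq C_n$) lie in $C_{n+1}\subseteq C$; since the relations \eqref{eq:1},\eqref{eq:2} are equalities holding in $B_\omega$ and involve only elements of $C$, they persist in $\Cu(C)$. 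Passing to the limit over $n$ shows $C$ itself is pure. Finally $\theta(A)\subseteq \iota_\omega(\theta(A)) \subseteq C \subseteq B_\omega$ (identifying $\theta(A)$ with its image under the canonical inclusion) gives the claimed factorization.

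The main obstacle I anticipate is the bookkeeping in the iterated construction: one must arrange that every divisibility instance ``$x'\ll x$ in $\Cu(C)$'' and every unperforation instance ``$(m+1)x\le my$ in $\Cu(C)$'' gets its witness inserted at some finite stage, which requires the usual separability-plus-cofinality juggling (each $C_n$ is separable, so $\Cu(C_n)$ has a countable basis of $\ll$-relations, and one enumerates these together with the finitely many ambient subalgebras $S$ to be handled). A secondary technical point is making sure the relative-commutant-modulo-annihilator objects are correctly identified under the passage to $B_\omega$ and under the reindexing that turns $(\theta(u_n))_n$ into an honest unit for $\iota_\omega\theta(A)$; this is where the hypothesis must be invoked with $S'$ not just $\theta(A)'$, since at later stages the approximate unit elements and witnesses accumulated so far must also be centralized. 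Neither difficulty is conceptual — both are standard $\sigma$-unital/separable exhaustion arguments — but they are where the care lies.
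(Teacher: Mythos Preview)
Your proposal is correct and follows the same route as the paper: reduce to $B_\omega$ via \cref{prp:PureUltra}, then apply \cref{prp:RelCommUnit} (with a lift of the unit as a one-element approximate unit for $\iota_\omega\theta(A)$) for the first part, and iterate the construction for the second. The paper's argument is more streamlined in two places: it never needs the $(B_\omega)_\omega$ detour you flag as unnecessary (the hypothesis is already about $B_\omega\cap\theta(A)'/{\rm Ann}(\theta(A))$, so \cref{prp:RelCommUnit} applies directly to $\iota_\omega\theta\colon A\to B_\omega$), and for the second part it simply records that an inductive limit along pure inclusions is pure, rather than carrying out your hands-on back-and-forth verification inside $\Cu(C)$.
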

\begin{proof}
As shown in \cref{prp:PureUltra} above, a \stHom{} $A\to B$ is pure if and only if the induced map $A\to B_\omega$ is pure. Further, since $1\in B_\omega\cap\theta (A)'/{\rm Ann}(\theta (A))$ is almost divisible, it follows from \cref{prp:RelCommUnit} above that $A\to B_\omega$ is pure. This gives the first part of the statement.

For the second part, note that it follows from \cref{prp:RelCommUnit} that there exists a separable sub-\ca{} $C_1\subseteq B_\omega\cap \theta (A)'$ such that inclusion $\theta (A)\subseteq C^*(\theta (A), C_1)$ is pure. Since $C_1$ is separable, we know by our assumption that $1\in B_\omega\cap(\theta (A)\cup C_1)'/{\rm Ann}(\theta (A)\cup C_1)$ is almost divisible. \cref{prp:RelCommUnit} proves the existence of a separable $C_2\subseteq B_\omega\cap\theta (A)'\cap C_1'$ such that $C^*(\theta (A), C_1)\subseteq C^*(\theta (A), C_1, C_2)$ is pure.

Proceeding inductively, one obtains a sequence of pure inclusions in $B_\omega$. Their limit, denoted by $C$, is pure.
 \end{proof}

 %==========================================================================================
 The previous result justifies the following question:
 
 \begin{qst}\label{qst:Pure1}
  Let $\theta\colon A\to B$ be a \stHom{}. Is $\theta$ pure if and only if the composition $\iota_w\theta\colon A\to B_\omega$ factors, up to Murray-von Neumann equivalence (see \cref{prp:ApproxMvNMult}), through a pure \ca{}?
 \end{qst}
 
 \cref{qst:Pure1} is a particular instance of a general question that one can ask for any property $P$. Namely, does any \stHom{} with $P$ \textquoteleft{}come\textquoteright{} from a \ca{} with $P$? In other words, does $P$ admit a McDuff type characterization?
 
 This question has been posed for: real rank zero inclusions \cite{GabNea23RRZero}, where it remains open; for $\mathcal{O}_2$-stable morphisms, answered in \cite[Corollary~4.5]{Gabe2020ANewProofKirchberg}; and
for morphisms of nuclear dimension 0, with a partial answer provided in \cite{CASTILLEJOS2024110368}.
 
 For any of the properties $P$ listed above, one has that an inductive system where each map satisfies $P$ has a limit with $P$. Loosely, one can interpret this as saying that any infinite composition of maps with $P$ always factorizes through a \ca{} with $P$. In this sense, one may also ask if there exists a natural number $n_P$ such that the composition of $n_P$ morphisms with $P$ always factorizes through a \ca{} with $P$. Specialized to our setting, the question is:

%==========================================================================================
  \begin{qst}\label{qst:Pure2}
  Does there exist $n\in\NN$ such that, for any tuple $\theta_1,\ldots ,\theta_n$ of pairwise composable pure \stHom{s}, the composition $\iota_w\theta_n\cdots \theta_1$ factors up to Murray-von Neumann equivalence through a pure \ca{}?
 \end{qst}
 
 To our knowledge, an answer to \cref{qst:Pure2} is not known for real rank zero inclusions. In what follows, we investigate the question for our notion of pureness.

%==========================================================================================
%==========================================================================================
%==========================================================================================

\section{\texorpdfstring{Pureness and $\Cu (\mathcal{Z})$-multiplication}{Pureness and Cu(Z)-multiplication}}\label{subsec:PermanenceProp}

This section compiles permanence properties of $\Cu (\mathcal{Z})$-multiplication for gene\-ra\-li\-zed \CuMor{s}. We state some of these results in the language of abstract Cuntz semigroups to highlight when the $\ll$-relation needs to be preserved.

%==========================================================================================
Propositions \ref{prp:idMult} and \ref{prp:CompMult} below are in analogy to Proposition~3.19 and Lemma~3.20 from \cite{Gabe2020ANewProofKirchberg}.

\begin{prp}\label{prp:idMult}
 Let $S$ be a \CuSgp{}. Then $S\cong S\otimes \Cu (\mathcal{Z})$ if and only if $\id_S$ has $\Cu (\mathcal{Z})$-multiplication.
\end{prp}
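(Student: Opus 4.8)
The plan is to observe that both sides of the claimed equivalence are, essentially by definition, reformulations of $S$ being pure, i.e.\ almost unperforated and almost divisible in the sense of \cref{pgr:PurenessCuSgp}.

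First I would invoke the characterization already recalled in \cref{pgr:PurenessCuSgp}: by \cite[Theorem~7.3.11]{AntPerThi18TensorProdCu}, a \CuSgp{} $S$ satisfies $S\cong S\otimes\Cu(\mathcal{Z})$ if and only if $S$ is almost unperforated and almost divisible. Hence it suffices to check that $\id_S$ has $\Cu(\mathcal{Z})$-multiplication if and only if $S$ is almost unperforated and almost divisible.

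For this last point I would simply specialize \cref{dfn:ZMultPure} to the map $\varphi=\id_S\colon S\to S$, which is a \CuMor{} (it preserves the order, suprema of increasing sequences, and the $\ll$-relation), hence in particular a generalized \CuMor{} to which \cref{dfn:ZMultPure} applies. Condition~(i) of \cref{dfn:ZMultPure} for $\id_S$ states that $x\leq y$ whenever $(m+1)x\leq my$ for some $m\in\NN$, which is verbatim the definition of $S$ being almost unperforated. Condition~(ii) for $\id_S$ states that for every $k\in\NN$ and every $x'\ll x$ in $S$ there is $z\in S$ with $kz\leq x$ and $x'\leq (k+1)z$, which is verbatim the definition of $S$ being almost divisible. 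Therefore $\id_S$ has $\Cu(\mathcal{Z})$-multiplication exactly when $S$ is both almost unperforated and almost divisible, and combined with the equivalence from \cite[Theorem~7.3.11]{AntPerThi18TensorProdCu} this yields the statement.

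There is no genuine obstacle here: the proposition is a bookkeeping observation matching \cref{dfn:ZMultPure}, evaluated at the identity, against the classical notions of \cref{pgr:PurenessCuSgp}, with the only external input being the tensor-product characterization of pure \CuSgp{s}.
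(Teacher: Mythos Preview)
Your proposal is correct and follows essentially the same approach as the paper: both reduce the equivalence to the characterization of pure \CuSgp{s} from \cite{AntPerThi18TensorProdCu} and then observe that \cref{dfn:ZMultPure} for $\id_S$ collapses verbatim to the definitions in \cref{pgr:PurenessCuSgp}. The only cosmetic difference is that the paper cites both Theorems~7.3.11 and~7.5.4 of \cite{AntPerThi18TensorProdCu}, whereas you invoke only~7.3.11 (which, as \cref{pgr:PurenessCuSgp} itself indicates, already suffices).
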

\begin{proof}
 It follows from Theorems~7.3.11 and 7.5.4 in \cite{AntPerThi18TensorProdCu} that $S\cong S\otimes \Cu (\mathcal{Z})$ if and only if $S$ is almost divisible and almost unperforated. The statement now follows from the defintions.
\end{proof}

%==========================================================================================

\begin{prp}\label{prp:CompMult}
 Let $\varphi_1\colon S_1\to S_2$ and $\varphi_2\colon S_2\to T$ be generalized \CuMor{s}. Then,
 \begin{enumerate}
  \item if $\varphi_1$ has $\Cu (\mathcal{Z})$-multiplication, so has $\varphi_2\varphi_1$.
  \item if $\varphi_1$ is a \CuMor{} and $\varphi_2$ has $\Cu (\mathcal{Z})$-multiplication, the composition $\varphi_2\varphi_1$ also has $\Cu (\mathcal{Z})$-multiplication.
 \item if $S_2$ has $\Cu (\mathcal{Z})$-multiplication, so have both $\varphi_2$ and $\varphi_1$.
 \end{enumerate}
\end{prp}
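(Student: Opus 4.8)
The proof of \cref{prp:CompMult} should be a formal manipulation of the two conditions in \cref{dfn:ZMultPure}; the only thing to watch is which map in a composition is permitted to preserve just order and suprema and which one must preserve the way-below relation. For part~(1), assume $\varphi_1$ has $\Cu(\mathcal{Z})$-multiplication. If $(m+1)x\le my$ in $S_1$, almost unperforation of $\varphi_1$ gives $\varphi_1(x)\le\varphi_1(y)$, and the order-preserving map $\varphi_2$ yields $\varphi_2\varphi_1(x)\le\varphi_2\varphi_1(y)$. If $x'\ll x$ in $S_1$ and $k\in\NN$, almost divisibility of $\varphi_1$ produces $z\in S_2$ with $kz\le\varphi_1(x)$ and $\varphi_1(x')\le(k+1)z$; since $\varphi_2$ preserves order and addition, $\varphi_2(z)\in T$ witnesses almost divisibility of $\varphi_2\varphi_1$. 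For part~(2), assume instead that $\varphi_1$ is a \CuMor{} and $\varphi_2$ has $\Cu(\mathcal{Z})$-multiplication. Here one first transports the data forward through $\varphi_1$: from $(m+1)x\le my$ in $S_1$ one gets $(m+1)\varphi_1(x)\le m\varphi_1(y)$ in $S_2$, and from $x'\ll x$ one gets $\varphi_1(x')\ll\varphi_1(x)$ in $S_2$ --- this last implication is precisely where $\varphi_1$ must preserve $\ll$ --- and then almost unperforation, respectively almost divisibility, of $\varphi_2$ concludes. (The unperforation half of~(2) in fact only uses that $\varphi_1$ is a generalized \CuMor{}.)

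For part~(3), suppose $S_2$ has $\Cu(\mathcal{Z})$-multiplication, which by \cref{prp:idMult} means $S_2\cong S_2\otimes\Cu(\mathcal{Z})$ and, equivalently, that $S_2$ is almost unperforated and almost divisible. The claim for $\varphi_2$ is immediate: if $(m+1)x\le my$ in $S_2$, then already $x\le y$, hence $\varphi_2(x)\le\varphi_2(y)$; and a $\ll$-pair $w'\ll w$ in $S_2$ is divided inside $S_2$, after which one pushes the divisor forward through $\varphi_2$. For $\varphi_1$, almost unperforation is equally quick: $(m+1)x\le my$ in $S_1$ gives $(m+1)\varphi_1(x)\le m\varphi_1(y)$ in $S_2$, and $S_2$ almost unperforated gives $\varphi_1(x)\le\varphi_1(y)$.

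The one genuinely non-formal step, which I expect to be the main obstacle, is almost divisibility of $\varphi_1$ in~(3): given $x'\ll x$ in $S_1$ and $k\in\NN$ one cannot simply feed the pair $(\varphi_1(x'),\varphi_1(x))$ into the almost divisibility of $S_2$, because a generalized \CuMor{} need not preserve $\ll$, so $\varphi_1(x')\ll\varphi_1(x)$ may fail. I would get around this using the tensorial picture: fix the isomorphism $S_2\cong S_2\otimes\Cu(\mathcal{Z})$ of \cref{prp:idMult}, implemented by $s\mapsto s\otimes 1$ where $1$ is the unit of the Cu-semiring $\Cu(\mathcal{Z})$. Since $\Cu(\mathcal{Z})$ is almost divisible and $1\ll 1$ (it is the class of a projection), for each $k$ there is $d\in\Cu(\mathcal{Z})$ with $kd\le 1\le(k+1)d$; the element $z\in S_2$ corresponding to $\varphi_1(x)\otimes d$ then satisfies, by biadditivity and coordinatewise monotonicity of the tensor product, $kz\le\varphi_1(x)$ and $\varphi_1(x')\le\varphi_1(x)\le(k+1)z$, as required. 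Everything else is bookkeeping with \cref{dfn:ZMultPure}.
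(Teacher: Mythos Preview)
Your proof is correct and matches the paper's argument essentially verbatim for parts~(1) and~(2). For part~(3), the paper reaches the same conclusion by invoking \cite[Proposition~7.3.8]{AntPerThi18TensorProdCu}, which states that in a \CuSgp{} with $\Cu(\mathcal{Z})$-multiplication every element $x$ (not merely every $\ll$-pair) admits $y$ with $ky\le x\le(k+1)y$; your tensorial construction $z\leftrightarrow\varphi_1(x)\otimes d$ is precisely a direct derivation of that proposition in the special case needed, so the two arguments are the same idea with yours unpacking the citation.
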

\begin{proof}
 (1) Given $x'\ll x$ in $S_1$ and $k\in\NN$, there exists $z\in S_2$ such that $\varphi_1 (x')\leq (k+1)z$ and $kz\leq \varphi_1 (x)$. Thus, one has $\varphi_2\varphi_1 (x')\leq (k+1)\varphi_2 (z)$ and $k\varphi_2 (z)\leq \varphi_2\varphi_1 (x)$.
 
 Similarly, if $(m+1)x\leq my$ for some $m\in\NN$ in $S_1$, one gets $\varphi_1 (x)\leq \varphi_1 (y)$ and, consequently, $\varphi_2\varphi_1 (x)\leq \varphi_2\varphi_1 (y)$.
 
 (2) If $\varphi_1$ is now a \CuMor{} and $\varphi_2$ is the map that has $\Cu (\mathcal{Z})$-multiplication, one can take $x'\ll x$ in $S_1$ and consider the induced relation $\varphi_1 (x')\ll\varphi_1 (x)$ in $S_2$. Since $\varphi_2$ has $\Cu (\mathcal{Z})$-multiplication, one obtains the element needed for the almost division in $T$.
 
 The same argument shows that if $(m+1)x\leq my$ in $S$, then $\varphi_2\varphi_1 (x)\leq \varphi_2\varphi_1 (y)$ in $T$, as desired.

(3) If $S_2$ has $\Cu (\mathcal{Z})$-multiplication, \cite[Proposition~7.3.8]{AntPerThi18TensorProdCu} implies that for any element $x\in S_2$ and $k\in \mathbb N$, there exists $y\in S_2$ such that $ky\leq x\leq (k+1)y$. With this stronger property, it is routine to check that both $\varphi_1$ and $\varphi_2$ have $\Cu (\mathcal{Z})$-multiplication. 
\end{proof}

%==========================================================================================
The following lemma follows directly from standard model theoretic techniques applied to \CuSgp{s}; see, for example, \cite[Section~5]{ThiVil21DimCu2}.

\begin{lma}\label{prp:LSkMult}
 Let $\varphi\colon S\to T$ be a generalized \CuMor{}. Then $\varphi$ has $\Cu (\mathcal{Z})$-multiplication if and only if $\varphi\iota_{H}$ has $\Cu (\mathcal{Z})$-multiplication for every inclusion $\iota_H$ from a countably based sub-\CuSgp{} $H$ to $S$.
\end{lma}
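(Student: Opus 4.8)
The plan is to treat the two implications separately: the forward one is formal, and the backward one is a Löwenheim--Skolem reduction to the countably based situation. For the forward direction, suppose $\varphi$ has $\Cu(\mathcal Z)$-multiplication. Any inclusion $\iota_H$ of a countably based sub-\CuSgp{} $H\subseteq S$ is a \CuMor{}, so by \cref{prp:CompMult}(2) (applied with $\varphi_1=\iota_H$ and $\varphi_2=\varphi$) the composition $\varphi\iota_H$ again has $\Cu(\mathcal Z)$-multiplication. This can also be seen by hand: if $(m+1)x\leq my$ in $H$ then the same relation holds in $S$, so $\varphi(x)\leq\varphi(y)$; and if $x'\ll x$ in $H$ then $x'\ll x$ in $S$, whence the element $z\in T$ witnessing almost divisibility of $\varphi$ at this relation also witnesses it for $\varphi\iota_H$.

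For the backward direction, assume $\varphi\iota_H$ has $\Cu(\mathcal Z)$-multiplication for every countably based sub-\CuSgp{} $H\subseteq S$, and let us verify the two conditions of \cref{dfn:ZMultPure} for $\varphi$. To check almost divisibility, fix $x'\ll x$ in $S$ and $k\in\NN$. By the Löwenheim--Skolem theorem for \CuSgp{s} (see \cite[Section~5]{ThiVil21DimCu2}) there is a countably based sub-\CuSgp{} $H\subseteq S$ containing $x$ and $x'$ and whose inclusion $\iota_H$ is a \CuMor{}. As explained below, the relation $x'\ll x$ then also holds in $H$, so almost divisibility of $\varphi\iota_H$ applied to it supplies $z\in T$ with $kz\leq\varphi\iota_H(x)=\varphi(x)$ and $\varphi(x')=\varphi\iota_H(x')\leq(k+1)z$; hence $\varphi$ is almost divisible. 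For almost unperforation, given $x,y\in S$ with $(m+1)x\leq my$ one similarly picks, via Löwenheim--Skolem, a countably based sub-\CuSgp{} $H$ containing $x$ and $y$ with $\iota_H$ a \CuMor{}; the inequality persists in $H$ since the order of $H$ is the restriction of that of $S$, and almost unperforation of $\varphi\iota_H$ yields $\varphi(x)\leq\varphi(y)$. Thus $\varphi$ has $\Cu(\mathcal Z)$-multiplication.

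The one point deserving attention, and the only obstacle, is that $x'\ll x$ descends from $S$ to $H$ once $x,x'\in H$: if $(z_j)_j$ is an increasing sequence in $H$ whose supremum computed in $H$ is at least $x$, then, since $\iota_H$ preserves suprema of increasing sequences and the order of $H$ is inherited from $S$, the same supremum computed in $S$ is at least $x$, so $x'\leq z_j$ in $S$ — hence in $H$ — for some $j$. In other words, the inclusion of a sub-\CuSgp{} automatically reflects the $\ll$-relation, which is precisely what makes the reduction to the countably based case go through; beyond invoking the Löwenheim--Skolem machinery, no new estimate is required.
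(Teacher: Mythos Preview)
Your proof is correct and takes exactly the approach the paper intends: the paper does not give a detailed argument but simply declares that the lemma ``follows directly from standard model theoretic techniques applied to \CuSgp{s}; see, for example, \cite[Section~5]{ThiVil21DimCu2}.'' You have spelled out precisely this L\"owenheim--Skolem reduction, including the (standard but worth noting) observation that the inclusion of a sub-\CuSgp{} reflects the $\ll$-relation.
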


%==========================================================================================

Recall from \cite[Definition~3.4]{Gabe2020ANewProofKirchberg} that a pair of  \stHom{s} $\theta ,\eta\colon A\to B$ are \emph{approximately Murray-von Neumann equivalent} if, for any finite subset $\mathcal{F}$ of $A$ and any $\varepsilon >0$, there exists $u\in B$ such that 
\[
 \Vert u\theta (a) u^* -\eta (a) \Vert <\varepsilon, \andSep
  \Vert u\eta (a) u^* -\theta(a) \Vert <\varepsilon,
\]
for every $a\in\mathcal{F}$.

The following is essentially \cite[Corollary~3.11]{Gabe2020ANewProofKirchberg} applied to the category $\Cu$, with the only difference that we drop the assumption of $A$ being separable.

\begin{lma}\label{prp:ApproxMvNMult}
 Let $\theta ,\, \eta\colon A\to B$ be two approximately Murray-von Neumann equivalent \stHom{s}. Then, $\Cu (\theta )=\Cu (\eta )$.
\end{lma}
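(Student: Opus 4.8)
The plan is to reduce, via countable cofinality of the Cuntz semigroup, to the case where $A$ is separable, and then invoke Gabe's result \cite[Corollary~3.11]{Gabe2020ANewProofKirchberg} verbatim. More precisely, recall that every element of $\Cu(A)$ is the supremum of a $\ll$-increasing sequence coming from a single separable sub-\ca{} of $A\otimes\mathcal K$, and that $\Cu(\theta)$ and $\Cu(\eta)$ are determined by their values on such elements because both preserve suprema of increasing sequences. So it suffices to show $\Cu(\theta)(x)=\Cu(\eta)(x)$ for each fixed $x=[a]$ with $a\in (A\otimes\mathcal K)_+$; and we may even assume $a\in (A_0\otimes\mathcal K)_+$ for a separable sub-\ca{} $A_0\subseteq A$ containing a suitable countable dense family.

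First I would make the separability reduction explicit: fix $a\in (A\otimes\mathcal K)_+$, choose $A_0\subseteq A$ separable with $a\in(A_0\otimes\mathcal K)_+$, and let $\theta_0,\eta_0\colon A_0\to B$ be the restrictions. The defining approximation condition for approximate Murray--von Neumann equivalence, being a statement quantified over finite subsets $\mathcal F$ and $\varepsilon>0$, is inherited by the restrictions: for any finite $\mathcal F\subseteq A_0$ and any $\varepsilon>0$ there is $u\in B$ with $\Vert u\theta_0(a')u^*-\eta_0(a')\Vert<\varepsilon$ and $\Vert u\eta_0(a')u^*-\theta_0(a')\Vert<\varepsilon$ for all $a'\in\mathcal F$. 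Hence $\theta_0$ and $\eta_0$ are approximately Murray--von Neumann equivalent \stHom{s} with \emph{separable} domain, so \cite[Corollary~3.11]{Gabe2020ANewProofKirchberg} applies and yields $\Cu(\theta_0)=\Cu(\eta_0)$. Since $\Cu(\theta)([a])=[\theta(a)]=[\theta_0(a)]=\Cu(\theta_0)([a])$ and similarly for $\eta$, we get $\Cu(\theta)([a])=\Cu(\eta)([a])$; as $a$ was arbitrary and both morphisms preserve suprema, $\Cu(\theta)=\Cu(\eta)$ on all of $\Cu(A)$.

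For completeness I would sketch why the separable case holds, i.e.\ recall the mechanism of Gabe's argument, in case one prefers a self-contained proof: given $a,b\in (A\otimes\mathcal K)_+$ with $[a]\le[b]$ in $\Cu(A)$, pick $\varepsilon>0$ and $v\in A\otimes\mathcal K$ with $(a-\varepsilon)_+=vbv^*$; applying $\theta$ gives $(\theta(a)-\varepsilon)_+=\theta(v)\theta(b)\theta(v)^*$, so $[\theta((a-\varepsilon)_+)]\le[\theta(b)]$ in $\Cu(B)$. Now choose $u\in B$ implementing the Murray--von Neumann approximation on a finite set containing the relevant elements to within a small tolerance $\delta\ll\varepsilon$; then $u\theta((a-\varepsilon)_+)u^*$ is close to $\eta((a-\varepsilon)_+)$ and $u\theta(b)u^*$ is close to $\eta(b)$, and a standard perturbation estimate (of the type $\Vert x-y\Vert$ small implies $(x-\varepsilon')_+\precsim y$) upgrades $[\theta((a-\varepsilon)_+)]\le[\theta(b)]$ to $[\eta((a-2\varepsilon)_+)]\le[\eta(b)]$. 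Letting $\varepsilon\to 0$ and taking the supremum gives $\Cu(\eta)([a])\le\Cu(\eta)([b])$ whenever $[a]\le[b]$; applied to the case $[a]=[b]$ and with the roles of $\theta,\eta$ symmetric, and combined with the analogous inequality for $\theta$, this forces $\Cu(\theta)([a])=\Cu(\eta)([a])$.

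The main obstacle is essentially bookkeeping rather than a genuine difficulty: one must check that the restriction $\theta_0,\eta_0$ to a separable sub-\ca{} really does remain approximately Murray--von Neumann equivalent (immediate, since the condition only weakens under restricting the domain) and that invoking \cite[Corollary~3.11]{Gabe2020ANewProofKirchberg} is legitimate, which it is because that corollary only uses separability of the domain and places no hypothesis on $B$. The one subtle point to state carefully is that $\Cu(\theta)=\Cu(\eta)$ as a statement about all of $\Cu(A)$ follows from agreement on classes of elements of $A_0\otimes\mathcal K$ as $A_0$ ranges over separable sub-\ca{s}, which is exactly \axiomO{2} together with the fact that generalized \CuMor{s} preserve suprema of increasing sequences.
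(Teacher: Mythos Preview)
Your main argument is correct and is exactly the paper's approach: reduce to a separable sub-\ca{} and invoke \cite[Corollary~3.11]{Gabe2020ANewProofKirchberg}. The paper carries out the reduction via \cite[Proposition~6.1]{ThiVil21DimCu2} and explicitly records that $\Cu$ is $M_2$-stable and invariant under approximate unitary equivalence, which are the hypotheses Gabe's corollary requires of the functor; you leave these implicit, but they are standard.

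Your supplementary sketch of the separable case, however, does not work as written. The chain of inequalities you derive only shows $\Cu(\eta)([a])\leq\Cu(\eta)([b])$ whenever $[a]\leq[b]$, i.e.\ that $\Cu(\eta)$ is order-preserving, which is trivial and does not compare $\Cu(\theta)$ with $\Cu(\eta)$ at all. The correct direct argument applies the approximating element $u$ to a single $a\in M_n(A)_+$: from $\Vert u\theta(a)u^*-\eta(a)\Vert<\varepsilon$ one gets $(\eta(a)-\varepsilon)_+\precsim u\theta(a)u^*\precsim\theta(a)$ (the second subequivalence because $u\theta(a)u^*\sim\theta(a)^{1/2}u^*u\theta(a)^{1/2}\leq\Vert u\Vert^2\theta(a)$), hence $[\eta(a)]\leq[\theta(a)]$; by symmetry equality holds, and one passes to $(A\otimes\mathcal{K})_+$ by approximating with finite matrices. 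Since this sketch is inessential to your proposal, the proof stands without it.
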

\begin{proof} 
Let $x\in \Cu (A)$. By \cite[Proposition~6.1]{ThiVil21DimCu2}, there exists a separable sub-\ca{} $A'\subseteq A$ such that $\Cu (\iota_{A'})(\Cu (A'))$ is a sub-\CuSgp{} of $\Cu (A)$ containing $x$. Here, $\iota_{A'}$ denotes the inclusion from $A'$ to $A$.

Since the functor $\Cu (\cdot )$ is $M_2$-stable and invariant under approximate unitary equivalence, \cite[Corollary~3.11]{Gabe2020ANewProofKirchberg} implies that 
\[
 \Cu (\theta)(x) = \Cu (\theta \iota_{A'})(x)=\Cu (\eta \iota_{A'})(x) = \Cu (\eta )(x),
\]
as required.
\end{proof}

%==========================================================================================
The following result gives one of the implications of \cref{qst:Pure1}.

\begin{prp}\label{prp:OinftyMultGabe}
 Let $\theta\colon A\to B$ be a \stHom{}. Assume that, for any separable sub-\ca{} $A'\subseteq A$, there exist cpc. order-zero maps $\eta\colon A'\to C$ and $\rho\colon C\to B$ such that $C$ is pure and $\rho\eta$ is approximately Murray-von Neumann equivalent to $\theta\iota_{A'}$.
 
 Then, $\theta$ is pure.
\end{prp}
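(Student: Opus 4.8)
The plan is to reduce to the situation of a map with pure domain or codomain by combining the localization lemma \cref{prp:LSkMult} (or rather its C*-algebraic counterpart via separable subalgebras) with the permanence properties of \cref{prp:CompMult}, and then to use the invariance under approximate Murray--von Neumann equivalence from \cref{prp:ApproxMvNMult}. First I would recall that, by \cref{prp:LSkMult}, to show that $\Cu(\theta)$ has $\Cu(\mathcal{Z})$-multiplication it suffices to check that $\Cu(\theta)\iota_H$ has $\Cu(\mathcal{Z})$-multiplication for every countably based sub-\CuSgp{} $H$ of $\Cu(A)$. Using \cite[Proposition~6.1]{ThiVil21DimCu2} as in the proof of \cref{prp:ApproxMvNMult}, any such $H$ is contained in $\Cu(\iota_{A'})(\Cu(A'))$ for some separable sub-\ca{} $A'\subseteq A$; hence it is enough to prove that $\Cu(\theta\iota_{A'})\colon \Cu(A')\to\Cu(B)$ has $\Cu(\mathcal{Z})$-multiplication for every separable $A'\subseteq A$.

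Fix such an $A'$, and take the cpc.~order-zero maps $\eta\colon A'\to C$ and $\rho\colon C\to B$ provided by hypothesis, with $C$ pure and $\rho\eta$ approximately Murray--von Neumann equivalent to $\theta\iota_{A'}$. (The definition of approximate Murray--von Neumann equivalence, and hence \cref{prp:ApproxMvNMult}, is stated for \stHom{s}; I would either invoke it directly if $\eta,\rho,\theta$ are \stHom{s} in the case of interest, or note that the same argument via the $M_2$-stability and approximate-unitary-invariance of $\Cu(\cdot)$ on cpc.~order-zero maps gives $\Cu(\rho\eta)=\Cu(\theta\iota_{A'})$ in general.) Thus $\Cu(\theta\iota_{A'})=\Cu(\rho)\Cu(\eta)$. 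Since $C$ is pure, $\Cu(C)$ has $\Cu(\mathcal{Z})$-multiplication by \cref{pgr:PurenessCuSgp}, and therefore \cref{prp:CompMult}(3) shows that the generalized \CuMor{} $\Cu(\eta)\colon \Cu(A')\to\Cu(C)$ has $\Cu(\mathcal{Z})$-multiplication. Applying \cref{prp:CompMult}(1) to the composition $\Cu(\rho)\Cu(\eta)$, we conclude that $\Cu(\theta\iota_{A'})$ has $\Cu(\mathcal{Z})$-multiplication.

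Since $A'$ was an arbitrary separable sub-\ca{} of $A$, the reduction in the first paragraph gives that $\Cu(\theta)$ has $\Cu(\mathcal{Z})$-multiplication, i.e.\ $\theta$ is pure. The main subtlety I anticipate is bookkeeping the model-theoretic reduction correctly: one must make sure the sub-\CuSgp{} witnessing $\Cu(\mathcal{Z})$-multiplication for $\Cu(\theta)$ really does factor through a single separable $A'$, and that \cref{prp:LSkMult} is applied to the honest domain $\Cu(A)$ rather than to some intermediate object; everything else is a direct chaining of the permanence statements already established, so no genuinely new estimate is needed.
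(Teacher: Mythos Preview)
Your proposal is correct and follows essentially the same route as the paper's proof: reduce via \cref{prp:LSkMult} and \cite[Proposition~6.1]{ThiVil21DimCu2} to the separable case, then use \cref{prp:ApproxMvNMult} to identify $\Cu(\theta\iota_{A'})$ with $\Cu(\rho)\Cu(\eta)$, and conclude with \cref{prp:CompMult}. The only differences are organizational: the paper treats the separable case first and then bootstraps, whereas you go directly to the general reduction; and you are more explicit about invoking parts~(3) and~(1) of \cref{prp:CompMult}, while the paper cites it globally. Your flagged subtleties (the cpc.\ order-zero vs.\ \stHom{} hypothesis in \cref{prp:ApproxMvNMult}, and the bookkeeping in passing from $\Cu(\theta\iota_{A'})$ back to $\Cu(\theta)\iota_H$) are genuine and are handled in the paper at the same level of detail you propose.
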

\begin{proof}
 Assume first that $A$ is separable. Then, $\theta$ is Murray-von Neumann equivalent to a map that factorizes through $C$. Thus, by \cref{prp:ApproxMvNMult}, $\Cu (\theta )$ itself factorizes through $\Cu (C)$, and so $\theta$ is pure by \cref{prp:CompMult}.
 
 If $A$ is not separable, we know from \cite[Proposition~6.1]{ThiVil21DimCu2} that any countably based sub-\CuSgp{} $H$ in $\Cu (A)$ is contained in $\Cu (A')$ for some separable sub-\ca{} $A'$ of $A$. Thus, the  argument above shows that $\Cu (\theta \iota_{A'})$ has $\Cu (\mathcal{Z})$-multiplication.
 
 Consequently, since any inclusion gives rise to a \CuMor{},  \cref{prp:CompMult} shows that $\Cu (\varphi)\iota_H$ has $\Cu (\mathcal{Z})$-multiplication. \cref{prp:LSkMult} gives the required result.
\end{proof}

%==========================================================================================
\begin{pgr}[Approximations of \stHom{s}]
Recall that a \ca{} $A$ is said to be \emph{approximated} by a family of sub-\ca{s}
$(A_\lambda )_{\lambda\in\Lambda}$ if, for each $\varepsilon > 0$ and every choice of finitely many elements $a_1,\ldots ,a_n\in A$, there exist $\lambda\in\Lambda$ and $b_1,\ldots ,b_n\in A_\lambda$ such that $\Vert b_j-a_j\Vert <\varepsilon$ for each $j$.

Let $\theta\colon A\to B$ be a cpc. order-zero map. We will say that a tuple $(A_\lambda , \theta_\lambda\colon A_\lambda\to B )_{\lambda\in\Lambda}$ \emph{approximates} $\theta$ if each $\theta_\lambda$ is cpc. order-zero and the following condition holds:

For every $\varepsilon > 0$ and every finitely tuple $a_1,\ldots ,a_n\in A$, there exist $\lambda\in\Lambda$ and $b_1,\ldots ,b_n\in A_\lambda$ such that
\[
	\Vert b_j-a_j\Vert <\varepsilon ,\andSep
	\Vert \theta_\lambda (b_j)-\theta (a_j)\Vert <\varepsilon
\]
for each $j$.

Note that this notion of approximation naturally includes the notion of limit morphism (ie. when $A_\lambda = A$ for each $\lambda$).
\end{pgr}

We will now show that pureness is preserved under approximations. We do this by proving a much more general result, which we expect to find other uses elsewhere. Informally, \cref{prp:ApproxCuMorph} below says that any formula of the Cuntz semigroup is inherited by the approximated map. This generalizes \cite[Proposition~3.7]{ThiVil21DimCu2}.

\begin{prp}\label{prp:ApproxCuMorph}
Let $\theta\colon A\to B$ be a cpc. order-zero map, and let $(A_\lambda , \theta_\lambda )_{\lambda\in\Lambda}$ approximate $\theta$. Then, for any finite sets $J,K$, any family of pairs $[a_j'],[a_j]\in \Cu (A)$ such that $[a_j']\ll [a_j]$ for each $j\in J$, and any functions $m_k,n_k\colon J\to \NN$ such that 
\[
	\sum_{j \in J}m_k (j) [a_j] \ll \sum_{j\in J} n_k (j)  [a_j']
\]
for all $k\in K$, there exists $\lambda\in \Lambda$, and $c_j\in (A_\lambda\otimes\mathcal{K})_\lambda$ for each $j$, such that $[\theta (a_j' )]\ll [\theta_\lambda (c_j)]\ll [\theta (a_j)]$, and $[a_j' ]\ll [c_j]\ll [a_j]$ in $\Cu (A)$, and 
\[
	\sum_{j \in J}m_k (j) [c_j] \ll \sum_{j\in J} n_k (j)  [c_j]
\]
in $\Cu (A_\lambda)$.
\end{prp}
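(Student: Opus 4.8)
The plan is to encode the whole finite configuration of Cuntz relations among the $[a_j']$, $[a_j]$ and the sums $\sum_j m_k(j)[a_j]$, $\sum_j n_k(j)[a_j']$ as a \emph{finite} list of norm‑identities between continuous functional calculi of finitely many positive elements of $A$ together with finitely many Cuntz witnesses, and then to transport this list ---element by element--- through the approximating data $(A_\lambda,\theta_\lambda)$. Perturbation stability of $\precsim$ and $\ll$ then guarantees that the slightly perturbed list still certifies the desired relations, now inside $A_\lambda$ and with $\theta$ replaced by $\theta_\lambda$; this is the mechanism behind \cite[Proposition~3.7]{ThiVil21DimCu2}, the extra point being that one must simultaneously keep track of relations holding in $\Cu(A)$, in $\Cu(A_\lambda)$ and in $\Cu(B)$. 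First I would reduce, using $M_\infty$‑stability of $\Cu(\cdot)$ and the fact that the approximation hypothesis passes to matrix amplifications (a finite matrix over $A$ has finitely many entries, which one approximates in $A_\lambda$ and reassembles, and likewise for its $\theta$‑image), to the case where all $a_j',a_j$ lie in a common $M_p(A)_+$; the sought $c_j$ are then allowed in $(A_\lambda\otimes\mathcal{K})_+$.

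Using \axiomO{2} and \axiomO{4} I would fix a single $\varepsilon>0$ with $a_j'\precsim(a_j-2\varepsilon)_+$ for all $j$ and $\bigoplus_j m_k(j)a_j\precsim\bigoplus_j n_k(j)(a_j'-2\varepsilon)_+$ for all $k$, and pass to $d_j:=(a_j-\varepsilon/4)_+$ and $e_j:=(a_j'-\varepsilon)_+$, so that by standard perturbation lemmas for $\precsim$ one has $[a_j']\ll[d_j]\ll[a_j]$, $[e_j]\ll[d_j]$, and $\bigoplus_j m_k(j)d_j\precsim\bigoplus_j n_k(j)e_j$. By R{\o}rdam's realization lemma (cf.\ \cite[Proposition~2.4]{Ror92StructureUHF2}), for a small tolerance $\tau>0$ there are matrices $R_k,S_j,T_j$ over $A$ with $\bigl(\bigoplus_j m_k(j)d_j-\tau\bigr)_+=R_k\bigl(\bigoplus_j n_k(j)e_j\bigr)R_k^*$, $(a_j'-\tau)_+=S_j(a_j-2\varepsilon)_+S_j^*$ and $(e_j-\tau)_+=T_jd_jT_j^*$. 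I would then apply the approximation hypothesis to the finite set $\mathcal F$ consisting of all positive elements and cut‑downs appearing above together with all entries of $R_k,S_j,T_j$, with a tolerance $\gamma$ chosen very small relative to $\varepsilon$ and $\tau$, obtaining $\lambda\in\Lambda$ and, for each $x\in\mathcal F$, an approximant $\hat x$ over $A_\lambda$ with $\|\hat x-x\|<\gamma$ and $\|\theta_\lambda(\hat x)-\theta(x)\|<\gamma$ (and $\hat x\ge 0$ whenever $x\ge 0$).

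Setting $c_j:=(\widehat{d_j}-2\gamma)_+$ and $c_j':=(\widehat{e_j}-2\gamma)_+$ over $A_\lambda$, three verifications remain. In $\Cu(A)$: since $\widehat{d_j}$ is $\gamma$‑close to $(a_j-\varepsilon/4)_+$, perturbation gives $c_j\precsim(a_j-\delta')_+$ for some $\delta'>0$, so $[c_j]\ll[a_j]$, while $a_j'\precsim(a_j-2\varepsilon)_+\precsim(c_j-\delta)_+$ for some $\delta>0$ gives $[a_j']\ll[c_j]$. In $\Cu(A_\lambda)$: feeding the $\hat x$'s into the matrix identities above, which now hold up to norm $O(\gamma)$ and hence exactly after a further cut‑down absorbed into a slightly smaller $\tau$, yields $\bigoplus_j m_k(j)c_j\precsim\bigoplus_j n_k(j)c_j'$ in $A_\lambda\otimes\mathcal{K}$ and $[c_j']\ll[c_j]$ in $\Cu(A_\lambda)$; iterating \axiomO{3} one concludes $\sum_j m_k(j)[c_j]\le\sum_j n_k(j)[c_j']\ll\sum_j n_k(j)[c_j]$ in $\Cu(A_\lambda)$. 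In $\Cu(B)$: running the very same perturbation estimates with the $\theta$‑ and $\theta_\lambda$‑images, now exploiting $\|\theta_\lambda(\hat x)-\theta(x)\|<\gamma$ on the relevant elements, gives $[\theta(a_j')]\ll[\theta_\lambda(c_j)]\ll[\theta(a_j)]$.

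The part I expect to be the main obstacle is exactly the simultaneity of these three layers. Because the inclusion $\Cu(A_\lambda)\hookrightarrow\Cu(A)$ need not be an order‑embedding and $\Cu(\theta_\lambda)$, being induced only by a cpc.\ order‑zero map, need not preserve $\ll$, one cannot transfer the relations one at a time: the entire finitary configuration, witnesses included, must be pushed through a single use of the approximation property and then every relation re‑derived from the perturbed data. Keeping the tolerances $\gamma,\tau$ and the sizes of the successive cut‑downs consistent across the $\Cu(A)$‑, $\Cu(A_\lambda)$‑ and $\Cu(B)$‑sides is the bulk of the (routine but delicate) bookkeeping; on the $\Cu(B)$‑side one must in addition have arranged the list $\mathcal F$ so that the approximation controls $\theta_\lambda$ on all the cut‑downs entering the witnesses, not merely on the original $a_j$.
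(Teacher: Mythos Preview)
Your proposal is correct and follows essentially the same approach as the paper. The paper's proof is more compressed: it quotes (the proof of) \cite[Proposition~3.7]{ThiVil21DimCu2} as a black box for the $\Cu(A)$- and $\Cu(A_\lambda)$-layers, observing only that the same single application of the approximation hypothesis can be arranged to additionally yield $\|\theta_\lambda(b_j)-\theta(a_j)\|\le\sigma<\varepsilon$, and then derives the $\Cu(B)$-relations directly from this norm estimate together with $c_j:=(b_j-\varepsilon)_+$; your write-up unpacks the cited argument into explicit R{\o}rdam witnesses $R_k,S_j,T_j$ and auxiliary cut-downs $d_j,e_j$, but the mechanism---push one finite list of elements and witnesses through a single use of the approximation, then re-verify all three layers by perturbation---is identical.
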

\begin{proof}
Let $\varepsilon >0$ be such that $[a_j']\leq [(a_j-2\varepsilon )_+]$ for each $j$. Note that, by definition, the $A_\lambda$'s approximate $A$. Thus, it follows from (the proof of)  \cite[Proposition~3.7]{ThiVil21DimCu2} that, for every sufficiently small positive $\sigma>0$ with $\sigma<\varepsilon$, one can find $\lambda\in \Lambda$ and $b_j\in (A_\lambda\otimes\mathcal{K})_\lambda$ such that $[a_j' ]\ll [(b_j-\varepsilon )_+]\ll [a_j]$ in $\Cu (A)$ and 
\[
	\sum_{j \in J}m_k (j) [(b_j-\varepsilon )_+] \ll \sum_{j\in J} n_k (j)  [(b_j-\varepsilon )_+]
\]
in $\Cu (A_\lambda)$ for every $k$ and, additionally, such that $\Vert \theta_\lambda (b_j)-\theta (a_j)\Vert\leq\sigma$ for every $j$.

Set $c_j:= (b_j-\varepsilon )_+$. Then, since $\sigma <\varepsilon$, it follows that $\theta ((a_j-2\varepsilon )_+)\precsim \theta_\lambda (c_j)\precsim \theta (a_j)$. Using that $\theta$ is cpc. order zero, we see that $\theta (a_j')\precsim \theta ((a_j-2\varepsilon )_+)$ and, consequently, $\theta (a_j')\precsim \theta_\lambda (c_j)$.
\end{proof}

\begin{cor}
Let $\theta\colon A\to B$ be a cpc. order-zero map, and let $(A_\lambda , \theta_\lambda )_{\lambda\in\Lambda}$ approximate $\theta$. Assume that $\theta_\lambda$ is pure for each $\lambda$. Then, $\theta$ is pure.
\end{cor}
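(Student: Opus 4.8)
The plan is to reduce the statement to the combination of \cref{prp:ApproxCuMorph} and the defining conditions of $\Cu(\mathcal{Z})$-multiplication from \cref{dfn:ZMultPure}. We must show that $\Cu(\theta)$ is both almost divisible and almost unperforated, given that each $\Cu(\theta_\lambda)$ has these properties.

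First I would verify almost divisibility. Fix $k\in\NN$ and $x'\ll x$ in $\Cu(A)$; since $\Cu(A)$ is a \CuSgp{}, write $x=\sup_n x_n$ for a $\ll$-increasing sequence, so that $x'\leq x_n$ for some $n$, and hence (after interpolating) we may assume $x',x$ are both represented by elements of $M_\infty(A)_+$ and $x'\ll x$. We are in the setting of \cref{prp:ApproxCuMorph} with $J=\{\ast\}$ a singleton, $[a'_\ast]=x'$, $[a_\ast]=x$, and an empty family $K$ (no summation conditions). This yields $\lambda\in\Lambda$ and $c\in(A_\lambda\otimes\mathcal{K})_\lambda$ with $[\theta(a'_\ast)]\ll[\theta_\lambda(c)]\ll[\theta(a_\ast)]$ and $x'\ll[c]\ll x$ in $\Cu(A)$. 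Now apply almost divisibility of $\Cu(\theta_\lambda)$ to the relation $[(c-\delta)_+]\ll[c]$ in $\Cu(A_\lambda)$ for a suitable cut-down: there is $z\in\Cu(B)$ with $kz\leq[\theta_\lambda(c)]$ and $[\theta_\lambda((c-\delta)_+)]\leq(k+1)z$. Chaining with the $\Cu(B)$-inequalities $[\theta_\lambda(c)]\leq[\theta(a_\ast)]=x$-side and $\Cu(\theta)(x')=[\theta(a'_\ast)]\leq[\theta_\lambda(c)]$, and choosing $\delta$ small enough that $[\theta(a'_\ast)]\leq[\theta_\lambda((c-\delta)_+)]$, we obtain $kz\leq\Cu(\theta)(x)$ and $\Cu(\theta)(x')\leq(k+1)z$, as required. (One small point: $\Cu(\theta_\lambda)$ is a generalized \CuMor{} with $\Cu(\mathcal{Z})$-multiplication, so almost divisibility applies to any $\ll$-pair in $\Cu(A_\lambda)$; the inclusion $(A_\lambda\otimes\mathcal{K})_\lambda \hookrightarrow A_\lambda\otimes\mathcal{K}$ and the identification $\Cu(A_\lambda\otimes\mathcal{K})\cong\Cu(A_\lambda)$ are harmless here.)

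Next I would handle almost unperforation, which is cleaner since no $\ll$-pair is needed in the hypothesis. Suppose $(m+1)x\leq my$ in $\Cu(A)$ for some $m\in\NN$; we must show $\Cu(\theta)(x)\leq\Cu(\theta)(y)$. Since $\Cu(\theta)$ preserves suprema, it suffices to prove $\Cu(\theta)(x')\leq\Cu(\theta)(y)$ for every $x'\ll x$. Fix such an $x'$, and pick $y'\ll y$ with $(m+1)x'\leq my'$ (possible by \axiomO{3}-type interpolation, approximating $x,y$ from below). Represent $x',y'$ by elements of $M_\infty(A)_+$ and invoke \cref{prp:ApproxCuMorph} with $J=\{1,2\}$, the pairs $(x',x),(y',y)$, and the single condition $(m+1)[a'_1]\ll(m+1)[a_1]\leq m[a'_2]$ — more precisely, choosing the cut-downs so that $(m+1)[c_1]\leq m[c_2]$ holds in $\Cu(A_\lambda)$ together with $[\theta(a'_j)]\ll[\theta_\lambda(c_j)]\ll[\theta(a_j)]$. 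Applying almost unperforation of $\Cu(\theta_\lambda)$ gives $[\theta_\lambda(c_1)]\leq[\theta_\lambda(c_2)]$, and then $\Cu(\theta)(x')=[\theta(a'_1)]\leq[\theta_\lambda(c_1)]\leq[\theta_\lambda(c_2)]\leq[\theta(a_2)]\leq\Cu(\theta)(y)$. Taking the supremum over $x'\ll x$ finishes the argument.

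The main obstacle is purely bookkeeping: matching the exact form of the summation inequality $\sum_j m_k(j)[a_j]\ll\sum_j n_k(j)[a'_j]$ required in \cref{prp:ApproxCuMorph} with the simple inequality $(m+1)x\leq my$, and choosing all the cut-down parameters ($\varepsilon$, $\sigma$, $\delta$) so that the transported relations in $\Cu(A_\lambda)$ are genuinely $\ll$ and the pulled-back relations in $\Cu(B)$ squeeze $\Cu(\theta)(x')$ and $\Cu(\theta)(x)$ appropriately. None of this is conceptually hard — it is the standard ``approximate from below, transport, chain the inequalities'' routine — and once the indices are set up correctly, both halves fall out immediately. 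I would present the corollary with a short proof that simply cites \cref{prp:ApproxCuMorph} for each of the two conditions and leaves the (routine) parameter juggling to the reader, in the spirit of the surrounding text.
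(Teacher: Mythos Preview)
Your proposal is correct and follows essentially the same route as the paper's proof: invoke \cref{prp:ApproxCuMorph} with $K=\emptyset$ for almost divisibility and with a single summation constraint for almost unperforation, then chain the resulting inequalities in $\Cu(B)$. The only refinement the paper makes explicit is that, for unperforation, the first pair is taken to be $(x',x'')$ for an intermediate $x'\ll x''\ll x$ rather than $(x',x)$, so that the hypothesis $\sum m_k(j)[a_j]\ll\sum n_k(j)[a'_j]$ of \cref{prp:ApproxCuMorph} can actually be arranged as $(m+1)[x'']\ll m[y']$; this is precisely the bookkeeping wrinkle you flagged.
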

\begin{proof}
First, let $n\in\NN$. Given $[a]\in\Cu (A)$ and $\varepsilon >0$, use \cref{prp:ApproxCuMorph} (with $K=\emptyset $) to find $\lambda\in\Lambda$ and $c\in (A_\lambda\otimes\mathcal{K})_+$ such that $[\theta ((a-\varepsilon)_+ )]\ll [\theta_\lambda (c)]\ll [\theta (a)]$ and $[(a-\varepsilon)_+ ]\ll [c]\ll [a]$. Take $\delta >0$ such that $[\theta((a-\varepsilon)_+) ]\leq [\theta_\lambda ((c-\delta )_+)]$. Then, since $\theta_\lambda$ is pure, there exists $[d]\in\Cu (B)$ such that $n[d]\leq [\theta_\lambda (c)]$ and $[\theta_\lambda((c-\delta )_+)]\leq (n+1)[d]$ in $\Cu (B)$. This implies
\[
n[d]\leq [\theta_\lambda (c)]\leq [\theta (a)]
,\andSep 
[\theta ((a-\varepsilon)_+)] \leq [\theta_\lambda((c-\delta )_+)]\leq (n+1)[d]
,
\]
which shows that $\theta$ is almost divisible.

Now assume that $[a_1],[a_2]\in \Cu (A)$ are such that $(m+1)[a_1]\leq m[a_2]$ for some $m\in\NN$. Take any pair of elements $a_1', a_1''$ such that $[a_1']\ll [a_1'']\ll [a_1]$, and find $a_2'$ such that $[a_2']\ll [a_2]$ and $(m+1)[a_1'']\ll m[a_2']$. Apply \cref{prp:ApproxCuMorph} for the pairs $a_1',a_1''$ and $a_2',a_2$ and the formula $(m+1)[a_1'']\ll m[a_2']$ to find $\lambda\in\Lambda$ and $c_1,c_2\in (A_\lambda\otimes\mathcal{K})_+$ such that
\[
	[\theta (a_1')]\ll [\theta_\lambda (c_1)] \ll [\theta (a_1'')],\quad 
	[\theta (a_2')]\ll [\theta_\lambda (c_2)] \ll [\theta (a_2)]
\]
in $\Cu (A)$ and $(m+1)[c_1]\ll m[c_2]$ in $\Cu (A_\lambda)$.

Thus, since $\Cu (\theta_\lambda )$ is almost unperforated, $[\theta_\lambda (c_1)]\leq [\theta_\lambda (c_2)]$. This shows that $[\theta (a_1')]\leq [\theta (a_2)]$ and, since the choice of $a_1'$ was arbitrary, we obtain $[\theta (a_1)]\leq [\theta (a_2)]$.
\end{proof}

%==========================================================================================
%==========================================================================================
%==========================================================================================

\section{\texorpdfstring{Factorizing compositions of pure \stHom{s}}{Factorizing compositions of pure *-homomorphisms}}\label{subsec:McDuffness}

The aim of this section is to provide a partial answer to \cref{qst:Pure2}. Namely, we show that ---at a Cuntz semigroup level--- the composition of any two pure \stHom{s} (in fact, two cpc. order-zero maps) factors through a pure \CuSgp{}; see \cref{thm:MainCuA}. In order to prove such a result, we start with a study of pureness in the category $\Cu$, which we use to prove the technical result \cref{prp:McDuffZ}.

Recall that the Cuntz semigroup of the Jiang-Su algebra $\mathcal{Z}$ is  isomorphic to $Z=\NN \cup (0,\infty]$(see eg. \cite{AntPerThi18TensorProdCu}).  Throughout this subsection, we will write $Z$ as the union $Z=Z_c\cup Z_\soft$, where $Z_c=\NN$ and $Z_\soft = [0,\infty ]$ with $Z_c\cap Z_\soft =\{ 0\}$. Denote by $\sigma\colon Z\to [0,\infty ]$ the soft retraction, that is, the map that sends each compact element $n\in Z_c$ to its soft counterpart $n\in Z_{\soft }=[0,\infty ]$ and leaves the soft part invariant.

\begin{lma}\label{prp:ExtensionZtoW}
Let $S$ be a \CuSgp{}, and let $\gamma\colon Z\to S$ be a map such that $\gamma|_{Z_c}$ is an order-preserving monoid morphism. Then, $\gamma$ is a generalized \CuMor{} if and only if the following two conditions are satisfied:
\begin{itemize}
 \item[(i)] $\gamma (\sigma (1))\leq \gamma (1)\leq \gamma (1+\varepsilon )$ for every $\varepsilon >0$; and
 \item[(ii)] $\gamma|_{Z_\soft}$ is a generalized \CuMor{}.
\end{itemize}
\end{lma}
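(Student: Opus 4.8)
The plan is to verify the three defining properties of a generalized \CuMor{} for $\gamma$ by hand, splitting every instance into the cases $Z_c$--$Z_c$, $Z_\soft$--$Z_\soft$, and ``mixed''. Write $e:=\gamma(1)\in S$, so that the hypothesis on $Z_c$ forces $\gamma(n)=ne$ for all $n\in Z_c$, and write $\psi:=\gamma|_{Z_\soft}\colon[0,\infty]\to S$. I will use the standard description of $Z=\Cu(\mathcal{Z})$: the order on $Z_c$ and on $Z_\soft$ is the usual one; for $n\in Z_c$ with $n\geq 1$ and $t\in Z_\soft$ one has $n\leq t$ in $Z$ iff $n<t$, and $t\leq n$ in $Z$ iff $t\leq n$; and $n+t$ (which equals $\sigma(n)+t$) is a soft element whenever $t>0$. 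Under this dictionary, condition~(i) says precisely that $\gamma$ respects the order relations $\sigma(1)\leq 1\leq 1+\varepsilon$ holding in $Z$, while condition~(ii) says that $\gamma$ is additive, monotone and sup-continuous on the soft part.

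The forward implication is immediate: addition, order, and suprema of increasing sequences of soft elements are computed inside $Z$ exactly as in $[0,\infty]$ (in particular $\sup_k t_k\in[0,\infty]$ coincides with the supremum in $Z$ of the soft elements $t_k$), so (ii) holds, and (i) holds since $\sigma(1)\leq 1\leq 1+\varepsilon$ already in $Z$. For the converse, assume (i) and (ii). Additivity and monotonicity on $Z_c$ are the hypothesis, and on $Z_\soft$ they are (ii). For the mixed order relations: if $n<t$ with $n\geq 1$, pick $\varepsilon>0$ with $n(1+\varepsilon)\leq t$; then $\gamma(n)=ne\leq n\psi(1+\varepsilon)=\psi(n(1+\varepsilon))\leq\psi(t)=\gamma(t)$ by (i); and if $t\leq n$, then $\gamma(t)=\psi(t)\leq\psi(n)=n\psi(1)\leq ne=\gamma(n)$, using $\psi(1)\leq e$ from (i).

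The step I expect to be the main obstacle is mixed additivity, namely $\psi(n+t)=ne+\psi(t)$ for $n\geq 1$ and $0<t\leq\infty$ (recall $n+t$ is soft). One inequality is free: $\psi(n+t)=n\psi(1)+\psi(t)\leq ne+\psi(t)$ by $\psi(1)\leq e$. For the reverse inequality, when $t=\infty$ we have $ne\leq\psi(2n)\leq\psi(\infty)$ by (i), hence $ne+\psi(\infty)\leq\psi(\infty)+\psi(\infty)=\psi(\infty)$. When $t<\infty$ the idea is to first create a little room below $t$ and then absorb the slack in condition~(i): by \axiomO{4} of $S$, $ne+\psi(t)=\sup_k\bigl(ne+\psi(t(1-1/k))\bigr)$, and for each $k$ one can choose $\varepsilon_k>0$ with $n+n\varepsilon_k+t(1-1/k)\leq n+t$, so that $ne\leq\psi(n(1+\varepsilon_k))$ gives $ne+\psi(t(1-1/k))\leq\psi\bigl(n+n\varepsilon_k+t(1-1/k)\bigr)\leq\psi(n+t)$; taking the supremum over $k$ yields $ne+\psi(t)\leq\psi(n+t)$.

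It remains to verify that $\gamma$ preserves suprema of increasing sequences $(x_k)$ in $Z$; since $\gamma$ is by now monotone, $(\gamma(x_k))$ is increasing. If $(x_k)$ is eventually constant there is nothing to do. If it is eventually compact and unbounded, say $x_k=m_k\to\infty$ for $k$ large, then $\sup_k x_k=\infty$ in $Z_\soft$, and the squeeze $m_k\psi(1)\leq m_ke\leq\psi(m_k(1+\varepsilon))$ coming from (i), together with (ii), gives $\psi(\infty)=\sup_k m_k\psi(1)\leq\sup_k m_ke\leq\sup_k\psi(m_k(1+\varepsilon))=\psi(\infty)$, so $\gamma(\sup_k x_k)=\psi(\infty)=\sup_k\gamma(x_k)$. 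Otherwise $(x_k)$ has a cofinal subsequence of soft elements $(t_{k_j})_j$; then $\sup_k x_k=\sup_j t_{k_j}\in Z_\soft$, and since suprema are unchanged under passing to cofinal subsequences, $\gamma(\sup_k x_k)=\psi(\sup_j t_{k_j})=\sup_j\psi(t_{k_j})=\sup_k\gamma(x_k)$ by (ii). This exhausts all cases. The only non-formal ingredients are condition~(i) and axiom \axiomO{4} of the codomain, used exactly in the mixed-additivity step and in the unbounded-compact case of sup-preservation.
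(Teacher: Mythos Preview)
Your proof is correct and follows essentially the same approach as the paper's own argument: both verify order, additivity, and sup-preservation by splitting into the compact--compact, soft--soft, and mixed cases, using condition~(i) as the bridge between the two parts and the same ``create $\varepsilon$ of room and absorb it'' trick for the nontrivial direction of mixed additivity. Your treatment is slightly more explicit in places (separating out $t=\infty$, invoking \axiomO{4} by name), but the underlying ideas and the structure of the argument are identical.
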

\begin{proof}
The forward implication is trivial. For the reverse implication, assume that $\gamma|_{Z_\soft}$ is a generalized \CuMor{} and that $\gamma (\sigma (1))\leq \gamma (1)\leq \gamma (1+\varepsilon )$ for each $\varepsilon >0$. Note, in particular, that we have $\gamma (\sigma (n))\leq \gamma (n)\leq \gamma (n+\varepsilon )$ for each $n\in Z_c$. To show that $\gamma$ is order-preserving, take $n\in Z_c$ and $t\in Z_\soft$ such that $t\leq n$. Then, one has $t\leq \sigma (n)$ and, consequently, $\gamma (t)\leq \gamma (\sigma (n))\leq \gamma (n)$. Conversely, if $n\leq t$, we know that $\sigma (n)<t$ in $Z_\soft$. Let $\varepsilon>0$ be such that $\sigma (n)+\varepsilon <t$. Then, 
\[
	\gamma (n)\leq \gamma (n+\varepsilon )=\gamma (\sigma (n)+\varepsilon )\leq \gamma (t).
\]

To show that it preserves suprema, note that any increasing sequence in $Z$ has a cofinal subsequence either in $Z_c$ or $Z_\soft$. Thus, we may assume that we are in one of these two cases. If the increasing sequence $(t_d)_d$ is in $Z_\soft = (0,\infty ]$, $\gamma$ preserves its supremum by assumption. Else, if $(t_d)_d$ is in $Z_c$, it either stabilizes (in which case $\gamma$ trivially preserves its supremum) or it tends to $\infty\in Z_\soft$. In this situation, one can take the sequence $(\sigma(t_d) )_d$ induced by the soft elements corresponding to our compact sequence. These two sequences share $\infty$ as their supremum. One has
\[
	\gamma (\sigma (t_d))\leq \gamma (t_d)\leq \sup_d \gamma (t_d),\andSep 
	\gamma (t_d)\leq \gamma \left( t_d+\frac{1}{d} \right)
\]
for every $d\geq 2$. This implies $\gamma (\infty )=\sup_d\gamma (\sigma (t_d))\leq \sup_d \gamma (t_d)$ and $\sup_d \gamma (t_d)\leq \sup_d\gamma (t_d+\frac{1}{d} )=\gamma (\infty )$. This shows $\sup_d \gamma (t_d)=\gamma (\infty )$, as desired.

Finally, to see that the map is additive, take $n\in Z_c$ and $t\in Z_\soft$. Then,
\[
	\gamma (n+t) = \gamma (\sigma (n)+t)=\gamma (\sigma (n))+\gamma (t)\leq \gamma (n)+\gamma (t).
\]

Conversely, if $t\neq 0$, let $\varepsilon >0$ such that $t-\varepsilon >0$. Then, $n+t=(\sigma(n)+\varepsilon)+(t-\varepsilon )$. This implies 
\[
	\gamma (n)+\gamma (t-\varepsilon )\leq \gamma (\sigma (n)+\varepsilon )+\gamma (t-\varepsilon )
	= \gamma (n+t )
\]
and, letting $\varepsilon$ tend to $0$, we obtain $\gamma (n)+\gamma (t)\leq \gamma (n+t )$, as required.
\end{proof}

Let $S$ be a \CuSgp{}. The following notation is inspired by \cite[Theorem~6.3.3]{AntPerThi18TensorProdCu}: For any pair $x'\leq x$ and any $k,n\in\NN$, set 
\[
	\mu ((k,n),x',x):=\{
		y\in S\mid ny\leq kx,\text{ and } kx'\leq (n+1)y
	\}.
\]
Note that this set is not empty whenever $x'\ll x$ and $x$ is almost divisible. Further, one has that
\[
	\mu ((k,n),x'',x)\subseteq \mu ((k,n),x',x)\subseteq \mu ((k,n),0,x)
\]
whenever $x'\leq x''\leq x$, and that $\mu ((k,n),0,x)=\{ y\in S\mid ny\leq kx \}$.

What follows is a generalization of \cite[Theorem~6.3.3]{AntPerThi18TensorProdCu} to our setting. In our case, the proof becomes more technical (for example, we cannot use unicity arguments) and so we proceed with additional care. As another difference between the methods, we will need to use the extension result (\cref{prp:ExtensionZtoW}) proved above.

\begin{lma}\label{lma:AlmUnpfMap}
Let $\varphi\colon S\to T$ be an almost unperforated generalized $\Cu$-morphism. Let $x_1,x_2\in S$ and $k_1,k_2,n_1,n_2\in\NN$ such that $k_1/n_1<k_2/(n_2+1)$. Assume that $x_1\leq x_2$. Then, 
\begin{enumerate}
\item $\varphi (y_1)\leq \varphi(y_2)$ for every $y_1\in \mu ((k_1,n_1),0,x_1)$ and $y_2\in \mu ((k_2,n_2),x_1,x_2)$.
\item If $\varphi$ is a \CuMor{}, then $\varphi (y_1)\ll \varphi(y_2)$ whenever $y_1\in \mu ((k_1,n_1),0,x_1)$, $y_2\in \mu ((k_2,n_2),x_2',x_2)$ and $x_1\ll x_2'\ll x_2$.
\end{enumerate}
\end{lma}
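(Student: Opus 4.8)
The plan is to reduce both statements to the almost unperforation of $\varphi$. The point is that membership of $y_i$ in a set $\mu((k_i,n_i),\cdot,\cdot)$ packages a divisibility inequality from above together with one from below; combining the lower bound carried by $y_1$ with the upper bound built into $y_2$, after multiplying through by suitable integer constants, produces a single relation of the form $(m+1)y_1\leq m y_2$, which is exactly what almost unperforation of $\varphi$ consumes.

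\emph{Part (1).} First I would take $y_1\in\mu((k_1,n_1),0,x_1)$ and $y_2\in\mu((k_2,n_2),x_1,x_2)$, so that $n_1 y_1\leq k_1 x_1$ and $k_2 x_1\leq (n_2+1)y_2$. Multiplying the first inequality by $k_2$, the second by $k_1$, and chaining gives
\[
 n_1 k_2\,y_1 \ \leq\ k_1 k_2\,x_1 \ \leq\ k_1(n_2+1)\,y_2 .
\]
The hypothesis $k_1/n_1<k_2/(n_2+1)$ is precisely $k_1(n_2+1)<n_1 k_2$, so $m:=k_1(n_2+1)$ satisfies $m+1\leq n_1 k_2$, whence $(m+1)y_1\leq n_1 k_2\,y_1\leq m y_2$; almost unperforation of $\varphi$ then yields $\varphi(y_1)\leq\varphi(y_2)$. (The degenerate case $k_1=0$ forces $y_1=0$ and is trivial; note also that this argument needs only $x_1\leq x_2$ and the lower bound defining $y_2$, not the upper bound $n_2 y_2\leq k_2 x_2$.)

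\emph{Part (2).} Here almost unperforation of $\varphi$ alone will not suffice, since it only ever outputs $\leq$-inequalities. My plan is to first replace $y_2$ by a $\ll$-predecessor $y_2'$ that still lies in an appropriate $\mu$-set. From $x_1\ll x_2'$, iterating \axiomO{3} gives $k_2 x_1\ll k_2 x_2'\leq (n_2+1)y_2$, hence $k_2 x_1\ll (n_2+1)y_2$; writing $y_2=\sup_d v_d$ with $v_d\ll v_{d+1}$ (\axiomO{2}) and using \axiomO{4} to get $(n_2+1)y_2=\sup_d (n_2+1)v_d$, the definition of $\ll$ supplies an index $d$ with $k_2 x_1\leq (n_2+1)v_d$. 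Set $y_2':=v_d$: then $y_2'\ll y_2$, $k_2 x_1\leq (n_2+1)y_2'$, and $n_2 y_2'\leq n_2 y_2\leq k_2 x_2$, so $y_2'\in\mu((k_2,n_2),x_1,x_2)$. Part (1) now gives $\varphi(y_1)\leq\varphi(y_2')$, and since $\varphi$ is here a \CuMor{} it preserves $\ll$, so $y_2'\ll y_2$ gives $\varphi(y_2')\ll\varphi(y_2)$; chaining, $\varphi(y_1)\leq\varphi(y_2')\ll\varphi(y_2)$, i.e.\ $\varphi(y_1)\ll\varphi(y_2)$.

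The main obstacle is precisely this last passage from $\leq$ to $\ll$: one cannot upgrade $(m+1)y_1\ll m y_2$ to $\varphi(y_1)\ll\varphi(y_2)$ directly through almost unperforation. Interpolating $y_2'$ inside the correct $\mu$-set, so that Part (1) applies to it, is what bridges the gap, and it is the only place where the strict relations $x_1\ll x_2'\ll x_2$ and the \CuMor{}-hypothesis (rather than just a generalized one) are used. The rest — tracking which $\mu$-sets the auxiliary elements belong to and disposing of the small-integer degeneracies — I expect to be routine.
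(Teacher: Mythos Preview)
Your proof is correct and follows essentially the same route as the paper's: both parts reduce to the chain $n_1k_2\,y_1\leq k_1k_2\,x_1\leq k_1(n_2+1)\,y_2$ together with the strict inequality $k_1(n_2+1)<n_1k_2$, and for Part~(2) both interpolate a $y_2'\ll y_2$ still satisfying the relevant lower bound so that almost unperforation yields $\varphi(y_1)\leq\varphi(y_2')\ll\varphi(y_2)$. The only cosmetic difference is that you place $y_2'$ explicitly back into a $\mu$-set and invoke Part~(1), whereas the paper re-derives the numerical inequality directly for $y_2'$; this changes nothing of substance.
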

\begin{proof}
One has $n_1 y_1\leq k_1 x_1$ and $k_2 x_1\leq (n_2+1)y_2$. In particular,
\[
	n_1k_2 y_1\leq k_1k_2x_1\leq (n_2+1)k_1 y_2.
\]

It follows from definition that $\varphi (y_1)\leq \varphi(y_2)$, which shows (1).

For (2), simply note that one gets
\[
	n_1k_2 y_1\leq k_1k_2x_1\ll k_1k_2x_2' \leq (n_2+1)k_1 y_2.
\]

Thus, we can find $y_2'$ such that $y_2'\ll y_2$ and $n_1k_2 y_1\leq (n_2+1)k_1y_2'$. This implies $\varphi (y_1)\leq \varphi (y_2')\ll \varphi (y_2)$, as required.
\end{proof}

\begin{lma}\label{prp:SupZabso}
Let $\varphi_1\colon S_1\to S_2$ and $\varphi_2\colon S_2\to T$ be generalized $\Cu$-morphisms. Assume that $\varphi_1$ is almost divisible, and that $\varphi_2$ is almost unperforated. Then, for any $x\in S_1$ and $t\in (0,\infty ]$, the set
\[
	\Phi (t,\varphi_1(x)) := \left\{
		\varphi_2 (y) \mid y\in \mu ((k,n),0,\varphi_1 (x)) \text{ for some } k,n\in\NN \text{ such that } \frac{k}{n}<t
	\right\}
\]
has a supremum, bounded by $\lceil t \rceil \varphi_2\varphi_1 (x)$ (here, $\lceil \infty \rceil :=\infty$.)
\end{lma}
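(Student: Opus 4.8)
The plan is to produce a \emph{countable} subset $\Psi\subseteq\Phi(t,\varphi_1(x))$ that is upward directed and whose supremum dominates every member of $\Phi(t,\varphi_1(x))$. Since a countable upward directed subset of a \CuSgp{} always has a supremum (pass to an increasing cofinal subsequence and invoke \axiomO{1}), this yields $\sup\Phi(t,\varphi_1(x))$; as $\Psi\subseteq\Phi(t,\varphi_1(x))$, every upper bound of $\Phi(t,\varphi_1(x))$ bounds $\Psi$, so the two suprema agree. The bound $\lceil t\rceil\varphi_2\varphi_1(x)$ will then follow from checking it is an upper bound of $\Phi(t,\varphi_1(x))$.

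First I would fix, using \axiomO{2}, a $\ll$-increasing sequence $(x_i)_i$ in $S_1$ with $\sup_i x_i=x$. The key construction is: for each $i,n\in\NN$, apply almost divisibility of $\varphi_1$ to the pair $x_i\ll x_{i+1}$ and the integer $n$ to get $z_{i,n}\in S_2$ with $nz_{i,n}\le\varphi_1(x_{i+1})$ and $\varphi_1(x_i)\le(n+1)z_{i,n}$. The main obstacle is conceptual rather than computational: $\varphi_1(x)$ need not itself be an almost divisible element, so one cannot hope for a $z$ with $nz\le\varphi_1(x)\le(n+1)z$; the remedy is this ``shift by one'', which simultaneously bounds $nz_{i,n}$ by $\varphi_1(x_{i+1})$ (hence by $\varphi_1(x_j)$ for every $j\ge i+1$) and keeps $\varphi_1(x_i)$ below $(n+1)z_{i,n}$, exactly what is needed to chain comparisons through \cref{lma:AlmUnpfMap}. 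I then set $\Psi:=\{\varphi_2(kz_{i,n}) : i,n,k\in\NN,\ k/n<t\}$; each such element lies in $\Phi(t,\varphi_1(x))$ since $n(kz_{i,n})=k(nz_{i,n})\le k\varphi_1(x)$, and those with $k=0$ are just $0$.

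Next I would record the elementary fact that for rational $q$ with $0<q<t$ there are $k',n'\in\NN$ with $q<k'/(n'+1)$ and $k'/n'<t$ (take $n'$ large enough that the interval $(qn'+q,\,tn')$ has length $>1$, and choose an integer $k'$ inside it; the case $t=\infty$ is trivial). \emph{Directedness of $\Psi$:} given $\varphi_2(k_1z_{i_1,n_1})$ and $\varphi_2(k_2z_{i_2,n_2})$, put $i_3:=\max(i_1,i_2)+1$, apply the arithmetic fact to $q=\max(k_1/n_1,k_2/n_2)$ to get $k_3,n_3$ with $q<k_3/(n_3+1)$ and $k_3/n_3<t$, and invoke \cref{lma:AlmUnpfMap}(1) with linking element $\varphi_1(x_{i_j+1})$ and top element $\varphi_1(x)$: one checks $k_jz_{i_j,n_j}\in\mu((k_j,n_j),0,\varphi_1(x_{i_j+1}))$ and $k_3z_{i_3,n_3}\in\mu((k_3,n_3),\varphi_1(x_{i_j+1}),\varphi_1(x))$, the latter because $\varphi_1(x_{i_j+1})\le\varphi_1(x_{i_3})\le(n_3+1)z_{i_3,n_3}$ (here the shift is used), giving $\varphi_2(k_jz_{i_j,n_j})\le\varphi_2(k_3z_{i_3,n_3})\in\Psi$ for $j=1,2$. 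Hence $s:=\sup\Psi$ exists.

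It remains to dominate an arbitrary $\varphi_2(y)\in\Phi(t,\varphi_1(x))$, say $ny\le k\varphi_1(x)$ with $k/n<t$, by $s$. Write $y=\sup_l y^{(l)}$ with $y^{(l)}\ll y$ (\axiomO{2}); by \axiomO{3} one has $ny^{(l)}\ll ny\le k\varphi_1(x)=\sup_i k\varphi_1(x_i)$, so $ny^{(l)}\le k\varphi_1(x_{i(l)})$ for some $i(l)$. Choosing $k',n'$ with $k/n<k'/(n'+1)$ and $k'/n'<t$ and applying \cref{lma:AlmUnpfMap}(1) with linking element $\varphi_1(x_{i(l)})$ (using $n'z_{i(l),n'}\le\varphi_1(x_{i(l)+1})\le\varphi_1(x)$ and $\varphi_1(x_{i(l)})\le(n'+1)z_{i(l),n'}$) gives $\varphi_2(y^{(l)})\le\varphi_2(k'z_{i(l),n'})\le s$, so $\varphi_2(y)=\sup_l\varphi_2(y^{(l)})\le s$. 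Thus $s=\sup\Phi(t,\varphi_1(x))$. Finally, $\lceil t\rceil\varphi_2\varphi_1(x)$ is an upper bound: from $ny\le k\varphi_1(x)$ and $k<n\lceil t\rceil$ one gets $(n\lceil t\rceil)y\le(n\lceil t\rceil-1)\bigl(\lceil t\rceil\varphi_1(x)\bigr)$ in $S_2$, so almost unperforation of $\varphi_2$ gives $\varphi_2(y)\le\lceil t\rceil\varphi_2\varphi_1(x)$; when $t=\infty$ this is immediate from $\varphi_2(y)\le n\varphi_2(y)=\varphi_2(ny)\le k\varphi_2\varphi_1(x)$. Taking the supremum over $\Phi(t,\varphi_1(x))$ finishes the proof.
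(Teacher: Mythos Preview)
Your proof is correct and follows essentially the same approach as the paper's: both use almost divisibility of $\varphi_1$ applied to a $\ll$-increasing sequence approximating $x$, together with \cref{lma:AlmUnpfMap}(1), to manufacture a countable cofinal family in $\Phi(t,\varphi_1(x))$ whose supremum is the desired one. The only cosmetic difference is that the paper packages this as a single increasing sequence $(\varphi_2(y_d))_d$ (by synchronizing the choice of $k_d,n_d$ with $x_d$), whereas you build a directed countable set $\Psi$ and then pass to a cofinal increasing sequence; the bound argument is likewise the same up to the choice of $m$ witnessing almost unperforation.
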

\begin{proof}
For every $d\in\NN$, take $k_d,n_d\in\NN$ and $x_d\in S_1$ such that
\[
	\frac{k_d}{n_d}<\frac{k_{d+1}}{n_{d+1}+1} ,\quad
	\sup_d \left(\frac{k_d}{n_d}\right)= t ,\quad
	x_d\ll x_{d+1},\andSep 
	\sup_d x_d = x
	.
\]

Set $x_0=0$. For each $d$, take $y_d\in \mu ((k_d,n_d),\varphi_1(x_{d-1}),\varphi_1(x_d))$, which exists by almost divisibility of $\varphi_1$. By \cref{lma:AlmUnpfMap}~(1), we see that the sequence $(\varphi_2 (y_d))_d$ is increasing in $T$. Consider $z=\sup_d \varphi_2 (y_d)$. We will prove that $z$ is the supremum of $\Phi (t,\varphi_1(x))$.

Take $y\in \mu ((k,n),0,\varphi_1 (x))$ for some $k,n\in\NN$ such that $k/n<t$. Take $y'\in S_2$ such that $y'\ll y$, and find $d\in\NN$ such that
\[
	\frac{k}{n}< \frac{k_{d+1}}{n_{d+1}+1},\andSep 
	y'\in \mu ((k,n),0,\varphi_1 (x_d)).
\]

Since $y_{d+1}\in \mu ((k_{d+1},n_{d+1}),\varphi_1 (x_d),\varphi_1 (x_{d+1}))$, it follows from \cref{lma:AlmUnpfMap} (1) that $\varphi_2 (y')\leq \varphi_2 (y_{d+1})\leq z$. As this holds for every $y'$ $\ll$-below $y$, we get $\varphi_2 (y)\leq z$. This shows that $z$ is the supremum of $\Phi (t,\varphi_1 (x))$, as desired.

To see that $z$ is bounded by $\lceil t \rceil \varphi_2\varphi_1 (x)$, simply note that for any pair $k,n$ such that $k/n<t$ we have $k+1\leq \lceil t \rceil n$. Thus, one gets $(k+1)y\leq \lceil t \rceil ny\leq k (\lceil t \rceil x)$. Consequently, we obtain $\varphi_2 (y)\leq \lceil t \rceil\varphi_2\varphi_1 (x)$, as desired.
\end{proof}

\begin{prp}
Let $\varphi_1\colon S_1\to S_2$ and $\varphi_2\colon S_2\to T$ be generalized $\Cu$-morphisms. Assume that $\varphi_1$ is almost divisible, and that $\varphi_2$ is almost unperforated. Then, for any $x\in S_1$, there exists a generalized $\Cu$-morphism $\alpha_x\colon Z\to T$ such that $\alpha_x (1)=\varphi_2\varphi_1 (x)$.
\end{prp}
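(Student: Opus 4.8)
The goal is to build, for each fixed $x \in S_1$, a generalized $\Cu$-morphism $\alpha_x \colon Z \to T$ sending $1$ to $\varphi_2\varphi_1(x)$. I would use \cref{prp:ExtensionZtoW}: it suffices to define $\alpha_x$ separately on the compact part $Z_c = \NN$ and the soft part $Z_\soft = [0,\infty]$, make $\alpha_x|_{Z_c}$ an order-preserving monoid morphism, make $\alpha_x|_{Z_\soft}$ a generalized $\Cu$-morphism, and verify the compatibility $\alpha_x(\sigma(1)) \leq \alpha_x(1) \leq \alpha_x(1+\varepsilon)$ for all $\varepsilon > 0$.

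\textbf{Defining the two pieces.} On $Z_c$, the only sensible choice is $\alpha_x(n) := n\,\varphi_2\varphi_1(x)$; this is visibly an order-preserving monoid morphism. On $Z_\soft$, I would use the family $\Phi(t,\varphi_1(x))$ from \cref{prp:SupZabso}: set $\alpha_x(t) := \sup \Phi(t,\varphi_1(x))$ for $t \in (0,\infty]$, and $\alpha_x(0) := 0$. \cref{prp:SupZabso} guarantees this supremum exists and is bounded by $\lceil t \rceil \varphi_2\varphi_1(x)$; in particular $\alpha_x(t) < \infty$ "in the appropriate sense" and everything stays inside $T$. One then must check $\alpha_x|_{Z_\soft}$ is a generalized $\Cu$-morphism: monotonicity in $t$ is immediate from the definition of $\Phi$ as a union of nested sets; additivity $\alpha_x(s) + \alpha_x(t) = \alpha_x(s+t)$ should follow by combining witnesses $y_1 \in \mu((k_1,n_1),0,\varphi_1(x))$ for $\Phi(s,\cdot)$ and $y_2 \in \mu((k_2,n_2),0,\varphi_1(x))$ for $\Phi(t,\cdot)$ into a witness $y_1 \oplus y_2$-type element for $\Phi(s+t,\cdot)$ after passing to a common denominator, and conversely splitting; preservation of suprema of increasing sequences $t_d \nearrow t$ follows because $\Phi(t,\varphi_1(x)) = \bigcup_d \Phi(t_d,\varphi_1(x))$ essentially by construction, so $\sup_d \alpha_x(t_d) = \sup_d \sup\Phi(t_d,\varphi_1(x)) = \sup\Phi(t,\varphi_1(x)) = \alpha_x(t)$.

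\textbf{The compatibility condition and the main obstacle.} It remains to verify $(i)$ of \cref{prp:ExtensionZtoW}: $\alpha_x(\sigma(1)) \leq \alpha_x(1) \leq \alpha_x(1+\varepsilon)$, i.e. $\sup\Phi(1,\varphi_1(x)) \leq \varphi_2\varphi_1(x) \leq \sup\Phi(1+\varepsilon,\varphi_1(x))$. The left inequality is the bound from \cref{prp:SupZabso} with $t=1$, giving $\lceil 1 \rceil \varphi_2\varphi_1(x) = \varphi_2\varphi_1(x)$. The right inequality is where the real work lies, and I expect it to be \emph{the main obstacle}: I need to exhibit, for suitable $k,n$ with $k/n < 1+\varepsilon$, witnesses $y \in \mu((k,n),0,\varphi_1(x))$ with $\varphi_2(y)$ large enough to recover $\varphi_2\varphi_1(x)$ in the supremum. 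The natural approach is to apply almost divisibility of $\varphi_1$ to $x' \ll x$ with parameter $k$, producing $z \in S_2$ with $kz \leq \varphi_1(x)$ and $\varphi_1(x') \leq (k+1)z$; then $z \in \mu((k,1),0,\varphi_1(x))$ (taking $n=1$, so $k/n = k$ — which forces $k=1$ if we want $k/n < 1+\varepsilon < 2$). With $k=n=1$ one gets $z \leq \varphi_1(x)$ and $\varphi_1(x') \leq 2z$, so $\varphi_2(z) \in \Phi(1+\varepsilon,\varphi_1(x))$ and $\varphi_2\varphi_1(x') \leq 2\varphi_2(z) \leq 2\alpha_x(1+\varepsilon)$ — but I want $\varphi_2\varphi_1(x) \leq \alpha_x(1+\varepsilon)$, not twice it. The fix is to use better rational approximations: for each $m$, almost divisibility with parameter $k = m$ and a decomposition of $x$ into $m$ pieces each $\ll$ the next gives elements summing to something $\mu((m,m),x',x)$-like, i.e. an element $y$ with $my \leq \varphi_1(x)$ and $\varphi_1(x') \leq (m+1)y \leq (1 + \tfrac1m)\varphi_1(x)$-scale, so that $m\varphi_2(y)$ approximates $\varphi_2\varphi_1(x')$ from below while $\varphi_2(y) \in \Phi(1+\tfrac1m, \varphi_1(x))$ since $m/m = 1 < 1+\tfrac1m$; passing to the supremum over $m$ and over $x' \ll x$, using that $\varphi_2, \varphi_1$ preserve suprema, recovers $\varphi_2\varphi_1(x) \leq \sup_\varepsilon \alpha_x(1+\varepsilon)$, and then a standard $\varepsilon$-argument upgrades this to $\varphi_2\varphi_1(x) \leq \alpha_x(1+\varepsilon)$ for every fixed $\varepsilon$ (since $\alpha_x(1+\varepsilon)$ is increasing in $\varepsilon$ and $\alpha_x(1+\varepsilon) = \sup_{\delta < \varepsilon}\alpha_x(1+\delta)$ by supremum-preservation). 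Once $(i)$ holds, \cref{prp:ExtensionZtoW} assembles the two pieces into the desired generalized $\Cu$-morphism $\alpha_x \colon Z \to T$ with $\alpha_x(1) = \varphi_2\varphi_1(x)$, completing the proof.
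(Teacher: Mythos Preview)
Your overall architecture is exactly the paper's: define $\alpha_x(n)=n\varphi_2\varphi_1(x)$ on $Z_c$, set $\alpha_x(t)=\sup\Phi(t,\varphi_1(x))$ on $Z_\soft$, verify that the soft restriction is a generalized \CuMor{}, and invoke \cref{prp:ExtensionZtoW}. Monotonicity and preservation of suprema are handled the same way, and your sketch of additivity, while terse, points in the right direction (the paper carries this out via \cref{lma:AlmUnpfMap}, which is where the ``common denominator'' bookkeeping and the almost unperforation of $\varphi_2$ enter).

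The gap is in your treatment of the right-hand compatibility inequality $\varphi_2\varphi_1(x)\leq\alpha_x(1+\varepsilon)$. With $y$ obtained from almost divisibility so that $my\leq\varphi_1(x)$ and $\varphi_1(x')\leq(m+1)y$, you place $my\in\mu((m,m),0,\varphi_1(x))$ and assert that $m\varphi_2(y)$ ``approximates $\varphi_2\varphi_1(x')$ from below''. But from $\varphi_1(x')\leq(m+1)y$ you only get $\varphi_2\varphi_1(x')\leq(m+1)\varphi_2(y)$, not $\varphi_2\varphi_1(x')\leq m\varphi_2(y)$; so $m\varphi_2(y)\in\Phi(1+\tfrac1m,\varphi_1(x))$ gives no bound on $\varphi_2\varphi_1(x')$. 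The subsequent ``standard $\varepsilon$-argument'' cannot repair this, and in any case the reasoning there would only yield $\varphi_2\varphi_1(x)\leq\sup_{\varepsilon>0}\alpha_x(1+\varepsilon)=\alpha_x(\infty)$, which is useless.

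There are two clean repairs. The nearest one to what you wrote: take $(m+1)y$ instead of $my$ as the witness. Since $m\cdot(m+1)y\leq(m+1)\varphi_1(x)$, one has $(m+1)y\in\mu((m+1,m),0,\varphi_1(x))$ with $(m+1)/m<1+\varepsilon$ once $m>1/\varepsilon$, and $\varphi_1(x')\leq(m+1)y$ gives $\varphi_2\varphi_1(x')\leq\varphi_2((m+1)y)\leq\alpha_x(1+\varepsilon)$ directly; taking the supremum over $x'\ll x$ finishes. The paper instead chooses $n$ with $(n+2)/n<1+\varepsilon$, applies almost divisibility to the pair $(n+2)x'\ll(n+2)x$ to obtain $y\in\mu((n+2,n),\varphi_1(x'),\varphi_1(x))$, and then uses almost unperforation of $\varphi_2$ on $(n+2)\varphi_1(x')\leq(n+1)y$ to conclude $\varphi_2\varphi_1(x')\leq\varphi_2(y)\leq\alpha_x(1+\varepsilon)$. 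Either route closes the argument in one step; no limiting ``$\varepsilon$-argument'' is needed afterwards.
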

\begin{proof}
For every $n\in Z_c$, set $\alpha_x (n):=n\varphi_2 \varphi_1 (x)$. For each $t\in Z_\soft$, define 
\[
	\alpha_x (t):= \sup \Phi (t,\varphi_1(x)),
\]
which exists by \cref{prp:SupZabso}. 

Note that, for any $s\geq t$ in $Z_\soft$, one has $\Phi (t,\varphi_1 (x))\subseteq \Phi (s,\varphi_1 (x))$. This implies $\alpha_x (t)\leq \alpha_x (s)$. Further, it follows from \cref{prp:SupZabso} that $\alpha_x (\sigma (1))\leq \varphi_2\varphi_1 (1)=\alpha_x (1)$. Additionally, for any $\varepsilon >0$, take any $x'\in S_1$ such that $x'\ll x$ and let $n\in\NN$ be such that $1<(n+2)/n<1+\varepsilon$. Using almost divisibility, there exists $y$ in $\mu (((n+2),n),\varphi_1 (x'),\varphi_1 (x))$. One has $(n+2)\varphi_1 (x')\leq (n+1)y$. Using almost unperforation, $\varphi_2 \varphi_1 (x')\leq \varphi_2 (y)\leq \alpha_x (1+\varepsilon )$ and, by taking suprema on $x'$, we deduce $\alpha_x (1)=\varphi_2 \varphi_1 (x)\leq \alpha_x (1+\varepsilon )$.

The arguments above show that $\alpha_x (\sigma (1))\leq \alpha_x (1)\leq \alpha_x (1+\varepsilon )$ for every $\varepsilon >0$. Further, note that $\alpha_x|_{Z_c}$ is trivially an order-preserving monoid morphism. We will now prove that $\alpha_x |_{Z_\soft}$ is a generalized \CuMor{}. \cref{prp:ExtensionZtoW} will then imply that $\alpha_x$ is a generalized \CuMor{}. We have already shown that $\alpha_x|_{Z_\soft}$ is order-preserving, so it suffices to prove that the map preserves suprema and addition.

To show that it preserves suprema, note that any increasing sequence $(t_d)_d$ in $Z_\soft$ satisfies $\cup_{d}\Phi (t_d,\varphi_1(x))=\Phi (\sup_d t_d,\varphi_1(x))$. This proves that $\alpha_x$ preserves suprema in $Z_\soft$.

To see that the map is additive, let $t_1,t_2\in Z_\soft=[0,\infty ]$. First, for each $i=1,2$, let $y_i\in \mu ((k_i,n_i),0,\varphi_1 (x))$ for some $k_i,n_i\in\NN$ such that
\[
	\frac{k_i}{n_i}<t_i.
\]

Take $y_i'\in S_2$ such that $y_i'\ll y_i$ for $i=1,2$, and let $x_0\in S_1$ be such that 
\[
	x_0\ll x,\andSep 
	y_i'\in\mu ((k_i,n_i),0,\varphi_1 (x_0)).
\]

Choose $k,n\in\NN$ such that 
\[ 
	\frac{k_1}{n_1}+\frac{k_2}{n_2} = \frac{k_1 n_2+k_2 n_1}{n_1 n_2}< \frac{k}{n+1}, \andSep
	\frac{k}{n}<t_1+t_2.
\]

Using that $\varphi_1$ is almost divisible, find $y\in \mu ((k,n),\varphi_1 (x_0),\varphi_1 (x))$. Note that $y_1'+y_2'\in \mu ((k_1 n_2+k_2 n_1,n_1 n_2),0,\varphi_1 (x_0))$. By \cref{lma:AlmUnpfMap} (1), one gets $\varphi_2 (y_1'+y_2')\leq \varphi_2 (y)$. Thus, we get 
\[
	\varphi_2 (y_1')+\varphi_2 (y_2')\leq \varphi_2 (y)\leq \alpha_x (t_1+t_2)
\]
and, since this holds for every choice of $y_1',y_2'$, one obtains $\varphi_2 (y_1)+\varphi_2 (y_2)\leq \alpha_x (t_1+t_2)$. Taking suprema now on $y_1,y_2$, we have $\alpha_x (t_1)+\alpha_x (t_2)\leq \alpha_x (t_1+t_2)$.

Conversely, take $y\in \mu ((k,n),0,\varphi_1 (x))$ for $k,n$ with $k/n<t_1+t_2$ and $x'\ll x$. Find $k_i,t_i\in\NN$ such that $\frac{k_i}{n_i}<t_i$ and
\[
	\frac{k}{n}< \frac{k_1}{n_1+1}+\frac{k_2}{n_2+1} =
	\frac{k_1(n_2+1) + k_2(n_1+1)}{(n_1+1)(n_2+1)}
	.
\]

Proceeding as before, take $y'\ll y$ and find $x'$ such that $x'\ll x$ and $y'\in \mu ((k,n),0,\varphi_1 (x'))$. Find $y_i\in \mu ((k_i,n_i),\varphi_1 (x'),\varphi_1 (x))$. In particular, we have
\[
	\begin{split}
	(k_1(n_2+1) + k_2(n_1+1))ny' &\leq (k_1(n_2+1) + k_2(n_1+1))k\varphi_1(x')\\
	&\leq (n_1+1)(n_2+1)k(y_1+y_2).
	\end{split}
\]

Since $\varphi_2$ is almost unperforated, we obtain $\varphi_2(y')\leq \varphi_2(y_1)+\varphi_2(y_2)\leq \alpha_x (t_1)+\alpha_x (t_2)$. Taking suprema on $y'$, and then on $y$, we deduce $\alpha_x (t_1+t_2)\leq \alpha_x (t_1)+\alpha_x (t_2)$, as desired. This proves that $\alpha_x$ is additive in $Z_\soft$.

\cref{prp:ExtensionZtoW} shows that $\alpha_x\colon Z\to T$ is a generalized \CuMor{}.
\end{proof}

\begin{thm}\label{prp:McDuffZ}
Let $\varphi_1\colon S_1\to S_2$ and $\varphi_2\colon S_2\to T$ be (generalized) $\Cu$-morphisms. Assume that $\varphi_1$ is almost divisible, and that $\varphi_2$ is almost unperforated. Then, there exists a (generalized) \CuMor{} $\beta\colon S_1\otimes Z\to T$ such that the following diagram commutes 
\[
 \xymatrix{
     S_1 \ar[r]^{\varphi_1} \ar[rd]_{- \otimes 1} & S_2\ar[r]^{\varphi_2} & T \\
     & S_1\otimes Z \ar[ru]_{\beta}
   } 
\]
\end{thm}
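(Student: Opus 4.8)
The plan is to obtain $\beta$ from the universal property of the tensor product in $\Cu$: by \cite{AntPerThi18TensorProdCu}, generalized $\Cu$-morphisms $S_1\otimes Z\to T$ correspond bijectively to generalized $\Cu$-bimorphisms $S_1\times Z\to T$ via precomposition with the canonical bimorphism $(x,z)\mapsto x\otimes z$, and composing such a morphism with $-\otimes 1\colon S_1\to S_1\otimes Z$ amounts to restricting the bimorphism to $S_1\times\{1\}$. So it suffices to build a generalized $\Cu$-bimorphism $\psi\colon S_1\times Z\to T$ with $\psi(x,1)=\varphi_2\varphi_1(x)$ for all $x$; and when $\varphi_1,\varphi_2$ are $\Cu$-morphisms we must moreover check that $\psi$ sends componentwise-$\ll$ pairs to $\ll$-pairs, so that the induced $\beta$ preserves $\ll$.

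First I would define $\psi(x,n):=n\,\varphi_2\varphi_1(x)$ for $n\in Z_c$ and $\psi(x,t):=\alpha_x(t)$ for $t\in Z_\soft$, where $\alpha_x\colon Z\to T$ is the generalized $\Cu$-morphism produced in the preceding proposition; then the two formulas agree at $0$, $\psi(x,1)=\alpha_x(1)=\varphi_2\varphi_1(x)$, and $\psi(x,-)=\alpha_x$ is automatically a generalized $\Cu$-morphism in the second variable. It then remains to see that $\psi(-,z)$ is a generalized $\Cu$-morphism for fixed $z$. For $z\in Z_c$ this is clear, as $\psi(-,z)$ is $\varphi_2\varphi_1$ followed by multiplication by $z$. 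For $z=t\in Z_\soft$, order-preservation and preservation of suprema in the first variable are straightforward from the definitions of $\mu$ and $\Phi$, using that $\varphi_1$ preserves order and suprema and that, for $y'\ll y$, the relation $ny'\le k\varphi_1(x)=k\sup_d\varphi_1(x_d)$ forces $ny'\le k\varphi_1(x_d)$ for some $d$.

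The step I expect to be the main obstacle is additivity of $\psi(-,t)$ for $t\in Z_\soft$. The inequality $\psi(x_1,t)+\psi(x_2,t)\le\psi(x_1+x_2,t)$ is easy: if $y_i\in\mu((k_i,n_i),0,\varphi_1(x_i))$ with $k_i/n_i<t$, then $y_1+y_2\in\mu((k,n_1n_2),0,\varphi_1(x_1+x_2))$ with $k:=\max(n_2k_1,n_1k_2)$ and $k/(n_1n_2)=\max(k_1/n_1,k_2/n_2)<t$, whence $\varphi_2(y_1)+\varphi_2(y_2)=\varphi_2(y_1+y_2)\le\psi(x_1+x_2,t)$, and one takes suprema over $y_1,y_2$. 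For the reverse inequality one cannot split an element way-below $\varphi_1(x_1)+\varphi_1(x_2)$, so instead I would exploit the flexibility in the preceding proposition, which computes $\alpha_{x_1+x_2}(t)$ as $\sup_d\varphi_2(y_d)$ for \emph{any} admissible data $k_d/n_d\uparrow t$ (with $k_d/n_d<k_{d+1}/(n_{d+1}+1)$), $(x_1+x_2)_d\uparrow x_1+x_2$, and $y_d\in\mu((k_d,n_d),\varphi_1((x_1+x_2)_{d-1}),\varphi_1((x_1+x_2)_d))$. Choosing $(x_1+x_2)_d:=x_{1,d}+x_{2,d}$ with $x_{i,d}\uparrow x_i$ and $x_{i,d}\ll x_{i,d+1}$, and applying almost divisibility of $\varphi_1$ to $x_{i,d-1}\ll x_{i,d}$ and $n_d$ to obtain $v_{i,d}\in S_2$ with $n_dv_{i,d}\le\varphi_1(x_{i,d})$ and $\varphi_1(x_{i,d-1})\le(n_d+1)v_{i,d}$, one checks that $y_d:=k_d(v_{1,d}+v_{2,d})$ lies in the required $\mu$-set; since also $k_dv_{i,d}\in\mu((k_d,n_d),0,\varphi_1(x_i))$ with $k_d/n_d<t$, we get $\varphi_2(y_d)=k_d\varphi_2(v_{1,d})+k_d\varphi_2(v_{2,d})\le\psi(x_1,t)+\psi(x_2,t)$, and the supremum over $d$ yields $\psi(x_1+x_2,t)\le\psi(x_1,t)+\psi(x_2,t)$. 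It is essential here that the bimorphism be built on $S_1\times Z$ and use almost divisibility of $\varphi_1$: one cannot in general factor $\varphi_2$ through $S_2\otimes Z$, since $w\mapsto\sup\Phi(t,w)$ need not be additive on an arbitrary $S_2$.

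Finally, $\psi(0,t)=\alpha_0(t)=\sup\Phi(t,0)=0$ and $\psi(x,0)=0$, so $\psi$ is a generalized $\Cu$-bimorphism, and the universal property yields a generalized $\Cu$-morphism $\beta\colon S_1\otimes Z\to T$ with $\beta\circ(-\otimes 1)=\psi(-,1)=\varphi_2\varphi_1$. When $\varphi_1,\varphi_2$ are $\Cu$-morphisms, it remains to verify that $x'\ll x$ and $z'\ll z$ imply $\psi(x',z')\ll\psi(x,z)$: I would split into cases according to the positions of $z',z$ in $Z=Z_c\cup Z_\soft$, in the cases involving $Z_c$ using that $\varphi_2\varphi_1$ preserves $\ll$ together with the slack in $x'\ll x$ (e.g.\ $z'\varphi_2\varphi_1(x')\ll z'\varphi_2\varphi_1(x)=\alpha_x(z')\le\alpha_x(z)$), and in the soft case using \cref{lma:AlmUnpfMap}(2) — which also shows that the increasing sequence realizing $\alpha_x(z)$ may be taken $\ll$-increasing — to conclude $\alpha_{x'}(z')\ll\alpha_x(z)$. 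This makes $\beta$ a $\Cu$-morphism, completing the proof.
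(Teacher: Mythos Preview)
Your proposal is correct and follows essentially the paper's approach: construct the bimorphism $\psi(x,t)=\alpha_x(t)$, verify each slot is a generalized \CuMor{}, and then upgrade to a \CuMor{} via a case split on $Z_c$ versus $Z_\soft$ using \cref{lma:AlmUnpfMap}(2). The only substantive deviation is in the subadditivity of $\psi(-,t)$: the paper argues directly via \cref{lma:AlmUnpfMap}(1) (choosing $y_i\in\mu((l,m),\varphi_1(x_i'),\varphi_1(x_i))$ and comparing $y'$ to $y_1+y_2$), whereas you exploit the freedom in the proof of \cref{prp:SupZabso} to pick a splittable witnessing sequence $y_d=k_d(v_{1,d}+v_{2,d})$; both work, and your variant trades an extra appeal to \cref{lma:AlmUnpfMap} for a dependence on the specific construction in \cref{prp:SupZabso}.
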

\begin{proof}
We define the map $\alpha\colon S_1\times Z\to T$ by
\[
	\alpha (x,t) := \alpha_x (t),
\]
which satisfies $\alpha (x,1)= \varphi_2\varphi_1 (x)$.

We will now prove that $\alpha$ is a generalized $\Cu$-bimorphism. Using \cite[Theorem~6.3.3~(1)]{AntPerThi18TensorProdCu}, this will imply the existence of $\beta\colon S\otimes Z\to T$ with the required properties.

Note that $\alpha (x,\cdot )=\alpha_x$ is a generalized \CuMor{} by the results above. Further, $\alpha (\cdot , n)$ is trivially a generalized \CuMor{} for every $n\in Z_c$. Set $\gamma_t (x):=\alpha (x , t)$ for every $t\in Z_\soft$, and let us show that $\gamma_t$ is a generalized \CuMor{}.

To see that $\gamma_t$ preserves order, take $x_0,x\in S_1$ such that $x_0\leq x$. Clearly, one has
\[
	\Phi (t,\varphi_1 (x_0))\subseteq \Phi (t,\varphi_1 (x))
\]
and thus $\gamma_t (x_0)=\alpha_{x_0} (t)\leq \alpha_x (t)=\gamma_t (x)$.

Now take $(x_d)_d$ in $S_1$ increasing with supremum $x$. Then, for any element $y\in \mu ((k,n),0,\varphi_1 (x))$ with $k/n<t$, take $y'\ll y$ and find $d\in\NN$ such that $y'\in \mu ((k,n),0,\varphi_1 (x_d))$. This shows that $\varphi_2 (y')\leq \gamma_t (x_d)$ and, consequently, $\varphi_2 (y)\leq \sup_d \gamma_t (x_d)$. Taking suprema, one gets $\gamma_t (x)\leq \sup_d \gamma_t (x_d)$. Since $\gamma_t$ is order-preserving, we also obtain $\sup_d \gamma_t (x_d)\leq \gamma_t (x)$. In other words, $\gamma_t$ preserves suprema of increasing sequences.

To prove that $\gamma_t$ is superadditive, take $x_1,x_2\in S_1$ and let $y_1,y_2\in S_2$ be such that $y_i\in\mu ((k_i,n_i),0,\varphi_1 (x_i))$ for some $k_i,n_i$'s such that $k_i/n_i<t$. Find $k,n\in\NN$ such that $k_i/n_i<k/(n+1)$ and $k/n<t$. Take $y_i'\in S_2$ such that $y_i'\ll y_i$, and let $x_i'\in S_1$ be such that $x_i'\ll x_i$ and $y_i'\in\mu ((k_i,n_i),0,\varphi_1 (x_i'))$. Using almost divisibility of $\varphi_1$, find $z_i\in\mu ((k,n),\varphi_1 (x_i'),\varphi_1(x_i))$. By \cref{lma:AlmUnpfMap} (1), we get $\varphi_2 (y_i')\leq \varphi_2 (z_i)$. Note that we have $z_1+z_2\in \mu ((k,n),0,\varphi_1 (x_1+x_2))$. Thus, one gets
\[
	\varphi_2 (y_1')+\varphi_2 (y_2')\leq \varphi_2 (z_1)+\varphi_2 (z_2)=\varphi_2 (z_1+z_2)\leq \alpha_{x_1+x_2}(t)=\gamma_t (x_1+x_2).
\]

Taking suprema on $y_1'$ and $y_2'$, this implies $\varphi_2 (y_1)+\varphi_2 (y_2)\leq \gamma_t (x_1+x_2)$. Taking now suprema on $y_1,y_2,k$ and $n$ gives $\gamma_t (x_1)+\gamma_t (x_2)\leq \gamma_t (x_1+x_2)$.

Conversely, to prove subadditivity, let $y\in \mu ((k,n),0,\varphi_1 (x_1+x_2))$. Take $y'\ll y$ and let $x_i'\ll x_i$ be such that $y'\in \mu ((k,n),0,\varphi_1 (x_1'+x_2'))$. Find $l,m\in\NN$ such that $k/n<l/(m+1)$ and $l/m<t$. Find $y_i\in \mu ((l,m),\varphi_1 (x_i'),\varphi_1 (x_i))$. Then, $y_1+y_2\in \mu ((l,m),\varphi_1 (x_1'+x_2'),\varphi_1 (x_1+x_2))$. By \cref{lma:AlmUnpfMap}, one obtains $\varphi_2 (y')\leq \varphi_2 (y_1)+\varphi_2 (y_2)$. Again, this implies $\varphi_2 (y)\leq \gamma_t (x_1)+\gamma_t (x_2)$ and, consequently, $\gamma_t (x_1+x_2)\leq \gamma_t (x_1)+\gamma_t (x_2)$.

We have shown that each coordinate of $\alpha$ is a generalized \CuMor{}. In particular, we know that there exists a generalized \CuMor{} $\beta \colon S_1\otimes Z\to T$ with the desired properties; see \cite[Lemma~6.3.2,~Theorem~6.3.3]{AntPerThi18TensorProdCu}.

Now assume that $\varphi_1$ and $\varphi_2$ are \CuMor{s}. To prove that $\alpha$ is in fact a $\Cu$-bimorphism, take $t',t\in Z$ and $x',x\in S$ such that $t'\ll t$ and $x'\ll x$. We have to show that $\alpha (x',t')\ll \alpha (x,t )$ . If $t'$ or $t$ are in $\NN$, we may assume $t=t'$. In this case, one has $\alpha (x',t)=t\varphi_2\varphi_1 (x')\ll t\varphi_2\varphi_1 (x)=\alpha (x,t)$ because both $\varphi_1$ and $\varphi_2$ are \CuMor{s}. Finally, assume $t',t\in (0,\infty ]$. Take $x_1,x_2\in S_1$ such that $x'\ll x_1\ll x_2\ll x$.

Find $l_1,l_2,m_1,m_2\in\NN$ such that
\[
	t'<\frac{l_1}{m_1+1},\quad 
	\frac{l_1}{m_1}<\frac{l_2}{m_2+1},\quad 
	\frac{l_2}{m_2}<t.
\]

By almost divisibility of $\varphi_1$, there exist elements $y_1,y_2\in S_2$ such that $y_1\in \mu ((l_1,m_1),\varphi_1 (x'),\varphi_1 (x_1))$ and $y_2\in\mu ((l_2,m_2),\varphi_1 (x_2),\varphi_1 (x))$. By \cref{lma:AlmUnpfMap} (2), one gets $\varphi_2 (y_1)\ll \varphi_2 (y_2)\leq \alpha (x,t)$.

Now note that for every $y\in \mu ((k,n),0,\varphi_1 (x'))$ with $k/n<t'$, one has $k/n<l_1/(m_1+1)$. Thus, another application of \cref{lma:AlmUnpfMap} (1) gives $\varphi_2 (y)\leq \varphi_2 (y_1)$. In other words, $\alpha (x',t')\leq \varphi_2 (y_1)$. Since we already know that $\varphi_2 (y_1)\ll \alpha (x,t)$, one gets $\alpha (x',t')\ll \alpha (x,t)$, as desired.

Now \cite[Theorem~6.3.3]{AntPerThi18TensorProdCu} shows that $\beta\colon S_1\otimes Z\to T$ is a \CuMor{} with the desired properties.
\end{proof}

\begin{cor}
Let $\varphi\colon S\to T$ be a \CuMor{}. Then,
\begin{itemize}
\item[(i)] if $S$ is almost divisible and $\varphi$ is almost unperforated, $\varphi$ factorizes through $S\otimes Z$. 
\item[(ii)]  if $T$ is almost unperforated and $\varphi$ is almost divisible, $\varphi$ factorizes through $S\otimes Z$. 
\end{itemize}
\end{cor}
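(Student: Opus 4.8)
The plan is to deduce both statements directly from \cref{prp:McDuffZ} by a careful choice of the two composable morphisms $\varphi_1,\varphi_2$, together with the elementary observation that an identity morphism is almost divisible (resp.\ almost unperforated) exactly when its underlying \CuSgp{} is. For~(i), I would take $\varphi_1=\id_S\colon S\to S$ and $\varphi_2=\varphi\colon S\to T$. Unwinding \cref{dfn:ZMultPure}, the map $\id_S$ is almost divisible if and only if for every $k\in\NN$ and every $x'\ll x$ in $S$ there is $z\in S$ with $kz\leq x$ and $x'\leq(k+1)z$, which is precisely the hypothesis that $S$ is almost divisible (see \cref{pgr:PurenessCuSgp}); and $\varphi_2=\varphi$ is almost unperforated by assumption. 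Since moreover $\id_S$ and $\varphi$ are genuine \CuMor{s}, \cref{prp:McDuffZ} produces a \CuMor{} $\beta\colon S\otimes Z\to T$ with $\beta\circ(-\otimes 1)=\varphi_2\varphi_1=\varphi$, which is the desired factorization (recall $Z\cong\Cu(\mathcal{Z})$, so $S\otimes Z=S\otimes\Cu(\mathcal{Z})$).

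For~(ii), I would instead take $\varphi_1=\varphi\colon S\to T$ and $\varphi_2=\id_T\colon T\to T$. Here $\varphi_1=\varphi$ is almost divisible by hypothesis, while $\id_T$ is almost unperforated if and only if $x\leq y$ whenever $(m+1)x\leq my$ for some $m\in\NN$, that is, if and only if $T$ is almost unperforated, again by assumption. Both maps being \CuMor{s}, \cref{prp:McDuffZ} yields a \CuMor{} $\beta\colon S\otimes Z\to T$ with $\beta\circ(-\otimes 1)=\id_T\circ\varphi=\varphi$, as required.

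I do not anticipate any genuine obstacle: essentially all the content sits in \cref{prp:McDuffZ}, of which the corollary is just the special case where one of the two morphisms in the composition is an identity. The only point requiring (minimal) care is the bookkeeping between generalized \CuMor{s} and \CuMor{s}: in both cases $\varphi_1$ and $\varphi_2$ are honest \CuMor{s} (identities are, and $\varphi$ is so by hypothesis), hence the $\beta$ furnished by \cref{prp:McDuffZ} is an honest \CuMor{} and the factorization takes place in the category $\Cu$.
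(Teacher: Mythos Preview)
Your proposal is correct and is exactly the paper's approach: for (i) the paper considers the composition $S\to S\to T$ and for (ii) the composition $S\to T\to T$, applying \cref{prp:McDuffZ} in each case. Your additional unpacking of why the identity maps inherit almost divisibility/unperforation from their underlying \CuSgp{s} is accurate and simply makes explicit what the paper leaves implicit.
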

\begin{proof}
For (i), consider the composition of maps $S\to S\to T$ and apply \cref{prp:McDuffZ}. For (ii), consider $S\to T\to T$ and apply \cref{prp:McDuffZ}.
\end{proof}

\cref{prp:McDuffZ} above provides a partial answer to \cref{qst:Pure2}:

\begin{thm}\label{thm:MainCuA}
 Let $\theta_1\colon A_1\to A_2$ and $\theta_2\colon A_2\to B$ be pure \stHom{s}. Then, there exists a \CuMor{} $\beta$ such that the following diagram commutes
 \[
 \xymatrix{
     \Cu (A_1) \ar[rr]^{\Cu (\theta_2\theta_1)} \ar[rd]_{-\otimes 1} && \Cu (B) \\
     & \Cu (A_1)\otimes \Cu (\mathcal{Z}) \ar[ru]_{\beta}
   } 
\]
\end{thm}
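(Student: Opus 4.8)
The plan is to read the statement off the technical result \cref{prp:McDuffZ}, in which essentially all of the work has already been done. First I would set $S_1=\Cu (A_1)$, $S_2=\Cu (A_2)$, $T=\Cu (B)$, and take $\varphi_1=\Cu (\theta_1)\colon S_1\to S_2$ and $\varphi_2=\Cu (\theta_2)\colon S_2\to T$, so that $\varphi_2\varphi_1=\Cu (\theta_2\theta_1)$ by functoriality of the Cuntz semigroup. Since $\theta_1$ and $\theta_2$ are \stHom{s}, both $\varphi_1$ and $\varphi_2$ are \CuMor{s} (not merely generalized ones), which is exactly what is needed to invoke the second, $\ll$-preserving, conclusion of \cref{prp:McDuffZ}.

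Next I would check the two hypotheses of \cref{prp:McDuffZ}. By \cref{dfn:ZMultPure}, pureness of $\theta_1$ says that $\varphi_1$ has $\Cu (\mathcal{Z})$-multiplication; in particular $\varphi_1$ is almost divisible. Likewise, pureness of $\theta_2$ gives in particular that $\varphi_2$ is almost unperforated. (Only these two halves of pureness are actually used, as anticipated by the remark following the statement in the introduction.) Applying \cref{prp:McDuffZ} then produces a \CuMor{} $\beta\colon S_1\otimes Z\to T$ with $\beta\circ(-\otimes 1)=\varphi_2\varphi_1$.

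Finally, I would rewrite the middle term of the factorization: since $Z\cong\Cu (\mathcal{Z})$ as \CuSgp{s} (see, e.g., \cite{AntPerThi18TensorProdCu}), the tensor product $S_1\otimes Z$ is precisely $\Cu (A_1)\otimes\Cu (\mathcal{Z})$, and the structure map $-\otimes 1\colon \Cu (A_1)\to \Cu (A_1)\otimes Z$ is the one labelling the diagram; substituting $\varphi_2\varphi_1=\Cu (\theta_2\theta_1)$ yields exactly the commuting triangle in the statement. I do not anticipate a genuine obstacle here: all of the difficulty is contained in \cref{prp:McDuffZ} and the lemmata preceding it (the construction of the generalized $\Cu$-bimorphism $\alpha$ and the appeal to \cite[Theorem~6.3.3]{AntPerThi18TensorProdCu}). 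The only points that require a modicum of care are the bookkeeping of which of the two maps must be almost divisible and which almost unperforated, and the observation that the stronger \CuMor{} hypothesis of \cref{prp:McDuffZ} is at our disposal precisely because we start from \stHom{s} rather than from general cpc.\ order-zero maps.
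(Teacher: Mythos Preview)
Your proposal is correct and matches the paper's approach exactly: the paper presents \cref{thm:MainCuA} as an immediate consequence of \cref{prp:McDuffZ}, and your identification of $S_i=\Cu(A_i)$, $T=\Cu(B)$, $\varphi_i=\Cu(\theta_i)$ together with the observation that \stHom{s} induce genuine \CuMor{s} (so the $\ll$-preserving version applies) is precisely what is needed. Your bookkeeping of which half of pureness is used for each map is also accurate and aligns with the paper's remark that only almost divisibility of $\theta_1$ and almost unperforation of $\theta_2$ are required.
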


One also obtains the following proposition, which answers \cref{qst:Pure2} completely when the initial domain is an AF-algebra and the codomain is of stable rank one. We believe that this result may be far more general, but new or tinkered techniques need to be developed in order to do so; see \cref{qst:Ext}.

\begin{prp}\label{prp:AFPureFactor}
Let $A_1$ be a unital AF-algebra, and let $B$ be a unital \ca{} of stable rank one. Let $\theta_1\colon A_1\to A_2$ and $\theta_2\colon A_2\to B$ be unital, pure *-homo\-mor\-phisms. Then, $\theta_2\theta_1$ factors, up to approximately unitarily equivalence, through $A\otimes\mathcal{Z}$.
\end{prp}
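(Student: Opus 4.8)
The plan is to feed the \CuMor{} produced by \cref{thm:MainCuA} into Robert's classification of \stHom{s} from inductive limits of $1$-dimensional NCCW complexes with trivial $K_1$-group into \ca{s} of stable rank one \cite{Rob12Class}. Concretely, I would first apply \cref{thm:MainCuA} to the pure \stHom{s} $\theta_1,\theta_2$ to obtain a \CuMor{} $\beta\colon\Cu (A_1)\otimes\Cu (\mathcal{Z})\to\Cu (B)$ with $\beta\circ (-\otimes 1)=\Cu (\theta_2\theta_1)$. Writing the AF-algebra $A_1$ as $\varinjlim F_n$ with each $F_n$ finite-dimensional, and using continuity and additivity of the Cuntz-semigroup tensor product, matrix-stability of $\Cu$, and $\Cu (\mathbb{C})\otimes\Cu (\mathcal{Z})\cong\Cu (\mathcal{Z})$, one identifies $\Cu (A_1)\otimes\Cu (\mathcal{Z})$ with $\Cu (A_1\otimes\mathcal{Z})$; under this identification the map $-\otimes 1$ becomes $\Cu (\iota)$, where $\iota\colon A_1\to A_1\otimes\mathcal{Z}$, $a\mapsto a\otimes 1_{\mathcal{Z}}$, is the first-factor embedding. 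I would also record that $\beta$ preserves the class of the unit, since $\beta ([1_{A_1\otimes\mathcal{Z}}])=\beta (\Cu (\iota)([1_{A_1}]))=\Cu (\theta_2\theta_1)([1_{A_1}])=[1_B]$.

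Next I would observe that $A_1\otimes\mathcal{Z}=\varinjlim F_n\otimes\mathcal{Z}$ is an inductive limit of $1$-dimensional NCCW complexes with trivial $K_1$: each $F_n\otimes\mathcal{Z}$ is a finite direct sum of matrix algebras over $\mathcal{Z}$, and $\mathcal{Z}$ is itself an inductive limit of prime dimension-drop algebras, which ---as are their matrix amplifications--- are $1$-dimensional NCCW complexes with trivial $K_1$. Since $B$ has stable rank one, Robert's theorem \cite{Rob12Class} then provides a \stHom{} $\psi\colon A_1\otimes\mathcal{Z}\to B$ with $\Cu (\psi)=\beta$; as $\beta$ preserves the unit class and two projections with the same Cuntz class in a stable rank one \ca{} are Murray-von Neumann equivalent, I may arrange that $\psi$ is unital.

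Finally, setting $\theta:=\psi\circ\iota\colon A_1\to B$, one has $\Cu (\theta)=\Cu (\psi)\circ\Cu (\iota)=\beta\circ (-\otimes 1)=\Cu (\theta_2\theta_1)$, and $\theta$ is unital. Since $A_1$ is AF ---hence an inductive limit of $1$-dimensional NCCW complexes with trivial $K_1$--- and $B$ has stable rank one, the uniqueness part of Robert's theorem shows that the two unital \stHom{s} $\theta$ and $\theta_2\theta_1$, having equal induced \CuMor{}, are approximately unitarily equivalent. As $\theta$ factors through $A_1\otimes\mathcal{Z}$ by construction, this proves the proposition.

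The hard part here is bookkeeping rather than a genuine obstruction: one has to be careful about the precise invariant in Robert's theorem ---the augmented Cuntz semigroup, which for unital \stHom{s} between unital \ca{s} carries no information beyond $\Cu$ together with the image of the unit class--- and about the identification $\Cu (A_1\otimes\mathcal{Z})\cong\Cu (A_1)\otimes\Cu (\mathcal{Z})$; both reduce to standard facts about AF-algebras and the Cuntz-semigroup tensor product. The genuine limitation of this strategy ---and the reason AF-ness of $A_1$ and stable rank one of $B$ are assumed--- is that it requires $A_1\otimes\mathcal{Z}$ to be a limit of $1$-dimensional NCCW complexes with trivial $K_1$ in order for Robert's theorem to apply.
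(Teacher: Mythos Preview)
Your proposal is correct and follows essentially the same route as the paper: factor $\Cu(\theta_2\theta_1)$ through $\Cu(A_1)\otimes\Cu(\mathcal{Z})$ via \cref{thm:MainCuA}, identify this with $\Cu(A_1\otimes\mathcal{Z})$ (the paper cites \cite[Proposition~6.4.13]{AntPerThi18TensorProdCu} directly rather than arguing from continuity and matrix-stability), observe that $A_1\otimes\mathcal{Z}$ is an inductive limit of $1$-dimensional NCCW complexes with trivial $K_1$, and then invoke Robert's theorem. Your write-up is more explicit than the paper's---you spell out the unit-class bookkeeping and the separate existence/uniqueness applications of \cite{Rob12Class}, which the paper compresses into a single ``the result now follows''---but the argument is the same.
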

\begin{proof}
The induced composition of \CuMor{s} factorizes through $\Cu (A)\otimes \Cu (\mathcal{Z})$ by \cref{prp:McDuffZ}. It follows from \cite[Proposition~6.4.13]{AntPerThi18TensorProdCu} that $\Cu (A)\otimes \Cu (\mathcal{Z})\cong \Cu (A\otimes \mathcal{Z})$.

Further, the \ca{} $A\otimes \mathcal{Z}$ is an inductive limit of 1-dimensional NCCW complexes with trivial $K_1$-group. The result now follows from \cite[Theorem~1.0.1]{Rob12Class}.
\end{proof}

%==========================================================================================
Note that there are two obstructions to generalizing \cref{prp:AFPureFactor}. First, the Cuntz semigroup tensor product does not generally behave well with its $C^*$-algebraic counterpart. For example, it is known that $\Cu (C[0,1])\otimes \Cu (\mathcal{Z})\not\cong \Cu (C[0,1]\otimes \mathcal{Z})$; see \cite[Proposition~6.4.4]{AntPerThi18TensorProdCu}. Secondly, the only currently available result for lifting \stHom{s} is Robert's \cite[Theorem~1.0.1]{Rob12Class}. However, it is conceivable that such result can be generalized whenever $A,B$ are sufficiently noncommutative (say, if $A,B$ are simple) and $\Cu (\theta )$ maps every Cuntz class to a strongly soft class. The following question is related to the first of the two obstructions.

\begin{qst}\label{qst:Ext}
 Let $A,B$ be \ca{s}, and let $\psi\colon\Cu (A)\otimes\Cu (\mathcal{Z})\to \Cu (B)$ be a \CuMor{}. When does there exist a \CuMor{} $\rho\colon\Cu (A\otimes \mathcal{Z})\to \Cu (B)$ such that $\psi ([a]\otimes 1)=\rho ([a\otimes 1])$?
\end{qst}
%==========================================================================================

As shown by Winter in \cite[Corollary~7.4]{Win12NuclDimZstable}, a separable, unital, simple, non-elementary \ca{} of locally finite nuclear dimension is pure if and only if it is $\mathcal{Z}$-stable. In analogy to this result, one may ask:

\begin{qst}\label{qst:TWMaps}
Let $A,B$ be \ca{s}. Under which conditions on $A$ and $B$ does every pure \stHom{} $\theta\colon A\to B$ factor (in a suitable sense) through a $\mathcal{Z}$-stable \ca{}?
\end{qst}

Restricting the codomain in \cref{prp:AFPureFactor} further, we obtain a first answer to \cref{qst:TWMaps}.

\begin{cor}\label{cor:AFStrict}
Let $A$ be a unital AF-algebra, and let $B$ be a unital \ca{} of stable rank one and with strict comparison. Let $\theta\colon A\to B$ be a unital, pure \stHom{}. Then, $\theta$ factors up to approximately unitarily equivalence through $A\otimes\mathcal{Z}$.
\end{cor}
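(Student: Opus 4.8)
The plan is to view $\theta$ as the trivial composition $\id_B\circ\theta\colon A\to B\to B$ and feed this into \cref{prp:AFPureFactor}. The only thing to notice is that $\id_B$ need not be a pure \stHom{} --- so \cref{prp:AFPureFactor} cannot be quoted verbatim --- but its proof only uses that the first map induces an almost divisible \CuMor{} and the second an almost unperforated one, which is precisely the hypothesis of \cref{prp:McDuffZ}. Now $\Cu(\theta)$ is almost divisible because $\theta$ is pure, and $\Cu(\id_B)=\id_{\Cu(B)}$ is almost unperforated in the sense of \cref{dfn:ZMultPure}(i) precisely because $B$ has strict comparison, i.e.\ $\Cu(B)$ is almost unperforated. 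Both maps are honest \CuMor{s}, so \cref{prp:McDuffZ} applied with $\varphi_1=\Cu(\theta)$ and $\varphi_2=\id_{\Cu(B)}$ produces a \CuMor{} $\beta\colon\Cu(A)\otimes\Cu(\mathcal{Z})\to\Cu(B)$ with $\beta\circ(-\otimes 1)=\Cu(\theta)$.

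From here I would argue exactly as in the proof of \cref{prp:AFPureFactor}. Since $A$ is AF, \cite[Proposition~6.4.13]{AntPerThi18TensorProdCu} identifies $\Cu(A)\otimes\Cu(\mathcal{Z})$ with $\Cu(A\otimes\mathcal{Z})$ compatibly with the maps $-\otimes 1$, and both $A$ and $A\otimes\mathcal{Z}$ are inductive limits of $1$-dimensional NCCW complexes with trivial $K_1$-group. As $B$ has stable rank one, \cite[Theorem~1.0.1]{Rob12Class} lifts $\beta$ to a \stHom{} $\psi\colon A\otimes\mathcal{Z}\to B$ with $\Cu(\psi)=\beta$, which can be taken unital since $\beta([1_A]\otimes 1)=\Cu(\theta)([1_A])=[1_B]$. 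Then $\psi\circ(\id_A\otimes 1_{\mathcal{Z}})$ and $\theta$ are unital \stHom{s} $A\to B$ inducing the same \CuMor{}, so the uniqueness part of \cite[Theorem~1.0.1]{Rob12Class} --- applicable since $A$ also lies in Robert's class and $B$ has stable rank one --- shows they are approximately unitarily equivalent. Hence $\theta$ factors, up to approximate unitary equivalence, through $A\otimes\mathcal{Z}$.

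I do not expect a genuine obstacle, since all the substantial work is already contained in \cref{prp:McDuffZ}, \cref{prp:AFPureFactor} and Robert's classification theorem. The one point that must be handled with care is the reduction step itself: observing that taking $\theta_2=\id_B$ supplies exactly the missing ingredient, because ``strict comparison of $B$'' is literally the statement that $\id_{\Cu(B)}$ is almost unperforated, and that the proof of \cref{prp:AFPureFactor} really only needs the weaker hypotheses of \cref{prp:McDuffZ} rather than pureness of both maps. A minor bookkeeping issue --- compatibility of the lift with unitality and of the final approximate-unitary-equivalence statement --- is dealt with exactly as in \cref{prp:AFPureFactor}.
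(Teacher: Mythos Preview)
Your proposal is correct and matches the paper's intended argument exactly: the paper states \cref{cor:AFStrict} without proof, but the introduction explicitly says one obtains it from \cref{prp:AFPureFactor} by setting $\theta_2=\id_B$, using that strict comparison of $B$ makes $\id_{\Cu(B)}$ almost unperforated so that \cref{prp:McDuffZ} applies. Your observation that the proof of \cref{prp:AFPureFactor} only needs the hypotheses of \cref{prp:McDuffZ} (rather than full pureness of both maps) is precisely the point, and the paper itself notes this generality immediately after \cref{thm:MainCuA}.
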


%==========================================================================================
\begin{rmk}
Following the ideas from \cref{dfn:ZMultPure}, one could also define other $\Cu$-like notions for morphisms, such as algebraicity and (weak) $(2,\omega)$-divisibility.
\end{rmk}

%==========================================================================================
%==========================================================================================
%==========================================================================================
\section{\texorpdfstring{Soft and rational \stHom{s}}{Soft and rational *-homomorphisms}}\label{subsec:WMult}

In this last section we exploit \cref{thm:MainCuA} in two cases of interest: Pure maps with a soft image (\cref{dfn:WMult}), and rational maps (\cref{dfn:RatCuMor}). These notions are meant to generalize tensorial absorption, at a Cuntz semigroup level, of the Jacelon-Razak algebra and UHF-algebras respectively. In contrast to \cref{thm:MainCuA}, $\Cu$-tensor products and $C^*$-tensor products of such algebras do behave nicely. This allows us to show that a composition of maps always factors (at the level of $\Cu$) through $A\otimes M_q$ and $A\otimes \mathcal{W}$ respectively; see Theorems \ref{thm:MainCuAMq} and \ref{thm:MainCuAW}.

%==========================================================================================
\subsection{\texorpdfstring{$q$}{q}-rational morphisms}

Given a supernatural number $q$ such that $q=q^2$ and $q\neq 1$, let $M_q$ denote the UHF-algebra associated to $q$. As shown in \cite[Section~7.4]{AntPerThi18TensorProdCu}, $\Cu (M_q)\cong K_q\sqcup (0,\infty ]$ where $K_q$ is the subset of $\QQ_+$ formed by the elements of the form $\frac{k}{n}$ with $k,n$ coprime and $n$ a divisor of $q$.

Adapting \cite[Definition~7.4.6]{AntPerThi18TensorProdCu} to our setting, we define:

\begin{dfn}\label{dfn:RatCuMor}
Let $\varphi\colon S\to T$ be a generalized \CuMor{}, and let $q$ be a supernatural number as above. We will say that $\varphi$ is \emph{$q$-rational} if it is both
\begin{itemize}
\item[(i)] \emph{$q$-divisible}, that is, if for every $x\in S$ and every finite divisor $n$ of $q$ there exists $y\in T$ such that $\varphi (x)=ny$.
\item[(ii)] \emph{$q$-unperforated}, that is, if whenever $nx\leq ny$, for some finite divisor $n$ of $q$, one has $\varphi (x)\leq \varphi (y)$. 
\end{itemize}
\end{dfn}

As shown in \cite[Theorem~7.4.10]{AntPerThi18TensorProdCu}, a \CuSgp{} $S$ tensorially absorbs $\Cu (M_q)$ if and only if $S$ is $q$-divisible and $q$-unperforated. Thus, examples of morphisms satisfying the two conditions above include all morphisms whose domain or codomain absorbs $\Cu (M_q)$ tensorially.

\begin{prp}\label{prp:McDuffMq}
Let  $q$ be a supernatural number such that $q=q^2$ and $q\neq 1$. Let $\varphi_1\colon S_1\to S_2$ be a $q$-divisible (generalized) $\Cu$-morphism and let $\varphi_2\colon S_2\to T$ be a $q$-unperforated (generalized) $\Cu$-morphism. Then, there exists a (generalized) \CuMor{} $\gamma\colon S\otimes \Cu (M_q)\to T$ such that $\gamma (x\otimes 1)=\varphi_2\varphi_1(x)$.
\end{prp}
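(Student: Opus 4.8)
The plan is to run the proof of \cref{prp:McDuffZ} with $Z=\Cu(\mathcal{Z})=Z_c\sqcup Z_\soft$ replaced by $\Cu(M_q)=K_q\sqcup(0,\infty]$, exploiting that $q$-divisibility yields \emph{exact} divisibility by the finite divisors of $q$. First, for each $x\in S_1$ I would build a generalized \CuMor{} $\alpha_x\colon\Cu(M_q)\to T$ with $\alpha_x(1)=\varphi_2\varphi_1(x)$. On the compact part, given $k/n\in K_q$ (so $\gcd(k,n)=1$ and $n\mid q$), choose $w\in S_2$ with $nw=\varphi_1(x)$, which exists by $q$-divisibility of $\varphi_1$, and set $\alpha_x(k/n):=k\varphi_2(w)$; since $\varphi_2$ is $q$-unperforated, $nu\le nv$ forces $\varphi_2(u)\le\varphi_2(v)$, so this is independent of $w$ and of the (possibly non-reduced, denominator dividing $q$) representative of $k/n$, making $\alpha_x|_{K_q}$ an order-preserving monoid morphism; equivalently $\alpha_x(k/n)=\sup\{\varphi_2(y)\mid ny\le k\varphi_1(x)\}$, the form that makes preservation of suprema in $x$ transparent below. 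Here the hypothesis $q=q^2$ is used to ensure that $K_q$ is a submonoid of $\QQ_+$ and that products of finite divisors of $q$ again divide $q$. On the soft part, mirroring \cref{prp:SupZabso}, put
\[
\Phi_q(t,\varphi_1(x)):=\bigl\{\,\varphi_2(y)\mid y\in S_2,\ ny\le k\varphi_1(x)\ \text{for some}\ k,n\in\NN\ \text{with}\ n\mid q,\ k/n<t\,\bigr\},
\]
and $\alpha_x(t):=\sup\Phi_q(t,\varphi_1(x))$. Since $q=q^2$ and $q\neq1$, some prime $p$ satisfies $p^d\mid q$ for every $d\in\NN$; taking $n_d=p^d$ and $k_d$ with $k_d/n_d<k_{d+1}/n_{d+1}$ and $\sup_d k_d/n_d=t$, writing $\varphi_1(x)=n_d w_d$ and $y_d:=k_d w_d$, a short computation with $q$-unperforation (as in \cref{lma:AlmUnpfMap}) shows that $(\varphi_2(y_d))_d$ is increasing and cofinal in $\Phi_q(t,\varphi_1(x))$, so the supremum exists.

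Next I would record the evident $\Cu(M_q)$-analogue of \cref{prp:ExtensionZtoW}: a map $\gamma\colon\Cu(M_q)\to S$ whose restriction to $K_q$ is an order-preserving monoid morphism and whose restriction to $(0,\infty]$ is a generalized \CuMor{}, and which satisfies $\gamma(\sigma(r))\le\gamma(r)\le\gamma(r+\varepsilon)$ for every $r\in K_q$ and $\varepsilon>0$, is a generalized \CuMor{}; the proof is the interpolation argument of \cref{prp:ExtensionZtoW}, now using density of $K_q$ in $[0,\infty)$ (again via $q=q^2$, $q\neq1$), the only novelty being that $K_q$ is not singly generated, so the sandwich inequality must be imposed at every $r\in K_q$. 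Applying this to $\alpha_x$, one checks that $\alpha_x|_{(0,\infty]}$ is order-preserving (the set $\Phi_q(t,\varphi_1(x))$ grows with $t$), preserves suprema (its defining sets are compatible with suprema in $t$) and is additive (the super- and sub-additivity computations of the proof of \cref{prp:McDuffZ} carry over verbatim, keeping all denominators among the divisors of $q$), and that $\alpha_x(\sigma(r))\le\alpha_x(r)\le\alpha_x(r+\varepsilon)$, which follows from $q$-divisibility and $q$-unperforation just as the analogous inequality for $\alpha_x$ in the proof of \cref{prp:McDuffZ}.

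Finally, set $\alpha(x,s):=\alpha_x(s)$ and verify that $\alpha\colon S_1\times\Cu(M_q)\to T$ is a generalized $\Cu$-bimorphism: the second variable is $\alpha_x$, already handled; in the first variable, $x\mapsto\alpha(x,k/n)=\sup\{\varphi_2(y)\mid ny\le k\varphi_1(x)\}$ is a generalized \CuMor{} (order and suprema from this description, additivity from the exact formula $k\varphi_2(w)$), and $x\mapsto\alpha(x,t)$ for soft $t$ is dealt with by copying the order, suprema, super- and sub-additivity arguments in the proof of \cref{prp:McDuffZ}. Then \cite[Lemma~6.3.2,~Theorem~6.3.3]{AntPerThi18TensorProdCu} produces a generalized \CuMor{} $\gamma\colon S_1\otimes\Cu(M_q)\to T$ with $\gamma(x\otimes 1)=\alpha(x,1)=\varphi_2\varphi_1(x)$. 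When $\varphi_1$ and $\varphi_2$ are \CuMor{s}, one upgrades $\alpha$ to a $\Cu$-bimorphism by showing $\alpha(x',s')\ll\alpha(x,s)$ whenever $x'\ll x$ and $s'\ll s$ --- for $s',s\in K_q$ directly from $\varphi_1,\varphi_2$ preserving $\ll$, and for $s',s\in(0,\infty]$ by the sandwich in \cref{lma:AlmUnpfMap} (2) --- whence \cite[Theorem~6.3.3]{AntPerThi18TensorProdCu} gives that $\gamma$ is a \CuMor{}. I expect the main obstacle to be exactly the first two steps: verifying that the soft-part assignment is a well-defined generalized \CuMor{} and that it glues with the compact part into one on all of $\Cu(M_q)$. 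In contrast to \cref{prp:McDuffZ}, where $Z_c=\NN$ is singly generated, here one must carry the constraint that denominators divide $q$ through every estimate and establish the extension lemma over the non-finitely-generated monoid $K_q$; the remainder is bookkeeping parallel to the $\mathcal{Z}$-case.
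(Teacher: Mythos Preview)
Your proposal is correct and follows the same architecture as the paper's proof: define the map on $K_q$ via exact $q$-division (what the paper calls $\omega_n$), handle the soft part $(0,\infty]$ separately, glue the two pieces using a $\Cu(M_q)$-analogue of \cref{prp:ExtensionZtoW}, and apply the tensor-product universal property from \cite[Theorem~6.3.3]{AntPerThi18TensorProdCu}.

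The one substantive difference is on the soft part. You rebuild it from scratch with the set $\Phi_q$ whose denominators are constrained to divide $q$, and then rerun all the arguments of \cref{prp:SupZabso} and \cref{prp:McDuffZ} in this restricted setting. The paper instead takes a shortcut: since $q=q^2\neq 1$ forces some prime $p$ to satisfy $p^d\mid q$ for every $d$, one checks easily that $q$-divisibility implies almost divisibility and $q$-unperforation implies almost unperforation. Hence \cref{prp:McDuffZ} applies directly to $\varphi_1,\varphi_2$, and the paper simply imports the already-constructed bimorphism $\alpha\colon S_1\times Z\to T$ from there, using $\alpha|_{S_1\times(0,\infty]}$ as the soft part of $\alpha_q$. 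This spares one from redoing the supremum-existence, additivity, and $\ll$-preservation arguments. Your self-contained route works and makes the role of $q$ more visible, but it is longer; the paper's reuse of \cref{prp:McDuffZ} also means the sandwich condition in the extension lemma need only be checked at the generators $1/n$ for $n\mid q$ rather than at every $r\in K_q$.
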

\begin{proof}
We mimic the approach of \cite[Theorem~7.4.10]{AntPerThi18TensorProdCu}.

First, note that for every $x\in S$ and every $n$ divisor of $q$ there exists a unique element $y\in\varphi_2 (S_2)$ such that $y=\varphi_2 (z)$ with $\varphi_1 (x)=nz$. Indeed, existence follows from $q$-divisibility of $\varphi_1$, while uniqueness is given by $\varphi_2$. Thus, the map $\omega_n\colon S\to T$ given by $x\mapsto y$ is well-defined. It is readily checked that $\omega_n$ is a (generalized) \CuMor{} whenever $\varphi_1$ and $\varphi_2$ are.

Further, it follows from \cref{prp:McDuffZ} that there exists a (generalized) $\Cu$-bi\-mor\-phism $\alpha\colon S\times Z\to T$ such that $\alpha (x,1)=\varphi_2\varphi_1(x)$. Now, define a  (generalized) $\Cu$-bimorphism $\alpha_q\colon S\times (K_q\sqcup (0,\infty ])\to T$ as follows: Given $t\in (0,\infty]$, simply set $\alpha_q (x,t):=\alpha (x,t)$. Else, if $t=\frac{k}{n}$ for some (unique) coprime pair $k,n$ with $n$ a divisor of $q$, set $\alpha_q (x,t):=k\omega_n (x)$. By construction, $\alpha_q (\cdot ,t)$ is a generalized \CuMor{}.

A proof analoguous to that of \cref{prp:ExtensionZtoW} shows that $\alpha_q (x,\cdot )$ is a genera\-li\-zed \CuMor{} if and only if $\alpha_q (x,\cdot )|_{(0,\infty ]}$ is a generalized \CuMor{} and $\alpha_q (x,\sigma (\frac{1}{n}) )\leq \alpha_q (x,\frac{1}{n} )\leq \alpha_q (x,\sigma (\frac{1}{n}) +\varepsilon)$ for every $\varepsilon >0$ and any $n$ dividing $q$. Note that the first condition is satisfied by construction of $\alpha$, while the second condition is readily checked after a careful examination of $\alpha_q$. Thus, \cite[Theorem~6.3.3~(1)]{AntPerThi18TensorProdCu} implies the existence of the map $\gamma$ with the required properties.
\end{proof}

We know from \cite[Proposition~7.6.3]{AntPerThi18TensorProdCu} that $\Cu (A\otimes M_q)\cong \Cu (A)\otimes \Cu (M_q)$ always. Thus, one gets the following result, where \stHom{} and \CuMor{} can be changed to cpc. order-zero map and generalized \CuMor{} respectively.

\begin{thm}\label{thm:MainCuAMq}
 Let $\theta_1\colon A_1\to A_2$ and $\theta_2\colon A_2\to B$ be \stHom{s}. Assume that $\theta_1$ is $q$-divisible and that $\theta_2$ is $q$-unperforated. Then, there exists a \CuMor{} $\beta\colon\Cu (A\otimes M_q)\to\Cu (B)$ such that $\Cu (\theta_2\theta_1)[a]=\beta([a\otimes 1])$ for each $[a]\in\Cu (A)$.
\end{thm}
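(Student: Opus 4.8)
The plan is to mirror the structure of the proof of \cref{thm:MainCuAMq} itself once the Cuntz-semigroup level statement \cref{prp:McDuffMq} is in hand; in fact the theorem is essentially a direct translation of \cref{prp:McDuffMq} into the language of \ca{s}, so the main work has already been done. First I would set $S_1=\Cu(A_1)$, $S_2=\Cu(A_2)$, $T=\Cu(B)$, and $\varphi_i=\Cu(\theta_i)$. Since $\theta_1$ is $q$-divisible and $\theta_2$ is $q$-unperforated (in the sense of \cref{dfn:RatCuMor}, applied to the induced $\Cu$-morphisms), \cref{prp:McDuffMq} produces a \CuMor{} $\gamma\colon \Cu(A_1)\otimes\Cu(M_q)\to\Cu(B)$ with $\gamma(x\otimes 1)=\varphi_2\varphi_1(x)=\Cu(\theta_2\theta_1)(x)$ for every $x\in\Cu(A_1)$.

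Next I would invoke the identification $\Cu(A_1\otimes M_q)\cong\Cu(A_1)\otimes\Cu(M_q)$, which holds for all \ca{s} $A_1$ by \cite[Proposition~7.6.3]{AntPerThi18TensorProdCu}; crucially this is an honest isomorphism of $\Cu$-semigroups (not merely a surjection or a generalized morphism), so it carries $\gamma$ to a \CuMor{} $\beta\colon\Cu(A_1\otimes M_q)\to\Cu(B)$. Under this isomorphism the element $x\otimes 1\in\Cu(A_1)\otimes\Cu(M_q)$ corresponds to the class $[a\otimes 1]$ for $x=[a]$, where on the right $1\in M_q$ is the unit and the class is taken in $\Cu(A_1\otimes M_q)$; this is exactly how the isomorphism of \cite[Proposition~7.6.3]{AntPerThi18TensorProdCu} is normalized. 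Hence $\beta([a\otimes 1])=\gamma([a]\otimes 1)=\Cu(\theta_2\theta_1)([a])$ for all $[a]\in\Cu(A_1)$, which is the asserted commutativity. The variant for cpc.\ order-zero maps and generalized \CuMor{s} follows identically, using the generalized-morphism clause of \cref{prp:McDuffMq}, since a cpc.\ order-zero map induces only a generalized \CuMor{}.

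There is essentially no obstacle here beyond bookkeeping: all the real content sits in \cref{prp:McDuffMq} (which in turn rests on \cref{prp:McDuffZ} and the extension argument of \cref{prp:ExtensionZtoW}) and in the tensor-product identification of \cite{AntPerThi18TensorProdCu}. The one point that deserves a line of care is checking that the abstract isomorphism $\Cu(A_1\otimes M_q)\cong\Cu(A_1)\otimes\Cu(M_q)$ really does send simple tensors of classes to classes of simple tensors --- i.e.\ that $\beta$ is defined on the concrete generators $[a\otimes 1]$ in the way the statement requires --- but this is built into the construction of the isomorphism and is routine. The contrast with \cref{thm:MainCuA} worth flagging (as the surrounding text already does) is that there the analogous identification $\Cu(A_1)\otimes\Cu(\mathcal Z)\cong\Cu(A_1\otimes\mathcal Z)$ can fail, which is precisely why the $M_q$ (and $\mathcal W$) cases yield clean \ca-level statements while the $\mathcal Z$-case does not.
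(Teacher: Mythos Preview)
Your proposal is correct and follows exactly the paper's own approach: the paper states the theorem as an immediate consequence of \cref{prp:McDuffMq} together with the identification $\Cu(A_1\otimes M_q)\cong\Cu(A_1)\otimes\Cu(M_q)$ from \cite[Proposition~7.6.3]{AntPerThi18TensorProdCu}, which is precisely the two-step argument you give. Your additional remarks on the normalization of the isomorphism and the cpc.\ order-zero variant are accurate elaborations of points the paper leaves implicit.
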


Applying Robert's classification result (\cite[Theorem~1.0.1]{Rob12Class}), one obtains:

\begin{cor}\label{cor:RobMq}
Retain the above assumptions. Assume that $A_1$ is a unital \ca{} stably isomorphic to an  inductive limit of $1$-dimensional NCCW-complexes with trivial $K_1$-group, that $B$ is unital and of stable rank one, and that $\theta_1$ and $\theta_2$ are unital. Then, $\theta_2\theta_1$ factors, up to approximate unitary equivalence, through $A\otimes M_q$.
\end{cor}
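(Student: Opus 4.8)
The plan is to deduce the statement from \cref{thm:MainCuAMq} together with Robert's classification of \stHom{s} out of (stabilizations of) inductive limits of $1$-dimensional NCCW-complexes with trivial $K_1$-group \cite[Theorem~1.0.1]{Rob12Class}, in close analogy with the proof of \cref{prp:AFPureFactor}. First I would apply \cref{thm:MainCuAMq}: since $\theta_1$ is $q$-divisible, $\theta_2$ is $q$-unperforated, and both are \stHom{s}, there is a \CuMor{} $\beta\colon\Cu(A_1\otimes M_q)\to\Cu(B)$ with $\beta([a\otimes 1])=\Cu(\theta_2\theta_1)([a])$ for all $[a]\in\Cu(A_1)$; recall that this statement already incorporates the identification $\Cu(A_1\otimes M_q)\cong\Cu(A_1)\otimes\Cu(M_q)$ from \cite[Proposition~7.6.3]{AntPerThi18TensorProdCu}. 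Since $\theta_1$ and $\theta_2$ are unital, $\beta$ maps the class of $1_{A_1\otimes M_q}=1_{A_1}\otimes 1_{M_q}$ to $[\theta_2\theta_1(1_{A_1})]=[1_B]$.

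Next I would observe that $A_1\otimes M_q$ again belongs to the domain class of \cite[Theorem~1.0.1]{Rob12Class}: if $A_1$ is stably isomorphic to $\varinjlim_i X_i$ with each $X_i$ a $1$-dimensional NCCW-complex with $K_1(X_i)=0$, and $M_q=\varinjlim_k M_{d_k}$, then $A_1\otimes M_q$ is stably isomorphic to $\varinjlim_{i,k}(X_i\otimes M_{d_k})$, and each $X_i\otimes M_{d_k}$ is again a $1$-dimensional NCCW-complex with trivial $K_1$-group, since this class is closed under matrix amplification. As $B$ has stable rank one, \cite[Theorem~1.0.1]{Rob12Class} applies: the \CuMor{} $\beta$ is induced by a \stHom{} $\phi\colon A_1\otimes M_q\to B$, which may be taken unital because $\beta$ preserves the class of the unit (equivalently, $[\phi(1)]=[1_B]$ in $\Cu(B)$ forces $\phi(1)=1_B$, as $B$ is stably finite).

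Finally, writing $\iota\colon A_1\to A_1\otimes M_q$ for the first-factor embedding $a\mapsto a\otimes 1_{M_q}$, one has $\Cu(\phi\circ\iota)=\beta\circ(-\otimes 1)=\Cu(\theta_2\theta_1)$. Thus $\phi\circ\iota$ and $\theta_2\theta_1$ are \stHom{s} from $A_1$ (again in Robert's domain class) to the stable rank one \ca{} $B$ inducing the same \CuMor{}, so by the uniqueness part of \cite[Theorem~1.0.1]{Rob12Class} they are approximately unitarily equivalent; since $\phi\circ\iota$ factors through $A_1\otimes M_q$ by construction, so does $\theta_2\theta_1$ up to approximate unitary equivalence. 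I expect the only slightly delicate point to be checking that $A_1\otimes M_q$ remains within Robert's domain class, and in particular handling the ``stably isomorphic to'' clause, but this is routine; the rest is a direct combination of \cref{thm:MainCuAMq} with \cite[Theorem~1.0.1]{Rob12Class}.
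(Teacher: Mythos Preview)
Your proposal is correct and follows exactly the approach the paper intends: the corollary is stated without proof, simply as a consequence of \cref{thm:MainCuAMq} together with Robert's \cite[Theorem~1.0.1]{Rob12Class}, and your argument mirrors the analogous proof of \cref{prp:AFPureFactor}. You have in fact supplied more detail than the paper does, including the verification that $A_1\otimes M_q$ stays in Robert's domain class and the use of the uniqueness clause; no changes are needed.
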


%==========================================================================================
\subsection{Soft maps}

Recall that the Cuntz semigroup of the Jacelon-Razak algebra $\mathcal{W}$ is isomorphic to the monoid $[0,\infty ]$. As shown in \cite[Theorem~7.5.4]{AntPerThi18TensorProdCu}, every element in a pure \CuSgp{} $S$ is strongly soft (see \cref{pgr:Soft} below) if and only if $S\cong S\otimes \Cu (\mathcal{W})$. We show the analogue of \cref{prp:McDuffZ} in \cref{prp:McDuffW}.

%==========================================================================================
\begin{pgr}[Soft elements and strongly soft Cuntz classes]\label{pgr:Soft}
Let $S$ be a \CuSgp{}, and let $x\in S$. Recall from \cite[Definition~4.2]{ThiVil23Soft} that $x$ is \emph{strongly soft} if, for any $x'\in S$ such that $x'\ll x$, there exists $t\in S$ such that $x'+t\leq x\leq \infty t$. Making an abuse of notation, the set of strongly soft elements is usually denoted by $S_{\soft}$.

Let $A$ be a stable \ca{}. Under the presence of sufficient non-commutativity on $A$ (for example, if $A$ has the Global Glimm Property), a positive element $a\in A_+$ has a strongly soft Cuntz class if and only if $a$ is Cuntz equivalent to a \emph{soft element}, that is, an element $b\in A_+$ such that no nontrivial quotient of $\overline{bAb}$ is unital; this equivalence is proved in \cite{AsVaThiVil23Soft}.
\end{pgr}

%==========================================================================================
\begin{dfn}\label{dfn:WMult}
 Let $\varphi\colon S\to T$ be a generalized \CuMor{}. We will say that $\varphi$ is \emph{soft} if $\varphi (S)\subseteq T_\soft$.

Further, $\varphi$ will be said to have \emph{$\Cu (\mathcal{W})$-multiplication} if it is soft and has $\Cu (\mathcal{Z})$-multiplication.
\end{dfn}

\begin{rmk}$ $
\begin{enumerate}
\item Let $\theta\colon A\to B$ be a \stHom{} between stable \ca{s}, and assume that $B$ satisfies the Global Glimm Property. Then, $\Cu(\theta )$ has $\Cu (\mathcal{W})$-multiplication if and only if $\theta$ is pure and $\theta (a)$ is Cuntz equivalent to a soft element for every $a\in A_+$.
\item Note the analogue statements of \cref{prp:LSkMult} and Propositions \ref{prp:idMult}, \ref{prp:CompMult} and \ref{prp:OinftyMultGabe} also work with $\Cu (\mathcal{W})$-multiplication instead of $\Cu (\mathcal{Z})$-multi\-pli\-ca\-tion.
\end{enumerate}
\end{rmk}

%==========================================================================================
\begin{exa}
By (the analogue of) \cref{prp:CompMult}, any \stHom{} $A\to B$ that factorizes through $A\otimes \mathcal{W}$ induces a \CuMor{}  with $\Cu (\mathcal{W})$-multiplication.

As noted in \cite[Remark~3.21]{Gabe2020ANewProofKirchberg}, the infinite repeat $\phi\otimes 1_{\mathcal{M}(\mathcal{K})}\colon A\to\mathcal{M}(B\otimes\mathcal{K})$ of any \stHom{} of the form $A\to \mathcal{M}(B)$ factorizes through $\mathcal{M}(B)\otimes \mathcal{O}_2$.

Since every $\mathcal{O}_2$-stable \ca{} is purely infinite by \cite[Theorem~5.11]{KirRor00PureInf}, its Cuntz semigroup has $\{ 0,\infty \}$-multiplication and, also, $[0,\infty ]$-multiplication. Thus, it follows that $\Cu (\phi\otimes 1_{\mathcal{M}(\mathcal{K})})$ always has $\Cu (\mathcal{W})$-multiplication regardless of our choice of $A$ and $B$.
\end{exa}

%==========================================================================================

\begin{thm}\label{prp:McDuffW}
Let $\varphi_1\colon S_1\to S_2$ and $\varphi_2\colon S_2\to T$ be (generalized) \CuMor{s}. Assume that $\varphi_1$ is soft and almost divisible, and that $\varphi_2$ is almost unperforated. Then, there exists a (generalized) \CuMor{} $\gamma\colon S_1\otimes [0,\infty ]\to T$ such that ${\gamma (x\otimes 1)=\varphi_2\varphi_1 (x)}$.
\end{thm}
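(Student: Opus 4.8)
The plan is to reuse the construction in the proof of \cref{prp:McDuffZ} and to observe that, when $\varphi_1$ is soft, the compact part of $Z=\Cu(\mathcal Z)$ becomes redundant, so that the induced bimorphism descends to the soft part $Z_\soft=[0,\infty]\cong\Cu(\mathcal W)$. Concretely, recall from that proof the generalized $\Cu$-bimorphism (resp.\ $\Cu$-bimorphism, when $\varphi_1,\varphi_2$ are \CuMor{s}) $\alpha\colon S_1\times Z\to T$, $\alpha(x,t)=\alpha_x(t)$, with $\alpha_x(t)=\sup\Phi(t,\varphi_1(x))$ for $t\in(0,\infty]$. Restricting the second variable to the sub-\CuSgp{} $Z_\soft=[0,\infty]\cong\Cu(\mathcal W)$ yields a map $\alpha^{\mathcal W}\colon S_1\times\Cu(\mathcal W)\to T$, $\alpha^{\mathcal W}(x,t)=\alpha_x(t)$, which is again a generalized $\Cu$-bimorphism (resp.\ $\Cu$-bimorphism): the components $\alpha^{\mathcal W}(\cdot,t)=\gamma_t$ and $\alpha^{\mathcal W}(x,\cdot)=\alpha_x|_{Z_\soft}$ are precisely the maps shown there to be generalized \CuMor{s} (resp.\ \CuMor{s}), and the $\ll$-preservation in the \CuMor{} case is inherited from the verification in \cref{prp:McDuffZ} (the case in which both $Z$-arguments lie in $(0,\infty]$). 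Note that here $\alpha^{\mathcal W}(x,1)=\alpha_x(\sigma(1))=\sup\Phi(1,\varphi_1(x))$.

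The heart of the argument — and where I expect the only real difficulty to lie — is to prove that $\alpha_x(\sigma(1))=\varphi_2\varphi_1(x)$ for every $x\in S_1$; this is exactly the point at which softness of $\varphi_1$ is essential (for $\varphi_1$ non-soft, e.g.\ with $\varphi_1(x)$ compact, the identity fails). The inequality $\alpha_x(\sigma(1))=\sup\Phi(1,\varphi_1(x))\le\varphi_2\varphi_1(x)$ is already contained in \cref{prp:SupZabso} (take $t=\sigma(1)$, so $\lceil t\rceil=1$). For the reverse inequality I would first isolate a purely order-theoretic fact: if $S$ is a \CuSgp{}, $w\in S$ is strongly soft, and $w^\circ\ll w$, then there exist $k<n$ in $\NN$ and $y\in S$ with $ny\le kw$ and $w^\circ\le y$. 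To see this, interpolate $w^\circ\ll w^{\circ\circ}\ll w$ using \axiomO{2}, use strong softness (\cref{pgr:Soft}) to obtain $t\in S$ with $w^{\circ\circ}+t\le w\le\infty t$, pick $m\in\NN$ with $w^{\circ\circ}\le mt$ (possible since $w^{\circ\circ}\ll w\le\infty t=\sup_m mt$), and note $(m+1)w^{\circ\circ}=mw^{\circ\circ}+w^{\circ\circ}\le mw^{\circ\circ}+mt=m(w^{\circ\circ}+t)\le mw$; then $y=w^{\circ\circ}$, $n=m+1$, $k=m$ work. Applying this with $w=\varphi_1(x)$, which is strongly soft because $\varphi_1$ is soft, for each $w^\circ\ll\varphi_1(x)$ in $S_2$ one gets such $y$ with $k/n<1$, whence $\varphi_2(y)\in\Phi(1,\varphi_1(x))$ and $\varphi_2(w^\circ)\le\varphi_2(y)\le\sup\Phi(1,\varphi_1(x))=\alpha_x(\sigma(1))$; taking the supremum over $w^\circ\ll\varphi_1(x)$ and using that $\varphi_2$ preserves suprema of increasing sequences gives $\varphi_2\varphi_1(x)\le\alpha_x(\sigma(1))$, as needed. (Note this uses only that $\varphi_1(x)$ is strongly soft and \axiomO{2} in $S_2$, so no $\ll$-preservation of $\varphi_1$ is required, which is what makes it work equally in the generalized case.)

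Finally, I would apply the universal property of the Cuntz tensor product \cite[Theorem~6.3.3]{AntPerThi18TensorProdCu}: the generalized $\Cu$-bimorphism (resp.\ $\Cu$-bimorphism) $\alpha^{\mathcal W}\colon S_1\times\Cu(\mathcal W)\to T$ factors through a generalized \CuMor{} (resp.\ \CuMor{}) $\gamma\colon S_1\otimes\Cu(\mathcal W)\to T$ with $\gamma(x\otimes t)=\alpha_x(t)$; in particular $\gamma(x\otimes 1)=\alpha_x(\sigma(1))=\varphi_2\varphi_1(x)$ by the previous paragraph. Everything outside the strong-softness claim and its use in the reverse inequality is bookkeeping built on \cref{prp:McDuffZ}, so that single step is the one I would expect to require care.
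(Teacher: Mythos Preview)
Your proposal is correct and follows essentially the same route as the paper: both restrict the construction of \cref{prp:McDuffZ} to the soft part $Z_\soft=[0,\infty]$ and use strong softness of $\varphi_1(x)$ to obtain, for each approximant below $\varphi_1(x)$, an inequality of the form $(m+1)w'\leq m\varphi_1(x)$ placing $w'$ in $\Phi(1,\varphi_1(x))$, thereby upgrading $\alpha_x(\sigma(1))\leq\varphi_2\varphi_1(x)$ to an equality. The only cosmetic difference is that the paper restricts the already-constructed $\beta$ to $(S_1\otimes Z)_\soft\cong S_1\otimes[0,\infty]$ and cites \cite[Proposition~4.5]{ThiVil23Soft} for the softness inequality (phrasing it with $x'\ll x$ in $S_1$), whereas you restrict the bimorphism and reapply the universal property, and you work directly with $w^\circ\ll\varphi_1(x)$ in $S_2$ and unpack the softness argument from the definition---your version is thus slightly more self-contained and a touch more careful in the generalized case.
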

\begin{proof}
Let $\beta\colon S_1\otimes Z\to T$ be the map constructed in \cref{prp:McDuffZ}, and denote by $\gamma$ the restriction of $\beta$ to $(S\otimes Z)_\soft\cong S\otimes [0,\infty ]$.

Let $x\in S_1$ and take $x'\in S_1$ such that $x'\ll x$. Then, since $\varphi_1 (x)$ is soft, there exists $n\in\NN$ such that $(n+1)\varphi_1 (x')\leq n\varphi_1 (x)$; see, for example, \cite[Proposition~4.5]{ThiVil23Soft}. This implies that $\varphi_1 (x')\in\mu ((n,n+1),0,\varphi_1 (x))$ and, in particular, that $\varphi_2\varphi_1 (x')\in \Phi (1,\varphi_1 (x))$. Thus, one has $\varphi_2 \varphi_1 (x')\leq \gamma (x\otimes 1)$ for every $x'$. Consequently, $\varphi_2\varphi_1 (x)\leq \gamma (x\otimes 1)$. Further, note that one gets
\[
\gamma (x\otimes 1)=\beta (x\otimes 1_{Z_\soft })\leq \beta (x\otimes 1_Z)=\varphi_2\varphi_1 (x)
,
\]
that is, $\gamma (x\otimes 1)=\varphi_2 \varphi_1 (x)$, as desired.
\end{proof}

%==========================================================================================
Combining \cite[Theorem~5.1.2]{Rob13b} and \cite[Proposition~7.6.3]{AntPerThi18TensorProdCu}, one has that $\Cu (A\otimes \mathcal{W})\cong \Cu (A)\otimes [0,\infty]$; hence, we obtain the following result.

\begin{thm}\label{thm:MainCuAW}
Let $\theta_1\colon A_1\to A_2$ be a soft and pure \stHom{}, and let $\theta_2\colon A_2\to B$ be a pure \stHom{}. Then, there exists a \CuMor{} $\beta\colon\Cu (A\otimes \mathcal{W})\to\Cu (B)$ such that $\Cu (\theta_2\theta_1)[a]=\beta([a\otimes 1])$ for each $[a]\in\Cu (A)$.
\end{thm}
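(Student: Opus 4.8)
The plan is to deduce this directly from its abstract Cuntz-semigroup counterpart, \cref{prp:McDuffW}, combined with the identification of $\Cu(A_1\otimes\mathcal{W})$ with a $\Cu$-tensor product; in particular almost all of the content is already contained in \cref{prp:McDuffW}, and the rest is bookkeeping. (Throughout I write $A_1$ for the algebra denoted $A$ in the statement.) First I would translate the hypotheses to the level of $\Cu$: set $\varphi_1:=\Cu(\theta_1)\colon\Cu(A_1)\to\Cu(A_2)$ and $\varphi_2:=\Cu(\theta_2)\colon\Cu(A_2)\to\Cu(B)$. Since $\theta_1$ is pure, $\varphi_1$ is almost divisible and almost unperforated by \cref{dfn:ZMultPure}, and since $\theta_1$ is moreover soft, $\varphi_1$ is soft in the sense of \cref{dfn:WMult}; likewise, pureness of $\theta_2$ gives that $\varphi_2$ is almost unperforated. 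Thus $\varphi_1$ and $\varphi_2$ satisfy the hypotheses of \cref{prp:McDuffW}, which produces a \CuMor{} $\gamma\colon\Cu(A_1)\otimes[0,\infty]\to\Cu(B)$ with $\gamma(x\otimes1)=\varphi_2\varphi_1(x)=\Cu(\theta_2\theta_1)(x)$ for all $x\in\Cu(A_1)$ (and with $\gamma$ only a generalized \CuMor{} if $\theta_1,\theta_2$ are merely cpc.\ order-zero).

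Next I would invoke the identification $\Cu(A_1\otimes\mathcal{W})\cong\Cu(A_1)\otimes\Cu(\mathcal{W})\cong\Cu(A_1)\otimes[0,\infty]$, which follows by combining \cite[Theorem~5.1.2]{Rob13b} with \cite[Proposition~7.6.3]{AntPerThi18TensorProdCu}; write $\Theta\colon\Cu(A_1\otimes\mathcal{W})\to\Cu(A_1)\otimes[0,\infty]$ for the resulting $\Cu$-isomorphism, under which $[a\otimes1]$ corresponds to $[a]\otimes1$, where $1\in[0,\infty]\cong\Cu(\mathcal{W})$ is the canonical element (read via a strictly positive element of $\mathcal{W}$, as $\mathcal{W}$ is non-unital). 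Then $\beta:=\gamma\circ\Theta$ is a \CuMor{} $\Cu(A_1\otimes\mathcal{W})\to\Cu(B)$, and for $[a]\in\Cu(A_1)$ one has $\beta([a\otimes1])=\gamma([a]\otimes1)=\Cu(\theta_2\theta_1)[a]$, as required.

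The only step needing care is a matter of conventions rather than a genuine obstacle: one must check that the distinguished element ``$1$'' occurring in $[a\otimes1]$, in the ``$x\otimes1$'' of \cref{prp:McDuffW}, and in the isomorphism $\Theta$ all name the same element of $\Cu(\mathcal{W})\cong[0,\infty]$, and that $\Theta$ intertwines the two copies of the embedding $-\otimes1$. Once this is pinned down, nothing further is needed, and the statement for cpc.\ order-zero maps follows verbatim upon replacing every \CuMor{} by a generalized \CuMor{}.
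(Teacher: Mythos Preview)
Your proposal is correct and follows exactly the same approach as the paper: apply \cref{prp:McDuffW} to $\varphi_i=\Cu(\theta_i)$ and then compose with the isomorphism $\Cu(A_1\otimes\mathcal{W})\cong\Cu(A_1)\otimes[0,\infty]$ obtained from \cite[Theorem~5.1.2]{Rob13b} and \cite[Proposition~7.6.3]{AntPerThi18TensorProdCu}. The paper records this in a single sentence preceding the theorem, while you have (appropriately) spelled out the bookkeeping.
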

\providecommand{\etalchar}[1]{$^{#1}$}
\providecommand{\bysame}{\leavevmode\hbox to3em{\hrulefill}\thinspace}
\providecommand{\noopsort}[1]{}
\providecommand{\mr}[1]{\href{http://www.ams.org/mathscinet-getitem?mr=#1}{MR~#1}}
\providecommand{\zbl}[1]{\href{http://www.zentralblatt-math.org/zmath/en/search/?q=an:#1}{Zbl~#1}}
\providecommand{\jfm}[1]{\href{http://www.emis.de/cgi-bin/JFM-item?#1}{JFM~#1}}
\providecommand{\arxiv}[1]{\href{http://www.arxiv.org/abs/#1}{arXiv~#1}}
\providecommand{\doi}[1]{\url{http://dx.doi.org/#1}}
\providecommand{\MR}{\relax\ifhmode\unskip\space\fi MR }
% \MRhref is called by the amsart/book/proc definition of \MR.
\providecommand{\MRhref}[2]{%
  \href{http://www.ams.org/mathscinet-getitem?mr=#1}{#2}
}
\providecommand{\href}[2]{#2}

\end{document}